\documentclass[12pt,draft]{article}

\usepackage{graphicx}

\usepackage{bbm}

\usepackage{amsmath}
\usepackage{amsthm}
\usepackage{amsfonts}
\usepackage{amssymb}
\usepackage{xcolor}
\usepackage{comment}

\evensidemargin0cm \oddsidemargin0cm \textwidth16cm
\textheight23cm \topmargin-2cm

\newcommand{\eee}{{\rm e}}

\newcommand{\me}{\mathbb{E}}
\newcommand{\mn}{\mathbb{N}}
\newcommand{\mr}{\mathbb{R}}

\DeclareMathOperator{\1}{\mathbbm{1}}

\newcommand{\var}{{\rm Var \,}}
\newcommand{\cov}{{\rm Cov \,}}

\newcommand{\mmp}{\mathbb{P}}

\newtheorem{thm}{Theorem}[section]
\newtheorem{lemma}[thm]{Lemma}

\newtheorem{assertion}[thm]{Proposition}
\theoremstyle{definition}

\theoremstyle{remark}
\newtheorem{rem}[thm]{Remark}

\begin{document}
\title{A law of the iterated logarithm for
small counts in Karlin's occupancy scheme }\date{}
\author{Alexander Iksanov\footnote{Faculty of Computer Science and Cybernetics, Taras Shevchenko National University of Kyiv, Ukraine; e-mail address:
iksan@univ.kiev.ua} \ \ and \ \ Valeriya Kotelnikova\footnote{Faculty of Computer Science and Cybernetics, Taras Shevchenko National University of Kyiv, Ukraine; e-mail address: valeria.kotelnikova@unicyb.kiev.ua}}
\maketitle

\begin{abstract}
In the Karlin infinite occupancy scheme, balls are thrown independently into an infinite array of boxes $1$, $2,\ldots$, with probability $p_k$ of hitting the box $k$.
For $j,n\in\mathbb{N}$, denote by $\mathcal{K}^*_j(n)$ the number of boxes containing exactly $j$ balls provided that $n$ balls have been thrown. We call {\it small counts} the variables $\mathcal{K}^*_j(n)$, with $j$ fixed.
Our main result is a law of the iterated logarithm (LIL) for the small counts as the number of balls thrown becomes large. Its proof exploits a Poissonization technique and is based on a new LIL for infinite sums of independent indicators $\sum_{k\geq 1}\1_{A_k(t)}$ as $t\to\infty$, where the family of events $(A_k(t))_{t\geq 0}$ is not necessarily monotone in $t$. The latter LIL is an extension of a LIL obtained recently by  Buraczewski, Iksanov and Kotelnikova (2023+) in the situation that $(A_k(t))_{t\geq 0}$ forms a nondecreasing family of events.
\end{abstract}

\noindent Key words: independent indicators; infinite occupancy; law of the iterated logarithm; small counts

\noindent 2020 Mathematics Subject Classification: Primary:
60F15, 60G50
\hphantom{2020 Mathematics Subject Classification: } Secondary: 60C05

\section{Introduction}

\subsection{Definition of the model}  \label{karlin}

Let $(p_k)_{k\in\mn}$ be a discrete probability distribution, with $p_k>0$ for infinitely many $k$. The {\it infinite occupancy scheme} is defined by independent allocation of balls over an infinite array boxes $1$, $2,\ldots$, with probability $p_k$ of hitting the box $k$. The scheme is usually called the {\it Karlin occupancy scheme} because of Karlin's seminal work \cite{Karlin:1967}. A survey of the literature on the infinite occupancy up to 2007 is given in \cite{Gnedin+Hansen+Pitman:2007}. An incomplete list of very recent contributions includes \cite{Blasi+Mena+Prunster:2022, Buraczewski+Iksanov+Kotelnikova:2023+, Chang+Grabchak:2023, Derbazi+Gnedin+Marynych:2023}. Among other things, the authors of \cite{Gnedin+Hansen+Pitman:2007} discuss applications of the scheme to ecology, database query optimization and literature. Another portion of possible applications can be found in Section 1.1 of \cite{Grabchak+Kelbert+Paris:2020}.

There are deterministic and Poissonized versions of Karlin's occupancy scheme. In a {\it deterministic version}
the $n$th ball is thrown at time~$n\in\mn$.
For $j,n\in\mn$, denote by $\mathcal{K}_j(n)$ and $\mathcal{K}_j^\ast(n)$ the number of boxes hit by at least $j$ balls and exactly $j$ balls, respectively, up to and including time $n$.
Observe that $\mathcal{K}_1(n)$ is the number of occupied boxes at time $n$. Sometimes the variables $\mathcal{K}_j^\ast(n)$, with $j$ fixed, are referred to as {\it small counts}.

To define the other version of the scheme we need an additional notation.
Let $(S_k)_{k\in\mn}$ denote a random walk with independent jumps having an exponential distribution of unit mean. The counting process $\pi:=(\pi(t))_{t\ge 0}$ given by
$\pi(t):=\#\{k\in\mn: S_k\le t\}$ for $t\geq 0$
is a Poisson process on $[0,\infty)$ of unit intensity.

In a {\it Poissonized version} of Karlin's occupancy scheme the $n$th ball is thrown at time~$S_n$, $n\in\mn$, and it is assumed that the allocation process is independent of $(S_k)_{k\in\mn}$, hence of $\pi$. Thus, in the time interval $[0,t]$ there are $\pi(t)$ balls thrown in the Poissonized version and $\lfloor t\rfloor$ balls thrown in the deterministic version.
While the occupancy counts of distinct boxes are dependent in the deterministic version, these are independent in the Poissonized version. The latter fact is a principal advantage of the Poissonized version. It is justified by the thinning property of Poisson processes.
For $j\in\mn$ and $t\ge 0$, denote by
$K_j(t)$ and $K_j^*(t)$ the number of boxes containing at least $j$ balls and exactly $j$ balls, respectively, in the Poissonized scheme at time $t$.
The random variables $$K_j(t)=\sum_{k\geq 1}\1_{\{\text{the box}~ k~\text{contains at least}~ j~\text{balls}~\text{at time}~t\}}$$ and
\begin{equation}\label{eq:kjast}
	K_j^*(t)=\sum_{k\geq 1}\1_{\{\text{the box}~ k~\text{contains exactly}~ j~\text{balls}~\text{at time}~t\}}
\end{equation}
are the infinite sums of independent indicators. As a consequence, their analysis is much simpler than that of $\mathcal{K}_j(n)$ and $\mathcal{K}_j^\ast(n)$ which are infinite sums of dependent indicators.

\subsection{Main results}

Put $$\rho(t):=\#\{k\in\mn: 1/p_k\le t\},\quad t>0$$ and note that $\rho(t)=0$ for $t\in(0,1]$. Following Karlin \cite{Karlin:1967} we assume that $\rho$ varies regularly at $\infty$ of index $\alpha\in [0,1]$, that is, $\rho(t)\sim t^\alpha L(t)$ as $t\to\infty$ for some $L$ slowly varying at $\infty$. An encyclopaedic treatment of slowly and regularly varying functions can be found in Section 1 of \cite{Bingham+Goldie+Teugels:1989}.

The function $\rho$ is said to belong to the {\it de Haan class} $\Pi$ if,
for all $\lambda>0$,
\begin{equation}\label{eq:deHaan}
	\lim_{t\to\infty}\frac{\rho(\lambda t)-\rho(t)}{\ell(t)}=\log \lambda
\end{equation}
for some $\ell$ slowly varying at $\infty$. The function $\ell$ is called {\it auxiliary}. According to Theorem~3.7.4 in \cite{Bingham+Goldie+Teugels:1989}, the class $\Pi$ is a subclass of the class of slowly varying functions. Further detailed information regarding the class $\Pi$ is given in Section~3 of \cite{Bingham+Goldie+Teugels:1989} and in \cite{Geluk+deHaan:1987}. Denote by $\Pi_{\ell,\,\infty}$ the subclass of the de Haan class $\Pi$ with the auxiliary functions $\ell$ satisfying $\lim_{t\to\infty}\ell(t)=\infty$.

In the case $\alpha\in (0,1]$, according to Theorems 3, 5 and 5' in \cite{Karlin:1967}, both $K_j^*(t)$ and $\mathcal{K}_j^\ast(n)$, centered by their means and normalized by their standard deviations, converge in distribution to a random variable with the standard normal distribution. In the case $\rho\in \Pi_{\ell,\,\infty}$, Corollary 1.6 in \cite{Iksanov+Kotelnikova:2022} provides functional central limit theorems for $K_j^*(t)$ and $\mathcal{K}_j^\ast(n)$, properly scaled. Our purpose is to prove laws of the iterated logarithm (LILs) for $K_j^*(t)$ as $t\to\infty$ and $\mathcal{K}_j^\ast(n)$ as $n\to\infty$. While doing so, we treat the three cases separately: $\alpha=0$, $\alpha\in (0,1)$ and $\alpha=1$. The reason is that the forms of the LILs are slightly or essentially different in these cases. If $\rho$ is slowly varying at $\infty$ and satisfies an additional assumption, then the actual limit relation is either a law of the single logarithm or a LIL. However, to keep the presentation simple we prefer to call LILs all the limit relations involving upper or lower limits which appear in the paper.

In Theorems \ref{thm:Karlin0}, \ref{thm:Karlin} and \ref{thm:Karlin1} we present LILs for the Poissonized variables $K_j^*(t)$ as $t\to\infty$. Theorem \ref{thm:Karlin0} covers a subcase of the case $\alpha=0$ in which $\rho\in\Pi_{\ell,\,\infty}$ with particular~$\ell$.

\begin{thm}\label{thm:Karlin0}
	Assume that \eqref{eq:deHaan} holds. If $\ell$ in \eqref{eq:deHaan} satisfies
	\begin{equation}\label{eq:slowly}
		\ell(t)~\sim~ (\log t)^\beta l(\log t),\quad t\to\infty
	\end{equation}
	for some $\beta>0$ and $l$ slowly varying at $\infty$, then, for each $j\in\mn$,
	\begin{equation}\label{eq:LILkar}
		\limsup_{t\to\infty}\frac{K^*_j(t)-\me K^*_j(t)}{({\rm Var}\,K^*_j(t)\log {\rm Var}\,K^*_j(t))^{1/2}}=\Big(\frac{2}{\beta}\Big)^{1/2}\quad\text{{\rm a.s.}}
	\end{equation}
	and
	\begin{equation}\label{eq:inf01}
		\liminf_{t\to\infty}\frac{K^*_j(t)-\me K^*_j(t)}{({\rm Var}\,K^*_j(t)\log {\rm Var}\,K^*_j(t))^{1/2}}=-\Big(\frac{2}{\beta}\Big)^{1/2}\quad\text{{\rm a.s.}}
	\end{equation}
	If $\ell$ in \eqref{eq:deHaan} satisfies
	\begin{equation}\label{eq:slowly2}
		\ell(t)~\sim~\exp(\sigma(\log t)^\lambda),\quad t\to\infty
	\end{equation}
	for some $\sigma>0$ and $\lambda\in (0,1)$, then, for each $j\in\mn$,
	\begin{equation}\label{eq:LILkar1}
		\limsup_{t\to\infty}\frac{K^*_j(t)-\me K^*_j(t)}{({\rm Var}\,K^*_j(t)\log\log {\rm Var}\,K^*_j(t))^{1/2}}=\Big(\frac{2}{\lambda}\Big)^{1/2}\quad\text{{\rm a.s.}}
	\end{equation}
	and
	\begin{equation}\label{eq:inf02}
		\liminf_{t\to\infty}\frac{K^*_j(t)-\me K^*_j(t)}{({\rm Var}\,K^*_j(t)\log\log {\rm Var}\,K^*_j(t))^{1/2}}=-\Big(\frac{2}{\lambda}\Big)^{1/2}\quad\text{{\rm a.s.}}
	\end{equation}
	In both cases
	\begin{equation}\label{eq:meK0}
		\me K^*_j(t)~\sim~ \frac{\ell(t)}{j},\quad t\to\infty
	\end{equation}
	and
	\begin{equation}\label{eq:varK0_1}
		{\rm Var}\,K^*_j(t)~\sim~ \Big(\frac{1}{j}- \frac{(2j-1)!}{(j!)^2 2^{2j}}\Big)\ell(t),\quad t\to\infty.
	\end{equation}
\end{thm}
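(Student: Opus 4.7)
The plan is to apply the extended LIL for infinite sums of independent (non-monotone) indicators announced in the abstract to the representation \eqref{eq:kjast}. In the Poissonized scheme the number of balls in box $k$ at time $t$ is Poisson distributed with mean $p_k t$, independently across $k$, hence $\mmp(A_k(t))=\eee^{-p_k t}(p_k t)^j/j!$ with $A_k(t):=\{\text{box }k\text{ contains exactly }j\text{ balls at time }t\}$, and the family of indicators $(\1_{A_k(t)})_{k\ge 1}$ is independent. Because $t\mapsto\mmp(A_k(t))$ is unimodal for any fixed $j\ge 1$, the family $(A_k(t))_{t\ge 0}$ is \emph{not} monotone, which rules out a direct application of the Buraczewski--Iksanov--Kotelnikova LIL and forces the use of its new non-monotone extension.

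First I would establish the asymptotics \eqref{eq:meK0} and \eqref{eq:varK0_1}. Writing
\[
\me K_j^*(t)=\int_{(0,1]}\eee^{-xt}\frac{(xt)^j}{j!}\,d\nu(x),\qquad \nu=\sum_{k\ge 1}\delta_{p_k},
\]
and combining the identity $\nu([x,\infty))=\rho(1/x)$ with $\rho\in\Pi_{\ell,\infty}$ through a standard de Haan / Karamata argument yields $\me K_j^*(t)\sim\ell(t)/j$. For the variance, independence gives $\var K_j^*(t)=\me K_j^*(t)-\sum_k\mmp(A_k(t))^2$, and the elementary identity
\[
\sum_{k\ge 1}\mmp(A_k(t))^2=\frac{(2j)!}{(j!)^2\,2^{2j}}\me K_{2j}^*(2t)\sim\frac{(2j-1)!}{(j!)^2\,2^{2j}}\ell(t)
\]
(using $\ell(2t)\sim\ell(t)$ and the previous step with $j$ replaced by $2j$) then produces \eqref{eq:varK0_1}.

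Next I would verify the hypotheses of the extended LIL: slow variation of $\var K_j^*(t)$ (immediate from \eqref{eq:varK0_1}); smallness $\sup_k\mmp(A_k(t))\to 0$ (clear from $p_k\to 0$); and the oscillation/regularity condition built into the non-monotone version. These should deliver
\[
\limsup_{t\to\infty}\frac{K_j^*(t)-\me K_j^*(t)}{\sqrt{2\var K_j^*(t)\log\log t}}=1\quad\text{a.s.}
\]
and the symmetric statement with $\liminf$ equal to $-1$. The prescribed constants in \eqref{eq:LILkar}--\eqref{eq:inf02} then follow by rewriting $\log\log t$: under \eqref{eq:slowly}, $\log\ell(t)\sim\beta\log\log t$ so $\log\log t\sim\beta^{-1}\log\var K_j^*(t)$, giving $\pm(2/\beta)^{1/2}$; under \eqref{eq:slowly2}, $\log\log\ell(t)\sim\lambda\log\log t$ so $\log\log t\sim\lambda^{-1}\log\log\var K_j^*(t)$, giving $\pm(2/\lambda)^{1/2}$.

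The main obstacle will be verifying the non-monotone LIL's hypotheses at the Poisson-indicator level. For a nondecreasing family one discretises along an exponentially growing grid $(t_n)$ and closes the argument with a monotonicity sandwich; in the unimodal setting here one instead needs an independent control of the oscillations of $K_j^*(t)$ between grid points, presumably via a maximal inequality for centred sums of bounded independent variables together with estimates on $K_j^*(t)-K_j^*(t_n)$ derived from the Poisson increments $\pi(t)-\pi(t_n)$. Matching such estimates to the precise growth prescribed by \eqref{eq:slowly} and \eqref{eq:slowly2}, while preserving the sharp constants $(2/\beta)^{1/2}$ and $(2/\lambda)^{1/2}$, is where I expect the bulk of the technical effort to concentrate.
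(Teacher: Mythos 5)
Your computation of the first and second moment asymptotics is sound and essentially the paper's own route: the identity $\sum_k\mmp(A_k(t))^2=2^{-2j}\binom{2j}{j}\me K^*_{2j}(2t)$ is exactly formula \eqref{eq:equation_var}, and combined with $\me K^*_j(t)\sim\ell(t)/j$ and $\ell(2t)\sim\ell(t)$ it gives \eqref{eq:meK0} and \eqref{eq:varK0_1}. The gap is in the LIL itself. You take for granted that the non-monotone extension delivers $\limsup (K^*_j(t)-\me K^*_j(t))/(2\var K^*_j(t)\log\log t)^{1/2}=1$ a.s., i.e.\ a Hartman--Wintner-type normalization in the \emph{time} variable, and then obtain $(2/\beta)^{1/2}$, $(2/\lambda)^{1/2}$ by rewriting $\log\log t$. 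No such statement is available to cite, and it is not the form of the paper's Theorem~\ref{thm:main}: there the normalization is $(2(\mu-1)a(t)\log a(t))^{1/2}$ or $(2(q+1)a(t)\log\log a(t))^{1/2}$, where $\mu$ and $q$ are defined in \eqref{eq:infim}--\eqref{eq:infim2} through the ratio of $a(t)=\var K^*_j(t)\sim C\ell(t)$ to the mean $f(t)$ of a \emph{monotone majorant} of the indicator process, and here $f(t)=\me K_j(t)+\me K_{j+1}(t)\sim 2\rho(t)$, which is of strictly larger order than $\ell(t)$. The constants in \eqref{eq:LILkar}--\eqref{eq:inf02} come precisely from computing $\mu=1/\beta+1$ under \eqref{eq:slowly} and $q=1/\lambda-1$ under \eqref{eq:slowly2}, which requires the second-order asymptotics $\rho(t)\sim(\beta+1)^{-1}(\log t)^{\beta+1}l(\log t)$, resp.\ $\rho(t)\sim(\sigma\lambda)^{-1}\exp(\sigma(\log t)^\lambda)(\log t)^{1-\lambda}$ (Lemma~\ref{eq:thm21}); none of this appears in your plan. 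That your guessed $\log\log t$ normalization happens to coincide numerically with the paper's intrinsic one in these two regimes does not substitute for deriving it.

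The second, related omission is the verification of the hypotheses that make the non-monotone LIL applicable, which you defer as ``the bulk of the technical effort.'' The decisive device is the decomposition $\1_{\{\pi_k(t)=j\}}=\1_{\{\pi_k(t)\ge j\}}-\1_{\{\pi_k(t)\ge j+1\}}$, which yields the nondecreasing majorants $\Phi_k(t)=\1_{\{\pi_k(t)\ge j\}}+\1_{\{\pi_k(t)\ge j+1\}}$ of condition (A2); this is what replaces the ``monotonicity sandwich'' you allude to, and it is also what forces the exponent $\mu$ (hence the constants) to be governed by $\rho$ rather than by $\ell$. In addition, the lower-bound half needs the variance-localization condition (B22), verified in the paper by showing that boxes with $1/p_k\notin(t/\log t,\,t\log t]$ contribute $o(\var K^*_j(t))$ (Lemma~\ref{lem:karl}) and that the index windows along the sparse subsequence $\tau_n$ are eventually disjoint; your sketch of ``a maximal inequality for centred sums plus Poisson increment estimates'' is essentially a proposal to re-prove the general theorem from scratch, without the ingredients (Rosenthal plus Longnecker--Serfling chaining against $f$, exponential moments of the majorant increments) that the paper's framework supplies, and without any argument that the sharp constants survive. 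As it stands, the proposal establishes \eqref{eq:meK0} and \eqref{eq:varK0_1} but not the LILs.
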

\begin{rem}
	Treatment of the situations in which $\rho$ is slowly varying at $\infty$, yet $\rho\notin \Pi$ is beyond our reach. To reveal complications arising in this case we only mention that even the large-time asymptotics of $t\mapsto\var K^\ast_j(t)$ is not known. To find the asymptotic, a second-order relation for $\rho$ like \eqref{eq:deHaan} seems to be indispensable. If $\alpha\in (0,1]$, then the regular variation of $\rho$ alone ensures that, for all $\lambda>0$, $$\lim_{t\to\infty}\frac{\rho(\lambda t)-\rho(t)}{\rho(t)}=\lambda^\alpha-1.$$ Thus, no extra conditions are needed in this case.
\end{rem}

\begin{rem}
	Our present proof only works provided that, for some $a>0$, $\rho(t)=O((\ell(t))^a)$ as $t\to\infty$. In view of this,
	Theorem \ref{thm:Karlin0} does not cover
	the diverging slowly varying functions $\ell$ which grow slower than any positive power of the logarithm,
	for instance, $\ell(t)\sim\log\log t$ as $t\to\infty$. Indeed, it can be checked that $\lim_{t\to\infty}\ell(t)=\infty$ entails $\lim_{t\to\infty}(\rho(t)/\log t)=\infty$, whence trivially, for all $a>0$,
	$\lim_{t\to\infty}(\rho(t)/(\ell(t))^a)=\infty$.
\end{rem}

The following results are concerned with the cases $\alpha\in(0,1)$ and $\alpha=1$, respectively.

\begin{thm}\label{thm:Karlin}
	Assume that, for some $\alpha\in (0,1)$ and some $L$ slowly varying at $+\infty$,
	\begin{equation*}
		\rho(t)~\sim~ t^\alpha L(t),\quad t\to\infty.
	\end{equation*}
	Then, for each $j\in\mn$,
	\begin{equation}\label{eq:LILkar2}
		\limsup_{t\to\infty}\frac{K^*_j(t)-\me K^*_j(t)}{({\rm Var}\,K^*_j(t)\log\log {\rm Var}\,K^*_j(t))^{1/2}}=2^{1/2}\quad\text{{\rm a.s.}}
	\end{equation}
	and \begin{equation}\label{eq:inf1}
		\liminf_{t\to\infty}\frac{K^*_j(t)-\me K^*_j(t)}{({\rm Var}\,K^*_j(t)\log\log {\rm Var}\,K^*_j(t))^{1/2}}=-2^{1/2}\quad\text{{\rm a.s.}},
	\end{equation}
	\begin{equation}\label{eq:meKgeneral}
		\me K^*_j(t)~\sim~ \alpha\frac{\Gamma(j-\alpha)}{j!}t^\alpha L(t)
	\end{equation}
	and
	\begin{equation}\label{eq:varKgeneral}
		{\rm Var}\,K^*_j(t)~\sim~ c_{j,\,\alpha} t^\alpha L(t),\quad t\to\infty,
	\end{equation}
	where $\Gamma$ is the Euler gamma function and
	\begin{equation}\label{eq:cj}
		c_{j,\,\alpha}:=\alpha\Big(\frac{\Gamma(j-\alpha)}{j!}-\frac{2^{\alpha}\Gamma(2j-\alpha)}{2^{2j}(j!)^2}\Big)>0.
	\end{equation}
\end{thm}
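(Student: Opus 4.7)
The strategy is to apply the paper's general LIL for infinite sums of independent (possibly non-monotone) indicators to the representation \eqref{eq:kjast}, $K_j^\ast(t)=\sum_{k\ge 1}\1_{A_k(t)}$, where $A_k(t):=\{\text{box }k\text{ contains exactly }j\text{ balls at time }t\}$ and the events in the sum are independent with $\mmp(A_k(t))=(p_kt)^je^{-p_kt}/j!$. Before invoking the LIL I would derive the asymptotics \eqref{eq:meKgeneral} and \eqref{eq:varKgeneral}.

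For the mean, writing the sum as a Stieltjes integral against $\rho$,
\begin{equation*}
\me K_j^\ast(t)=\frac{1}{j!}\sum_{k\ge 1}(p_kt)^je^{-p_kt}=\frac{1}{j!}\int_{1}^\infty (t/y)^je^{-t/y}\,\dd\rho(y),
\end{equation*}
Karamata's Tauberian theorem under $\rho(t)\sim t^\alpha L(t)$ with $\alpha\in(0,1)$ (exactly as used in the proof of Theorem~5 of \cite{Karlin:1967}) yields \eqref{eq:meKgeneral}. Independence of the indicators gives
\begin{equation*}
\var K_j^\ast(t)=\me K_j^\ast(t)-\frac{1}{(j!)^2}\sum_{k\ge 1}(p_kt)^{2j}e^{-2p_kt},
\end{equation*}
and applying the same Tauberian estimate to the right-hand sum but with parameter $2t$ produces the correction term $\alpha 2^\alpha\Gamma(2j-\alpha)/(2^{2j}(j!)^2)$ and hence \eqref{eq:varKgeneral}; strict positivity of $c_{j,\alpha}$ follows from the Gamma duplication formula together with log-convexity of~$\Gamma$.

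With $\sigma_t^2:=\var K_j^\ast(t)\sim c_{j,\alpha}t^\alpha L(t)$ regularly varying of positive index, the quantitative hypotheses of the general non-monotone LIL are either trivial (summands bounded by $1$, $\sigma_t^2\to\infty$) or are direct consequences of regular variation of $\sigma_t^2$, and the LIL delivers the classical constant~$\sqrt 2$ appearing in \eqref{eq:LILkar2} and \eqref{eq:inf1}. The genuine obstacle is the non-monotonicity: $\1_{A_k(\cdot)}$ can switch from~$1$ back to~$0$ when an extra ball lands in box~$k$, so the textbook reduction of the continuous-time LIL to the LIL along a geometric subsequence $t_n=\theta^n$ by sandwiching between grid points is unavailable. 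The non-monotone LIL handles this by bounding
\begin{equation*}
\sup_{s\in[t_n,t_{n+1}]}|K_j^\ast(s)-K_j^\ast(t_n)|\;\le\;2\,\#\{k:\text{occupancy of box }k\text{ changes during }[t_n,t_{n+1}]\},
\end{equation*}
an independent sum of Bernoullis with expectation $O(\rho(t_{n+1})-\rho(t_n))\sim(\theta^\alpha-1)t_n^\alpha L(t_n)$. For $\theta$ sufficiently close to one this oscillation is $o\bigl((\sigma_{t_n}^2\log\log\sigma_{t_n}^2)^{1/2}\bigr)$, so that, combined with the LIL along the geometric grid (proved by Borel--Cantelli and a Kolmogorov-type exponential bound for independent bounded summands) and then letting $\theta\downarrow 1$, one obtains \eqref{eq:LILkar2} and \eqref{eq:inf1}.
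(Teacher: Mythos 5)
Your derivation of \eqref{eq:meKgeneral} and \eqref{eq:varKgeneral} is essentially the paper's (the identity $\var K_j^*(t)=\me K_j^*(t)-\sum_k(p_kt)^{2j}e^{-2p_kt}/(j!)^2$ is formula (6) of Gnedin--Hansen--Pitman, and the Tauberian/Karlin asymptotics are fine), so the first half is sound. The genuine gap is in the step you yourself identify as "the genuine obstacle". You bound $\sup_{s\in[t_n,t_{n+1}]}|K_j^*(s)-K_j^*(t_n)|$ along a geometric grid $t_n=\theta^n$ by the number of boxes whose occupancy changes during $[t_n,t_{n+1}]$, whose expectation you correctly estimate as of order $(\theta^\alpha-1)t_n^\alpha L(t_n)$. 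But in this regime $\var K_j^*(t_n)\sim c_{j,\alpha}t_n^\alpha L(t_n)$, so that bound is of the order of the \emph{variance itself}, which is far larger than the normalizer $(\var K_j^*(t_n)\log\log\var K_j^*(t_n))^{1/2}$; it is not $o(\cdot)$ for any fixed $\theta>1$, and letting $\theta\downarrow1$ afterwards cannot repair this. The cancellation that makes the true (centered) oscillation small is destroyed by the crude nonnegative count bound: what must be controlled is $\sup_s|(K_j^*(s)-\me K_j^*(s))-(K_j^*(t_n)-\me K_j^*(t_n))|$, and since $\me K_j^*$ and $\var K_j^*$ are of the same order here, any argument that majorizes an uncentered increment over a whole geometric block is doomed. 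This is precisely why the paper does not use geometric blocks: it chooses blocks with $f(t_{n+1})-f(t_n)=o(a(s_n))$ (so block ratios tend to $1$), refines each block by a partition whose $f$-increments lie in $[1,A]$ (condition (A5)), controls the grid maximum of the \emph{centered} increments via Rosenthal's inequality plus the Longnecker--Serfling maximal inequality (Lemmas \ref{ineq1}--\ref{billappl}), and handles the within-cell oscillation through exponential moments of the monotone majorant $Y$ (assumption (A2) and Lemma \ref{lem:exp_mom}).

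A secondary understatement: asserting that the hypotheses of the general non-monotone LIL are "trivial or direct consequences of regular variation of $\sigma_t^2$" glosses over the variance-localization condition (B22), which is what makes the lower bound $\limsup\ge\sqrt2$ work. One must show that boxes with $1/p_k\le t/\log t$ or $1/p_k>t\log t$ contribute $o(\var K_j^*(t))$ (Lemma \ref{lem:karl}, resting on truncation estimates from the companion paper) and that the resulting index bands are disjoint along the superexponentially growing subsequence $\tau_n$; this is not a formal consequence of $\sigma_t^2$ being regularly varying. The positivity of $c_{j,\alpha}$ via duplication and log-convexity is plausible but left unproved, whereas the paper gives an elementary inequality; that point is minor compared with the oscillation step, which as written would fail.
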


\begin{thm}\label{thm:Karlin1}
	Assume that, for some $L$ slowly varying at $+\infty$,
	\begin{equation*}\label{eqassump1}
		\rho(t)~\sim~ tL(t),\quad t\to\infty.
	\end{equation*}
	Then, for each $j\geq 2$, relation \eqref{eq:LILkar2} holds,
	\begin{equation}\label{eq:meK1}
		\me K^*_j(t)~\sim~ \frac{1}{j(j-1)}tL(t)
	\end{equation}
	and
	\begin{equation}\label{eq:cj1}
		\lim_{t\to\infty}\frac{{\rm Var}\,K^*_j(t)}{tL(t)}= \frac{1}{j(j-1)}-\frac{(2j-2)!}{2^{2j-1}(j!)^2}=c_{j,\,1}.
	\end{equation}
	
	\noindent Assume
	that, for each small enough $\gamma>0$,
	\begin{equation}\label{eq:exotic}
		\lim_{n\to\infty}\frac{\hat L(\exp((n+1)^{1+\gamma}))}{\hat L(\exp(n^{1+\gamma}))}=0,
	\end{equation}
	where $\hat L(t):=\int_t^\infty y^{-1}L(y){\rm d}y$, being
	well-defined for large $t$, is a function slowly varying at $\infty$ and satisfying
	\begin{equation}\label{eq:Lhat}
		\lim_{t\to\infty} \frac{L(t)}{\hat{L}(t)}=0.
	\end{equation}
	Then relation \eqref{eq:LILkar2} holds with $j=1$. If \eqref{eq:exotic} does not hold, then
	\begin{equation}\label{eq:LILkar200}
		\limsup_{t\to\infty}\frac{K^*_1(t)-\me K^*_1(t)}{({\rm Var}\,K^*_1(t)\log\log {\rm Var}\,K^*_1(t))^{1/2}}\leq 2^{1/2}\quad\text{{\rm a.s.}}
	\end{equation}
	and
	\begin{equation}\label{eq:inf11}
		\liminf_{t\to\infty}\frac{K^*_1(t)-\me K^*_1(t)}{({\rm Var}\,K^*_1(t)\log\log {\rm Var}\,K^*_1(t))^{1/2}}\geq -2^{1/2}\quad\text{{\rm a.s.}}
	\end{equation}
	In any event
	\begin{equation}\label{eq:exo}
		{\rm Var}\,K^*_1(t)~\sim~ \me K^*_1(t)~\sim~ t \hat L(t),\quad t\to\infty.
	\end{equation}
\end{thm}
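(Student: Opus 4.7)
The plan is to derive Theorem \ref{thm:Karlin1} from the representation \eqref{eq:kjast} by combining (i) Karlin--Karamata asymptotics for $\me K_j^*(t)$ and $\var K_j^*(t)$ with (ii) the paper's new law of the iterated logarithm for infinite sums of independent indicators $\sum_{k\geq 1}\1_{A_k(t)}$ whose underlying events are \emph{not} monotone in $t$, which is the setting announced in the abstract.

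\textbf{Step 1: mean and variance.} Because the count in box $k$ is $\mathrm{Poi}(p_k t)$, one has $\me K_j^*(t) = (j!)^{-1}\sum_{k\geq 1}(p_k t)^j e^{-p_k t}$ and $\var K_j^*(t) = \me K_j^*(t) - (j!)^{-2}\sum_{k\geq 1}(p_k t)^{2j} e^{-2 p_k t}$. Rewriting the second sum in terms of $\me K_{2j}^*(2t)$ up to an explicit combinatorial factor, a Karlin--Karamata Tauberian argument under $\rho(t)\sim tL(t)$ gives \eqref{eq:meK1} and \eqref{eq:cj1} for $j\geq 2$. For $j=1$, the same manipulation produces $\me K_1^*(t)\sim t\hat L(t)$, and standard Karamata theory shows that $\hat L$ is slowly varying and dominates $L$ as in \eqref{eq:Lhat}; the subtracted second sum is then of order $tL(t) = o(t\hat L(t))$, so \eqref{eq:exo} follows.

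\textbf{Step 2: LIL for $j\geq 2$.} For $j\geq 2$ the variance is regularly varying of index $1$, so along the epochs $t_n = \exp(n^{1+\gamma})$ with small $\gamma>0$ one has $\var K_j^*(t_{n+1})/\var K_j^*(t_n)\to\infty$. I would then apply the extended LIL for independent non-monotone indicators: the upper bound $\limsup\leq\sqrt 2$ follows from a Fuk--Nagaev-type tail bound together with the first Borel--Cantelli lemma along $(t_n)$; the matching lower bound is obtained by the second Borel--Cantelli applied to the essentially independent increments $K_j^*(t_{n+1}) - K_j^*(t_n)$, and the continuous limit is recovered via a monotone bracketing between $t_n$ and $t_{n+1}$ (the non-monotonicity of each $\1_{A_k(\cdot)}$ being absorbed into an error term of smaller order than the fluctuations). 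The $\liminf$ statement is symmetric.

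\textbf{Step 3: $j=1$, and the main obstacle.} Because $\var K_1^*(t)\sim t\hat L(t)$ with $\hat L$ only slowly varying, the ratios $\var K_1^*(t_{n+1})/\var K_1^*(t_n)$ need not tend to infinity, and the Borel--Cantelli lower-bound step now requires $\hat L(t_n)/\hat L(t_{n+1})\to 0$ along $t_n = \exp(n^{1+\gamma})$, which is precisely \eqref{eq:exotic}. When \eqref{eq:exotic} holds, the argument of Step 2 carries over and yields the full LIL with constant $\sqrt 2$; when it fails, the independent ``fresh'' increments cannot saturate the lower bound and only the one-sided inequalities \eqref{eq:LILkar200} and \eqref{eq:inf11} survive from the upper-tail half of the argument. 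The principal technical obstacle is formulating and applying the extended LIL to the bump indicators $\1_{A_k(t)}$ --- each of which switches on at the $j$-th arrival in box $k$ and back off at the $(j{+}1)$-th --- for which the monotone-family techniques of Buraczewski--Iksanov--Kotelnikova must be replaced by a truncation scheme adapted to this ``on/off'' dynamics; this is best handled by proving the extended LIL as a standalone result and then verifying its hypotheses in the present setting.
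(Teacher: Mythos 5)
Your high-level architecture for $j\geq 2$ matches the paper: obtain \eqref{eq:meK1} and \eqref{eq:cj1} from the Poisson representation and the identity $\var K^*_j(t)=\me K^*_j(t)-2^{-2j}\binom{2j}{j}\me K^*_{2j}(2t)$, and deduce the LIL from a standalone LIL for sums of independent, non-monotone indicators whose hypotheses are then verified for $\1_{\{\pi_k(t)=j\}}$. But the sketch of how that standalone LIL works contains substantive errors. The sparse epochs $t_n=\exp(n^{1+\gamma})$, along which the variance ratio diverges, serve only the \emph{lower} bound (independence of fresh increments, second Borel--Cantelli); the \emph{upper} bound cannot be run along them, because between consecutive such epochs the mean and variance grow by an unbounded factor, so the oscillation of $K^*_j$ over $[t_n,t_{n+1}]$ is not ``an error term of smaller order than the fluctuations'' and no monotone bracketing can recover the continuous-time statement. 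The upper bound requires a dense grid (in the paper, $f(t_n)\approx\exp(n^{(1-\kappa)/(q+1)})$) together with a maximal-inequality/chaining control of $\sup_{u\in[t_n,t_{n+1}]}|X^\ast(u)-X^\ast(t_n)|$, and it is precisely here that the non-monotonicity must be handled by an explicit nondecreasing majorant: the paper uses $\1_{\{\pi_k(t)=j\}}=\1_{\{\pi_k(t)\ge j\}}-\1_{\{\pi_k(t)\ge j+1\}}$ with majorant $\Phi_k(t)=\1_{\{\pi_k(t)\ge j\}}+\1_{\{\pi_k(t)\ge j+1\}}$, $f(t)=\me K_j(t)+\me K_{j+1}(t)$, plus an exponential-moment bound and a Longnecker--Serfling maximal inequality. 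Your proposal never identifies such a majorant, and it also omits the variance-localization hypothesis (the paper's (B21)/(B22)) that the lower bound needs: one must exhibit disjoint blocks of boxes along the sparse epochs capturing asymptotically the full variance, verified for $j\ge2$ by truncating at $t/\log t<1/p_k\le t\log t$.

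The $j=1$ discussion has a genuine gap. Your stated mechanism for \eqref{eq:exotic} is wrong: since $\var K^*_1(t)\sim t\hat L(t)$ is regularly varying of index $1$, the ratio $\var K^*_1(t_{n+1})/\var K^*_1(t_n)$ along $t_n=\exp(n^{1+\gamma})$ \emph{always} diverges (and you also inverted the ratio; \eqref{eq:exotic} is $\hat L(t_{n+1})/\hat L(t_n)\to0$, the only possible direction since $\hat L$ is nonincreasing). The true role of \eqref{eq:exotic} is spatial, not temporal: for $\alpha=1$, $j=1$ the variance is carried by boxes with $1/p_k>t$, an unbounded range, so disjoint blocks of boxes indexed by consecutive sparse epochs capture fraction $1-\hat L(\tau_{n+1})/\hat L(\tau_n)$ of the variance, and this tends to $1$ exactly under \eqref{eq:exotic}; when \eqref{eq:exotic} fails only the upper-bound half survives, giving \eqref{eq:LILkar200}--\eqref{eq:inf11}. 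Your claim that ``the argument of Step 2 carries over'' for $j=1$ therefore asserts precisely the step that does not carry over (the paper's Lemma \ref{lem:karl} explicitly excludes $\alpha=1$, $j=1$). The paper instead avoids applying the non-monotone LIL to $K^*_1$ altogether: it writes $K^*_1=K_1-K_2$, notes $\var K_2(t)\sim \tfrac12 tL(t)=o(t\hat L(t))$ by \eqref{eq:Lhat} so that the $K_2$-fluctuations are negligible at the scale $(\var K_1(t)\log\log\var K_1(t))^{1/2}$, and imports the known (monotone) LIL for $K_1$, which is where the dichotomy in \eqref{eq:exotic} originates. Without either this reduction or a verified variance-capture condition for $K^*_1$ under \eqref{eq:exotic}, your Step 3 does not establish the $j=1$ statements.
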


Theorems \ref{thm:Karlin0}, \ref{thm:Karlin} and \ref{thm:Karlin1} will be deduced in Section \ref{sec:Karlin} from the LIL for infinite sums of independent indicators given in Theorem~\ref{thm:main}.

Finally, we present LILs for the variables $\mathcal{K}^*_j(n)$.

\begin{thm}\label{thm:depoiss}
	Under the assumptions of Theorems \ref{thm:Karlin0}, \ref{thm:Karlin} or \ref{thm:Karlin1}, for $j\in\mn$, all the LILs stated there
	hold true with $\mathcal{K}^*_j(n)$, $\me \mathcal{K}^*_j(n)$ and $\var \mathcal{K}^*_j(n)$ replacing $K_j^*(t)$, $\me K_j^*(t)$ and $\var K_j^*(t)$, and $n\to\infty$ replacing $t\to\infty$.
\end{thm}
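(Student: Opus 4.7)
The strategy is to depoissonize via the natural coupling in which the ball arrival times are the jump times $S_1<S_2<\cdots$ of the Poisson process $\pi$. Under this coupling, $\mathcal{K}_j^*(n)=K_j^*(S_n)$, and we decompose
\[
\mathcal{K}_j^*(n)-\me\mathcal{K}_j^*(n)=\bigl(K_j^*(n)-\me K_j^*(n)\bigr)+R_n,
\]
with $R_n:=\bigl(K_j^*(S_n)-K_j^*(n)\bigr)-\bigl(\me K_j^*(S_n)-\me K_j^*(n)\bigr)$. Let $\phi_n$ denote the normalization from Theorems \ref{thm:Karlin0}--\ref{thm:Karlin1} evaluated at $t=n$. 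Karlin's classical Poisson--binomial comparison, together with the asymptotics \eqref{eq:meK0}--\eqref{eq:exo}, yields $\me\mathcal{K}_j^*(n)\sim\me K_j^*(n)$ and $\var\mathcal{K}_j^*(n)\sim\var K_j^*(n)$, so the deterministic and Poissonized normalizations agree asymptotically. The proof then reduces to two points: establishing the Poissonized LILs along integer times, and proving $R_n=o(\phi_n)$ almost surely. The former is not a substantial issue: the Borel--Cantelli arguments underlying Theorems \ref{thm:Karlin0}--\ref{thm:Karlin1} rely on geometric subsequences that can be restricted to integers with no loss, and the upper bound along integers is automatically inherited from the upper bound along reals.

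The bound on $R_n$ rests on two ingredients: the Poisson LIL $|\pi(n)-n|=O(\sqrt{n\log\log n})$ a.s., and a refined estimate on $|K_j^*(S_n)-K_j^*(n)|$. The naive bound $|K_j^*(S_n)-K_j^*(n)|\leq|\pi(n)-n|$ (each ball alters $K_j^*$ by at most one) is sufficient only in the regime $\alpha=1$ with the slowly varying factor in $\var K_j^*$ bounded below. In general we exploit the observation that a ball alters $K_j^*$ only when it enters a box currently containing $j-1$ or $j$ balls; the Poissonian identity $\sum_k p_k\P(X_k(n)=i)=(i+1)\me K_{i+1}^*(n)/n$ then shows that, over a window of width $h=C\sqrt{n\log\log n}$ around $n$, the expected number of such ``active'' balls is of order $h\,\me K_j^*(n)/n$. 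Under the assumptions of Theorems \ref{thm:Karlin0}, \ref{thm:Karlin} and \ref{thm:Karlin1}, this quantity is readily checked to be $o(\phi_n)$ via the explicit asymptotics of $\me K_j^*(n)$ and $\var K_j^*(n)$.

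The almost sure version of this estimate is produced by Bennett's (or Bernstein's) inequality applied to the sum of independent Poisson indicators counting active balls, combined with Borel--Cantelli along a geometric subsequence $n_k=\lfloor\theta^k\rfloor$ and interpolation between consecutive $n_k$'s via the monotonicity of the active-ball count in the window length. Once $R_n=o(\phi_n)$ is secured almost surely, Theorem \ref{thm:depoiss} follows from the decomposition above by substituting the integer-time LIL for $K_j^*(n)$. The main obstacle lies in the refined concentration step in the regimes $\alpha<1$ (Theorem \ref{thm:Karlin}) and $\alpha=0$ (Theorem \ref{thm:Karlin0}): here the crude $O(\sqrt{n\log\log n})$ bound on $|K_j^*(S_n)-K_j^*(n)|$ exceeds $\phi_n$, so the sharp bound on the ``active'' transitions is genuinely needed, and its almost sure upgrade requires careful exploitation of the independence of the indicators in \eqref{eq:kjast}.
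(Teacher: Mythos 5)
Your overall strategy (coupling via the arrival times so that $\mathcal{K}_j^*(n)=K_j^*(S_n)$, splitting off the fluctuation term, and using the integer-time lower bound built into the Borel--Cantelli subsequences) is the same skeleton as the paper's, but there is a genuine gap at the step you dismiss as classical: the claim $\var \mathcal{K}_j^*(n)\sim\var K_j^*(n)$. This is not a known Poisson--binomial comparison; it is precisely Proposition \ref{prop:var_depoiss} of the paper, and proving it is a substantial part of the de-Poissonization. The diagonal (single-box) comparison is indeed elementary, but in the deterministic scheme the occupancy counts of distinct boxes are dependent, so one must show that the cross-covariance sum $\sum_{i\neq k}\big(\mmp(A_i(j,n)\cap A_k(j,n))-\mmp(A_i(j,n))\mmp(A_k(j,n))\big)$ is $o(\var K_j^*(n))$; this requires the expansion of $(1-p_i-p_k)^{n-2j}$ against $(1-p_i)^{n-j}(1-p_k)^{n-j}$ together with the series estimate of Lemma \ref{lem:series} and the fact $\var K_j^*(t)=o(t)$. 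Without this, normalizing by $\var\mathcal{K}_j^*(n)$ as the theorem requires is unjustified. A related (repairable) imprecision: for the centering you need $|\me\mathcal{K}_j^*(n)-\me K_j^*(n)|=o(\phi_n)$, and mere asymptotic equivalence of the means does not give this, since the means are of the order of the variance, much larger than $\phi_n$; what saves you is the classical relation \eqref{eq:mean_depois}, which gives the difference tending to $0$, but that is the statement you should invoke.

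On the fluctuation term $R_n$ your route differs from the paper's: the paper does not redo a concentration argument but imports the bound for the monotone counts $K_j(S_n)-K_j(n)$ from the companion work and then uses $K_j^*=K_j-K_{j+1}$ (together with the variance comparisons of Lemmas \ref{lem:old} and \ref{var}), which is shorter and avoids exactly the delicate points in your sketch. Your ``active ball'' argument is plausible at the level of expectations in all three regimes, but the almost sure upgrade is underspecified where it matters: the interpolation between consecutive $n_k$ cannot use the union window over a whole geometric block, because the total number of active transitions over such a block is of order $\var K_j^*(n_k)$, which dwarfs $\phi_{n_k}$; you need a localization into $\asymp\sqrt{n_k/\log\log n_k}$ short windows and a union bound over them. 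In the regime $\rho\in\Pi_{\ell,\,\infty}$, where $\phi_n$ grows only like a power of $\log n$, a plain Bernstein tail $\exp(-c\,\varepsilon\phi_n)$ does not beat the polynomially many windows; one must exploit the Poissonian (Bennett/Chernoff) tail and the fact that the per-window mean tends to $0$ polynomially fast to get super-polynomial decay. So the concentration route can likely be made to work, but as written it has a soft spot there, and in any case the missing variance de-Poissonization is the essential omission.
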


A transfer of results available for the Poissonized version to the deterministic version is called
{\it de-Poissonization}. Theorem \ref{thm:depoiss} will be deduced in Section~\ref{sec:Karlin} from Theorems~\ref{thm:Karlin0},~\ref{thm:Karlin} and \ref{thm:Karlin1} with the help of a de-Poissonization technique.

\section{LIL for infinite sums of independent indicators}

Let $(A_1(t))_{t\geq 0}$, $(A_2(t))_{t\geq 0},\ldots$ be independent families of events defined on a common probability space $(\Omega, \mathcal{F}, \mmp)$. Assume that $\sum_{k\geq 1}\mmp (A_k(t))<\infty$, for each $t\geq 0$, and then put $$X(t):=\sum_{k\geq 1}\1_{A_k(t)},\quad t\geq 0.$$ Since, for $t\geq 0$, $b(t):=\me X(t)=\sum_{k\geq 1}\mmp (A_k(t))<\infty$, we infer $X(t)<\infty$ almost surely (a.s.) and further $$a(t):={\rm Var}\,X(t)=\sum_{k\geq 1}\mmp(A_k(t))(1-\mmp(A_k(t)))\leq b(t)<\infty.$$

Under the assumption that, for each $k\in\mn$ and $0\leq s<t$, $A_k(s)\subseteq A_k(t)$ a LIL for $X(t)$ can be found in Theorem 1.6 of \cite{Buraczewski+Iksanov+Kotelnikova:2023+}. As an application, LILs for $K_j(t)$ were proved in that paper, see Theorems 3.1, 3.3 and 3.4 therein. According to \eqref{eq:kjast}, the variable $K^\ast_j(t)$ is a particular instance of $X(t)$. However, for each $k\in\mn$, the corresponding events $(A_k(t))_{t\geq 0}$ are not monotone in~$t$, which shows that a LIL for $K_j^\ast(t)$ cannot be deduced from Theorem 1.6 of \cite{Buraczewski+Iksanov+Kotelnikova:2023+}. This serves a motivation for the present section. Here, dropping the monotonicity assumption we provide sufficient conditions under which a LIL for $X(t)$ holds.

We shall prove a LIL for $X(t)$ under the following assumptions (A1)-(A5) and (B1)-(B21) or (B22). The lack of monotonicity only affects our proof of $\limsup_{t\to\infty}\leq 1$ a.s. to be done under (A1)-(A5). In view of this, (A2)-(A5) are modified versions of the corresponding assumptions in \cite{Buraczewski+Iksanov+Kotelnikova:2023+}. (B1), (B21) and (B22) coincide with the corresponding assumptions in \cite{Buraczewski+Iksanov+Kotelnikova:2023+} under which the relation $\limsup_{t\to\infty}\geq 1$ a.s. was proved in the cited article.

\noindent (A1) $\lim_{t\to\infty}a(t)=\infty$.

\sloppy \noindent (A2) There exist independent a.s. nondecreasing stochastic processes $(\Phi_1(t))_{t\geq 0}$, $(\Phi_2(t))_{t\geq 0},\ldots$
taking values in $\{0, 1, 2, \ldots, M\}$ for some $M\in\mn$ and satisfying

\noindent (a) for each $k\in\mn$, $0\le s< t$, $|\1_{A_k(t)}-\1_{A_k(s)}|\le \Phi_k(t) - \Phi_k(s)$ a.s.;

\noindent (b) for each $t\geq 0$, $f(t):=\me Y(t)<\infty$, where $Y(t):=\sum_{k\geq 1}\Phi_k(t)$ for $t\ge 0$;

\noindent (c) $b(0)\le f(0)$.

\begin{rem}\label{rem:bf}
	(A2a) and (A2b) entail, for $0\le s<t$,
	\begin{equation}\label{eq:bf_with_s}
		|b(t)-b(s)|\le \me \sum_{k\geq 1} |\1_{A_k(t)}-\1_{A_k(s)}|\le f(t) - f(s).
	\end{equation}
	Inequality \eqref{eq:bf_with_s} with $s=0$ and (A2c) together imply that
	\begin{equation}\label{eq:bf}
		b(t)\le f(t) - f(0)+b(0)\le f(t),\quad t\geq 0.
	\end{equation}
\end{rem}

\begin{rem}\label{rem:A2_ex}
	Here is an example of $X$ satisfying (A2) which is motivated by a prospective application of the LIL for $X(t)$ to the variables $K_j^\ast(t)$. Let $(B_1(t))_{t\ge 0}$, $(B_2(t))_{t\ge 0},\ldots$ and $(C_1(t))_{t\ge 0}$, $(C_2(t))_{t\ge 0},\ldots$ be two families of independent events satisfying
	
	\noindent (i) for each $k\in\mn$ and $t\ge 0$, $C_k(t)\subseteq B_k(t)$;
	
	\noindent (ii) for each $k\in\mn$ and $0\leq s<t$, $B_k(s)\subseteq B_k(t)$ and $C_k(s)\subseteq C_k(t)$;
	
	\noindent (iii) for $t\geq 0$, $\sum_{k\geq 1} \mmp(B_k(t))<\infty$.
	
	\noindent For each $k\in\mn$ and $t\ge 0$, put $A_k(t):=B_k(t)\setminus C_k(t)$ and $\Phi_k(t):=\1_{B_k(t)}+\1_{C_k(t)}$. The so defined $\Phi_k$ is a.s.\ nondecreasing.
	Since, for $0\leq s<t$, $\1_{C_k(s)}\le\1_{C_k(t)}$ and $\1_{B_k(s)}\le\1_{B_k(t)}$ a.s. we conclude that
	$$
	|\1_{A_k(t)}-\1_{A_k(s)}|=|\1_{B_k(t)}-\1_{C_k(t)}-\1_{B_k(s)}+\1_{C_k(s)}|\le \Phi_k(t)-\Phi_k(s) \quad\text{a.s.}
	$$
	While (A2b) is a consequence of (iii), (A2c) is justified by $\mmp(A_k(0))=\mmp(B_k(0))-\mmp(C_k(0))\le \me \Phi_k(0)$.
	
	Putting $C_k(t):=\oslash$ for all $k\in\mn$ and $t\geq 0$ we recover the case of monotone in $t$ families $(A_k(t))_{t\geq 0}$ treated in \cite{Buraczewski+Iksanov+Kotelnikova:2023+}.
\end{rem}

\begin{rem}
	The assumption imposed in~\cite{Buraczewski+Iksanov+Kotelnikova:2023+} that, for each $k\in\mn$, the family $(A_k(t))_{t\geq 0}$ is 
	nondecreasing in $t$ simplifies significantly the analysis of \eqref{eq:Yf}. Indeed, for any $\theta>0$, we then infer 
	$\sup_{v\in[0,\theta]}\,|X(t+v)-X(t)|=X(t+\theta)-X(t)$ and $\sup_{v\in[0,\theta]}\,|b(t+v)-b(t)|=b(t+\theta)-b(t)$.  
	In the absence of the monotonicity assumption, it is necessary to find some monotone majorant for $|X(t+v)-X(t)|$ which is sufficiently close to the true supremum.
	
	One may expect that $f$ behaving like $f(t)=O(b(t))$ as $t\to\infty$ should do the job. 
	What is not trivial is that $f$ satisfying 
	$\lim_{t\to\infty}(f(t)/b(t))= \infty$  
	may also be suitable. For instance, consider the setting of Theorem \ref{thm:Karlin0} and $X(t):=K_j^*(t)$ for $j\in\mn$. By~\eqref{eq:meK0}, $b(t)\sim {\rm const}\, \ell(t)$ as $t\to\infty$, and by \eqref{eq:f_remark} and~\eqref{eq:mean0old}, $f(t)\sim{\rm const}\,\rho(t)$ as $t\to\infty$. Applying Lemma \ref{eq:thm21} we conclude that indeed $\lim_{t\to\infty}(f(t)/b(t))=\infty$. 
\end{rem}

\noindent (A3) Under (A2), there exists $\mu^\ast\geq 1$ such that $f(t)=O((a(t))^{\mu^\ast})$
as $t\to\infty$. In view of~\eqref{eq:bf} and $a(t)\le b(t)$ for $t\ge 0$, necessarily $\mu^\ast\geq 1$. Put
\begin{equation}\label{eq:infim}
	\mu:=\inf\{\mu^\ast: f(t)=O((a(t))^{\mu^\ast})\}.
\end{equation}
If $\mu=1$, we assume additionally that
either $f$ is eventually continuous or $${\lim\inf}_{t\to\infty}(\log f(t-1)/\log f(t))>0;$$

\noindent
and that
\begin{equation}\label{eq:infim2}
	f(t)/a(t)=O(z_q(a(t))),\quad t\to\infty,
\end{equation}
where $z_q(t):=(\log t)^{q}\mathcal{L}(\log t)$ for some $q\geq 0$ and $\mathcal{L}$ slowly varying at $\infty$ and, if $q>0$, $f(t)/a(t)\neq O(z_s(a(t)))$ for $s\in (0,q)$.

Before introducing our next assumption we need some preparation.
In view of (A1) and $a(t)\leq f(t)$ for $t\ge 0$, we infer $\lim_{t\to\infty}f(t)=\infty$. For each $\varrho\in (0,1)$, put
\begin{equation}\label{eq:mutheta}
	\mu_\varrho:=\mu+\varrho\quad \text{if}~\mu>1\quad\text{and}\quad q_\varrho:=q+\varrho \quad \text{if}~\mu=1.
\end{equation}
Assuming (A3), fix any $\kappa\in (0,1)$ and $\varrho\in (0,1)$ and put
\begin{equation}\label{tn}
	t_n=t_n(\kappa, \mu):=\inf\{t>0: f(t)>v_n(\kappa,\mu)\}
\end{equation}
for $n\in\mn$, where $v_n(\kappa, 1)=v_n(\kappa, 1, q, \varrho)=\exp(n^{(1-\kappa)/(q_\varrho+1)})$ and $v_n(\kappa,\mu)=v_n(\kappa, \mu, \varrho)=n^{\mu_\varrho(1-\kappa)/(\mu_\varrho-1)}$ for $\mu>1$. Plainly, the sequence $(t_n)_{n\in\mn}$ is nondecreasing with $\lim_{n\to\infty} t_n=+\infty$.

\noindent (A4) Fix any $\kappa\in (0,1)$ and $\varrho\in (0,1)$. There exists a function $a_0$ satisfying $a(t)\sim a_0(t)$ as $t\to\infty$, and, for each $n$ large enough, there exists $s_n=s_n(\kappa,\mu)\in[t_n(\kappa,\mu),\,t_{n+1}(\kappa,\mu)]$ such that $a_0(t)\geq a_0(s_n)$ for all $t\in [t_n, t_{n+1}]$.

\begin{rem}\label{rem:A4}
	A sufficient condition for (A4) is either
	eventual lower semi-continuity or eventual monotonicity of $a_0$. The former means that $\liminf_{y\to x}a_0(y)\geq a_0(x)$, for all large enough $x$.
\end{rem}

\noindent (A5) For each $n$ large enough, there exists $A>1$ and a partition $t_n=t_{0,\,n}<t_{1,\,n}<\ldots<t_{j,\,n}=t_{n+1}$ with $j=j_n$ satisfying $$1\leq f(t_{k,\,n})-f(t_{k-1,\,n})\leq A,\quad 1\leq k\leq j$$
and, for all $\varepsilon>0$, $\big(j_n\exp(-\varepsilon (a(s_n))^{1/2})\big)$ is a summable sequence.
\begin{rem}\label{suff}
	A sufficient condition for (A5) is that $f$ is eventually strictly increasing and eventually continuous. Indeed, one can then choose a partition that satisfies, for large~$n$, $f(t_{k,\,n})-f(t_{k-1,\,n})=1$ for $k\in\mn$, $k\leq j-1$ and $f(t_{j,\,n})-f(t_{j-1,\,n})\in [1,2)$. As a consequence,
	$$j_n=\lfloor v_{n+1}(\kappa, \mu)-v_n(\kappa,\mu)\rfloor=o(a(s_n)),\quad n\to\infty$$ by Lemma \ref{lem:aux1}(b) below, so that the sequence $\big(j_n\exp(-\varepsilon (a(s_n))^{1/2})\big)$ is indeed summable.
\end{rem}

Assuming (A1) and (A3), fix any $\gamma>0$ and put
\begin{equation}\label{eq:tau}
	\tau_n=\tau_n(\gamma,\mu):=\inf\{t>0: a(t)>w_n(\gamma,\mu)\}
\end{equation}
for large $n\in\mn$ with $\mu$ as given in \eqref{eq:infim}. Here, with $q$ as given in \eqref{eq:infim2}, $w_n(\gamma, 1)=w_n(\gamma, 1, q)=\exp(n^{(1+\gamma)/(q+1)})$ if $\mu=1$ and $w_n(\gamma, \mu)=n^{(1+\gamma)/(\mu-1)}$ if $\mu>1$.

\noindent (B1) The function $a$ is eventually continuous or $\lim_{t\to\infty}(\log a(t-1)/\log a(t))=1$ if $\mu=1$ and $\lim_{t\to\infty}(a(t-1)/a(t))=1$ if $\mu>1$.

\noindent (B21) For sufficiently large $t>0$ and each $\varsigma>0$, let $R_\varsigma(t)$ denote a set of positive integers satisfying the following two conditions: for each $\varsigma>0$ and each $\gamma>0$, both close to $0$ there exists $n_0=n_0(\varsigma, \gamma)\in\mn$ such that the sets $R_\varsigma(\tau_{n_0}(\gamma,\mu))$, $R_\varsigma(\tau_{n_0+1}(\gamma,\mu)),\ldots$ are disjoint; and
\begin{equation*}\label{eq:onevar}
	\lim_{t\to\infty}\frac{{\rm Var}\Big(\sum_{k\in R_\varsigma(t)}\1_{A_k(t)}\Big)}{{\rm Var}\,X(t)}=1-\varsigma.
\end{equation*}

\noindent (B22) For sufficiently large $t>0$, let $R_0(t)$ denote a set of positive integers satisfying the following two conditions: for each $\gamma>0$ close to $0$ there exists $n_0=n_0(\gamma)\in\mn$ such that the sets $R_0(\tau_{n_0}(\gamma,\mu))$, $R_0(\tau_{n_0+1}(\gamma,\mu)),\ldots$ are disjoint; and
\begin{equation}\label{eq:one}
	\lim_{t\to\infty}\frac{{\rm Var}\Big(\sum_{k\in R_0(t)}\1_{A_k(t)}\Big)}{{\rm Var}\,X(t)}=1.
\end{equation}

Now we are ready to present a LIL for infinite sums of independent indicators.
\begin{thm}\label{thm:main}
	Suppose (A1)-(A5), (B1) and either (B21) or (B22). Then, with $\mu\geq 1$ and $q\geq 0$ as defined in \eqref{eq:infim} and \eqref{eq:infim2}, respectively, $$\limsup_{t\to\infty}\frac{X(t)-\me X(t)}{(2(q+1){\rm Var}\,X(t)\log\log
		{\rm Var}\,X(t))^{1/2}}=1\quad\text{{\rm a.s.}}$$ and $$\liminf_{t\to\infty}\frac{X(t)-\me X(t)}{(2(q+1){\rm Var}\,X(t)\log\log
		{\rm Var}\,X(t))^{1/2}}=-1\quad\text{{\rm a.s.}}$$ if $\mu=1$ and $$\limsup_{t\to\infty}\frac{X(t)-\me X(t)}{(2(\mu-1){\rm Var}\,X(t)\log {\rm Var}\,X(t))^{1/2}}=1\quad\text{{\rm a.s.}}$$ and $$\liminf_{t\to\infty}\frac{X(t)-\me X(t)}{(2(\mu-1){\rm Var}\,X(t)\log {\rm Var}\,X(t))^{1/2}}=-1\quad\text{{\rm a.s.}}$$ if $\mu>1$.
\end{thm}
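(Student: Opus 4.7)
The theorem comprises four assertions. The bounds $\limsup \geq 1$ a.s.\ and $\liminf \leq -1$ a.s.\ use only (A1), (B1), and (B21)/(B22), none of which refers to monotonicity in $t$ of $(A_k(t))_{t\ge 0}$, so the proof in \cite{Buraczewski+Iksanov+Kotelnikova:2023+} — a lower-tail Bernstein estimate on the essentially disjoint subsums $\sum_{k\in R_\varsigma(\tau_n)}\1_{A_k(\tau_n)}$ or $\sum_{k\in R_0(\tau_n)}\1_{A_k(\tau_n)}$ along the discretization \eqref{eq:tau}, followed by the second Borel--Cantelli lemma — transfers verbatim. The remaining work is the upper-bound direction $\limsup\le 1$ under (A1)--(A5); the companion $\liminf\ge -1$ is obtained by exchanging upper- and lower-tail Bernstein throughout. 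Write $\phi(x):=(2(q+1)x\log\log x)^{1/2}$ in the case $\mu=1$ and $\phi(x):=(2(\mu-1)x\log x)^{1/2}$ in the case $\mu>1$ for the intended normalizer.

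Discretize time along $(t_n)$ from \eqref{tn}, and inside each super-interval $[t_n,t_{n+1}]$ apply the fine partition $t_n=t_{0,n}<t_{1,n}<\cdots<t_{j_n,n}=t_{n+1}$ from (A5). For $t\in[t_{k-1,n},t_{k,n}]$ decompose
\[
X(t)-b(t)=\bigl(X(t_{k-1,n})-b(t_{k-1,n})\bigr)+\bigl(X(t)-X(t_{k-1,n})\bigr)-\bigl(b(t)-b(t_{k-1,n})\bigr).
\]
The third term is at most $f(t_{k,n})-f(t_{k-1,n})\le A$ by \eqref{eq:bf_with_s} and (A5). For the first term, an upper-tail Bernstein applied to $X(t_{k,n})-b(t_{k,n})$ — a sum of independent centered $[-1,1]$-valued summands whose variance $a(t_{k,n})$ is asymptotic to $a(t_n)$ by (A4) and the slow growth of $v_n(\kappa,\mu)$ — at level $(1+\varepsilon)\phi(a(s_n))$ gives a tail whose product with the $(j_n+1)$-fold union bound is summable in $n$, by the explicit choice of $v_n$ in (A3); Borel--Cantelli then yields $\max_{0\le k\le j_n}(X(t_{k,n})-b(t_{k,n}))\le(1+\varepsilon)\phi(a(s_n))$ a.s.\ for all large $n$.

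The middle term is where the non-monotonicity of the $(A_k(t))_{t\ge 0}$ enters and is the principal obstacle: in the monotone setting of \cite{Buraczewski+Iksanov+Kotelnikova:2023+} one trivially takes $Y\equiv X$, but under (A2) the majorant $Y$ may have mean $f$ growing strictly faster than $b=\me X$, so a naive Bernstein bound on $Y(t_{n+1})-Y(t_n)$ would overshoot $\phi(a(s_n))$. By (A2a) and monotonicity of the $\Phi_k$, $|X(t)-X(t_{k-1,n})|\le Y(t_{k,n})-Y(t_{k-1,n})$, and the right-hand side is a sum of independent $[0,M]$-valued summands with total mean $f(t_{k,n})-f(t_{k-1,n})\in[1,A]$ by (A5). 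Since $\varepsilon(a(s_n))^{1/2}$ eventually dominates the mean by a growing factor, Bernstein/Chernoff in the linear regime yields
\[
\mmp\bigl(Y(t_{k,n})-Y(t_{k-1,n})\ge\varepsilon(a(s_n))^{1/2}\bigr)\le\exp\bigl(-c_\varepsilon(a(s_n))^{1/2}\bigr)
\]
for some $c_\varepsilon>0$. A union bound over $1\le k\le j_n$ produces the tail $j_n\exp(-c_\varepsilon(a(s_n))^{1/2})$, whose summability is precisely the final clause of (A5); Borel--Cantelli closes the gap. Assembling the three estimates gives $X(t)-b(t)\le(1+\varepsilon+o(1))\phi(a(s_n))$ a.s.\ uniformly on $[t_n,t_{n+1}]$ for all large $n$, and (A4) allows replacing $a(s_n)$ by $a(t)$ up to a factor $1+o(1)$. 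Letting $\varepsilon,\kappa,\varrho\downarrow 0$ along countable sequences yields $\limsup\le 1$ a.s.
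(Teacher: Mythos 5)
Your reduction of the lower-bound halves to the argument of \cite{Buraczewski+Iksanov+Kotelnikova:2023+} matches the paper (Lemma \ref{lem:new} and Proposition \ref{lilhalf2}), and your treatment of the within-cell oscillation via the majorant $Y$ (exponential Markov bound as in Lemma \ref{lem:exp_mom}, union bound $j_n\exp(-c_\varepsilon(a(s_n))^{1/2})$, summable by (A5)) is exactly the paper's new ingredient. The genuine gap is your handling of the ``first term'': you claim that the Bernstein tail of $X(t_{k,n})-b(t_{k,n})$ at the critical level $(1+\varepsilon)\phi(a(s_n))$, multiplied by the $(j_n+1)$-fold union bound, is summable ``by the explicit choice of $v_n$''. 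It is not. In the case $\mu=1$ the individual tail is roughly $\exp(-(1+\varepsilon)^2(q+1)\log\log a(s_n))$, which by \eqref{eq:ineq10} is only polynomially small in $n$ (of order $n^{-(1+\varepsilon)^2(q+1)(1-\kappa)/(q_\varrho+1)}$), whereas $j_n\asymp v_{n+1}(\kappa,1)-v_n(\kappa,1)$ grows like $\exp(n^{(1-\kappa)/(q_\varrho+1)})$ up to polynomial factors, so the series diverges badly. In the case $\mu>1$ the tail is about $(a(s_n))^{-(1+\varepsilon)^2(\mu-1)}\lesssim n^{-(1+\varepsilon)^2(\mu-1)(1-\kappa)/(\mu_\varrho-1)}$ while $j_n\asymp n^{(1-\mu_\varrho\kappa)/(\mu_\varrho-1)}$; summability then forces $(1+\varepsilon)^2>\mu/(\mu-1)$, so $\varepsilon$ cannot be sent to $0$ and this route only yields $\limsup\le(\mu/(\mu-1))^{1/2}$. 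The point is that tails at the LIL-critical level are only barely summable (that is what dictates the choice of $v_n$ and the extra $\varrho$ in $\mu_\varrho,q_\varrho$) and cannot absorb a union bound over the whole partition.

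The paper circumvents this by applying the critical-level exponential bound only once per super-interval, at the point $s_n$ furnished by (A4) (Lemma \ref{inter1}, via the exponential-moment inequality and Lemma \ref{lem:aux1}(a)), and by showing that the entire oscillation $\sup_{u\in[t_n,t_{n+1}]}|X^\ast(u)-X^\ast(s_n)|$ is $o((a(s_n))^{1/2})$, hence negligible against the normalizer: the partition-point increments $\max_k|X^\ast(t_{k-1,n})-X^\ast(t_n)|$ are controlled by the Rosenthal bound of Lemma \ref{ineq1} combined with the Longnecker--Serfling maximal inequality (Lemmas \ref{lem:bor} and \ref{billappl}) and Lemma \ref{lem:aux1}(b) --- note that even at the coarse level $\varepsilon(a(s_n))^{1/2}$ a plain union bound over the $j_n$ points would fail when $\mu=1$, so the maximal inequality is essential --- while the within-cell part is your $Y$-increment estimate. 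A secondary inaccuracy: (A4) does not give $a(t_{k,n})\sim a(t_n)$; it only makes $s_n$ an approximate minimizer of $a_0$ on $[t_n,t_{n+1}]$. The comparability of the variances across the interval follows instead from $|a(t)-a(s)|\le f(t)-f(s)$ (a consequence of (A2a)) together with Lemma \ref{lem:aux1}(b); this is repairable, but it does not rescue the union-bound step, which is the step that has to be replaced by the one-point bound plus negligible-oscillation argument.
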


Our proof of Theorem \ref{thm:main} given in Section \ref{sec:main} is a modified version of the proof of Theorem 1.6 in \cite{Buraczewski+Iksanov+Kotelnikova:2023+}.

\section{Proof of Theorem \ref{thm:main}}

\subsection{Auxiliary results}

We start with a simple inequality which will be used in the last part of the proof of Proposition \ref{lilhalf1}.

\begin{lemma}\label{lem:exp_mom}
	Suppose (A2). Then, for $\vartheta \in\mr$ and $t> s\ge 0$,
	$$\me \exp(\vartheta (Y(t)-Y(s)))\le \exp((\eee^{\vartheta M}-1)(f(t)-f(s))).$$
\end{lemma}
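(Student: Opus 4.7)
The plan is to factor the moment generating function via independence and reduce to a pointwise chord inequality for a single bounded summand. First I would invoke the independence of $(\Phi_k)_{k\ge 1}$ provided by (A2), together with monotone convergence on the partial sums $\sum_{k\le N}(\Phi_k(t)-\Phi_k(s))$ (finite in $L^1$ by (A2b)), to obtain
$$
\me\exp\bigl(\vartheta(Y(t)-Y(s))\bigr) = \prod_{k\ge 1}\me\exp(\vartheta D_k), \qquad D_k := \Phi_k(t)-\Phi_k(s),
$$
where a.s.\ monotonicity of $\Phi_k$ together with the range condition in (A2) forces $D_k\in\{0,1,\ldots,M\}$.

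Next I would bound each factor. Since $x\mapsto e^{\vartheta x}$ is convex, for $\vartheta\ge 0$ it lies on $[0,M]$ below the chord through $(0,1)$ and $(M,e^{\vartheta M})$, so
$$
e^{\vartheta x} \le 1+\frac{e^{\vartheta M}-1}{M}\,x \le 1+(e^{\vartheta M}-1)\,x, \qquad x\in[0,M],
$$
the second inequality using $M\ge 1$. Applying this pointwise to $D_k$, taking expectation, and then invoking $1+y\le e^y$ yields
$$
\me e^{\vartheta D_k} \le 1+(e^{\vartheta M}-1)\me D_k \le \exp\bigl((e^{\vartheta M}-1)\me D_k\bigr).
$$
Multiplying over $k$ and using $\sum_{k\ge 1}\me D_k = \me Y(t)-\me Y(s) = f(t)-f(s)$ finishes the estimate.

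The argument is so short that there is no real obstacle; the only delicate point I would flag concerns the chord step. Discarding the factor $1/M$ in the bound $\tfrac{e^{\vartheta M}-1}{M}x \le (e^{\vartheta M}-1)x$ is legitimate for $\vartheta\ge 0$ but reverses sign when $\vartheta<0$ and $M\ge 2$. In that regime the same proof still delivers the sharper estimate $\me\exp(\vartheta(Y(t)-Y(s)))\le \exp\bigl(\tfrac{e^{\vartheta M}-1}{M}(f(t)-f(s))\bigr)$, which is equally suitable for the Chernoff-type arguments that Lemma~\ref{lem:exp_mom} is meant to feed into the proof of Proposition~\ref{lilhalf1}.
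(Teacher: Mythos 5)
Your argument is correct and is essentially the paper's proof: both factor the moment generating function by independence and then bound each factor through the pointwise inequality $\eee^{\vartheta(\Phi_k(t)-\Phi_k(s))}\le 1+(\eee^{\vartheta M}-1)(\Phi_k(t)-\Phi_k(s))$ followed by $1+y\le \eee^{y}$ and summation of $\me(\Phi_k(t)-\Phi_k(s))$ to $f(t)-f(s)$. The only difference is how the pointwise bound is obtained: the paper uses integer-valuedness, writing $\eee^{\vartheta(\Phi_k(t)-\Phi_k(s))}-1\le(\eee^{\vartheta M}-1)\1_{\{\Phi_k(t)-\Phi_k(s)>0\}}\le(\eee^{\vartheta M}-1)(\Phi_k(t)-\Phi_k(s))$, while you use the convexity chord; this is cosmetic.

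Your caveat about $\vartheta<0$ is well taken, and in fact the paper's own argument has the same restriction: the step $\eee^{\vartheta(\Phi_k(t)-\Phi_k(s))}-1\le \eee^{\vartheta M}-1$ on the event $\{\Phi_k(t)-\Phi_k(s)>0\}$ also needs $\vartheta\ge 0$, and the inequality as stated can genuinely fail for $\vartheta<0$ (e.g.\ Bernoulli increments with the constant $M=2$). Since the lemma is invoked only with $\vartheta=1$, in \eqref{eq:Y}, nothing is lost. One small correction to your side remark: for $\vartheta<0$ the bound $\exp\bigl(\tfrac{\eee^{\vartheta M}-1}{M}(f(t)-f(s))\bigr)$ produced by your proof is weaker, not sharper, than the stated one, because $(\eee^{\vartheta M}-1)/M\ge \eee^{\vartheta M}-1$ when $\eee^{\vartheta M}-1<0$; it is nonetheless a valid exponential bound and, as you say, adequate for the Chernoff-type use in Proposition \ref{lilhalf1}.
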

\begin{proof}
	For $k\in\mn$ and $t>s\ge 0$, $\1_{\{\Phi_k(t)-\Phi_k(s)>0\}}=\1_{\{\Phi_k(t)-\Phi_k(s)\ge 1\}}\le \Phi_k(t)-\Phi_k(s)$ a.s. The equality stems from the fact that $\Phi_k$ only takes nonnegative integer values.
	Hence, for $\vartheta\in\mr$ and $0\leq s<t$,
	\begin{multline*}
		\me \exp(\vartheta (Y(t)-Y(s))=\prod_{k\geq 1}\me \exp (\vartheta (\Phi_k(t)-\Phi_k(s)))\\= \prod_{k\geq 1}\big(1+\me(\eee^{\vartheta(\Phi_k(t)-\Phi_k(s))}-1)\1_{\{\Phi_k(t)-\Phi_k(s)>0\}}\big)\le \prod_{k\geq 1}\big(1+(\eee^{\vartheta M}-1)\me\1_{\{\Phi_k(t)-\Phi_k(s)>0\}}\big)\\\leq \exp\Big((\eee^{\vartheta M}-1)\sum_{k\geq 1}\me(\Phi_k(t)-\Phi_k(s))\Big)=
		\exp((\eee^{\vartheta M}-1)(f(t)-f(s))).
	\end{multline*}
\end{proof}

For each $B\geq 0$ and each $D>1$, put
\begin{equation}\label{eq:defg}
	g_{1,\,B}(t):=(B+1)\log\log t,\quad t>\eee\quad\text{and}\quad g_D(t):=(D-1)\log t,\quad t>1.
\end{equation}
Lemma \ref{lem:aux1} does two things. First, it explains the choice of the sequences $(t_n)$ and $(v_n)$ and the functions $g_{1,\,q_\varrho}$ and $g_{\mu_\varrho}$ (even though $(t_n)$ is not present in Lemma \ref{lem:aux1} explicitly, it is of crucial importance for defining the sequence $(s_n)$.) Second, it secures a successful application of the Borel-Cantelli lemma in the proof of Proposition \ref{lilhalf1}.

\begin{lemma}\label{lem:aux1}
	Suppose (A1), (A3) and (A4). Fix any $\varrho\in (0,1)$, any $\kappa\in (0,1)$ and let $q_\varrho$ and $\mu_\varrho$ be as defined in \eqref{eq:mutheta}.
	
	\noindent (a) If $\mu$ in \eqref{eq:infim} is equal to $1$, then $\exp(-g_{1,\,q_\varrho}(a(s_n(\kappa, 1))))=O(n^{-(1-\kappa)})$ as $n\to\infty$, and if $\mu>1$, then $\exp(-g_{\mu_\varrho}(a(s_n(\kappa, \mu))))=O(n^{-(1-\kappa)})$.
	
	\noindent (b) There exists an integer $r\geq 2$ such that $\big(\big((v_{n+1}(\kappa,\mu)-v_n(\kappa, \mu))/a(s_n)\big)^r\big)$ is a summable sequence.
\end{lemma}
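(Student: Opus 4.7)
Both parts hinge on two-sided control of $a(s_n)$, which will be extracted from the defining relation $t_n = \inf\{t:f(t)>v_n(\kappa,\mu)\}$ together with the growth condition on $f$ versus $a$ supplied by (A3). Since $Y=\sum_k \Phi_k$ is a.s.\ nondecreasing, so is $f$, and hence $s_n \ge t_n$ yields $f(s_n) \ge v_n(\kappa,\mu)$ up to a bounded additive jump at $t_n$ (controlled by the partition of (A5)); in particular $f(s_n) \ge \tfrac{1}{2} v_n(\kappa,\mu)$ for all large $n$. I treat $\mu>1$ and $\mu=1$ separately, because the inversion $f \mapsto a$ takes qualitatively different forms in the two regimes.

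\textbf{Case $\mu>1$.} Since $\mu_\varrho=\mu+\varrho>\mu$, (A3) gives $f(t)\le C a(t)^{\mu_\varrho}$ for all large $t$, so that $a(s_n) \ge c\, n^{(1-\kappa)/(\mu_\varrho-1)}$. For part (a), $g_{\mu_\varrho}(u) = (\mu_\varrho-1)\log u$, so the substitution $\exp(-g_{\mu_\varrho}(a(s_n))) = a(s_n)^{-(\mu_\varrho-1)}$ yields at once the bound $O(n^{-(1-\kappa)})$. For part (b), the mean value theorem gives $v_{n+1}-v_n = O(n^{\alpha-1})$ with $\alpha = \mu_\varrho(1-\kappa)/(\mu_\varrho-1)$; a short arithmetic check shows the exponents collapse to $(v_{n+1}-v_n)/a(s_n) = O(n^{-\kappa})$, and any integer $r > 1/\kappa$ with $r\ge 2$ makes the $r$-th power summable.

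\textbf{Case $\mu=1$.} Use \eqref{eq:infim2} in the form $f(t) \le C a(t) (\log a(t))^{q_\varrho}$ for large $t$, where the slowly varying $\mathcal{L}(\log a(t))$ is absorbed into $(\log a(t))^\varrho$ by Potter's bound. Set $L := \log a(s_n)$ and $\beta := (1-\kappa)/(q_\varrho+1)$. The resulting transcendental inequality $e^L L^{q_\varrho} \ge v_n/C = c\exp(n^\beta)$ forces $L \ge c' n^\beta$ for all large $n$: otherwise $L < n^\beta/2$ would give $e^L L^{q_\varrho} \le n^{O(1)} v_n^{1/2} = o(v_n)$, a contradiction. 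Part (a) then follows from $\exp(-g_{1,q_\varrho}(a(s_n))) = L^{-(q_\varrho+1)} = O(n^{-(1-\kappa)})$. For part (b), the MVT gives $v_{n+1}-v_n \sim \beta n^{\beta-1} v_n$; the matching upper bound $L \le C n^\beta$, obtained from $a(s_n) \le f(s_n) \le f(t_{n+1}) \le v_{n+1}+O(1)$ via (A5), converts $a(s_n) \ge v_n/(C L^{q_\varrho})$ into $(v_{n+1}-v_n)/a(s_n) = O(n^{\beta(q_\varrho+1)-1}) = O(n^{-\kappa})$, and summability follows as before.

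\textbf{Main obstacle.} The $\mu=1$ branch, where $f/a$ grows only logarithmically in $a$. There the lower bound on $a(s_n)$ is only implicitly defined and must be extracted by a dedicated contradiction argument; moreover, part (b) additionally requires a matching upper bound on $L$ that leans on the bounded-jump partition furnished by (A5) to control $f(t_{n+1})$.
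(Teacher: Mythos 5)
Your part (a) in both regimes and your part (b) for $\mu>1$ follow the paper's own route almost exactly: bound $f(s_n)$ from below by $v_n(\kappa,\mu)$ (up to a constant) via the definition of $t_n$ and the monotonicity of $f$, invert through (A3) (absorbing $\mathcal{L}$ into the extra $\varrho$, as the paper does via $z_q(t)=O((\log t)^{q_\varrho})$) to get a lower bound on $a(s_n)$, and compare with the mean-value estimate for $v_{n+1}-v_n$ to get the $O(n^{-\kappa})$ bound, then pick an integer $r\ge 2$ with $r\kappa>1$. Those computations are correct and match the paper.

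The gap is in part (b) for $\mu=1$. The delicate point there is the matching upper bound $\log a(s_n)=O(n^{(1-\kappa)/(q_\varrho+1)})$, and you obtain it from ``$f(t_{n+1})\le v_{n+1}+O(1)$ via (A5)''; your Plan likewise invokes the partition of (A5) for the lower bound $f(s_n)\ge\tfrac12 v_n$ (there monotonicity of $f$ is all that is needed, so that appeal is merely superfluous). But (A5) is not among the hypotheses of Lemma \ref{lem:aux1}, which assumes only (A1), (A3) and (A4). The paper gets the upper bound from the extra clause built into (A3) precisely for the case $\mu=1$: if $f$ is eventually continuous, then $f(t_{n+1})=v_{n+1}$ and $\log a(s_n)\le \log f(s_n)\le\log f(t_{n+1})=(n+1)^{(1-\kappa)/(q_\varrho+1)}$; otherwise ${\lim\inf}_{t\to\infty}(\log f(t-1)/\log f(t))>0$ together with $f(t_{n+1}-1)\le v_{n+1}$ yields $\log f(t_{n+1})=O((n+1)^{(1-\kappa)/(q_\varrho+1)})$. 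Your argument never uses these conditions, which sit in (A3) exactly for this step, and instead imports an assumption the lemma does not grant. This is not only a mismatch with the stated hypotheses: Remark \ref{suff} uses Lemma \ref{lem:aux1}(b) to verify the summability requirement of (A5), so a proof of part (b) that presupposes (A5) would be circular in that application. The repair is straightforward (replace the appeal to (A5) by the case distinction in (A3)), but as written the $\mu=1$ half of part (b) is not established under the lemma's assumptions.
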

\begin{proof}
	
	\noindent (a) Using the definition of $t_n$, the fact that $f$ is nondecreasing and (A3), we conclude that
	\begin{equation}\label{eq:ineq10}
		\exp(n^{(1-\kappa)/(q_\varrho+1)})\leq f(t_n(\kappa, 1))\le f(s_n(\kappa, 1))=O(a(s_n(\kappa, 1))z_q(a(s_n(\kappa, 1)))),\quad n\to\infty
	\end{equation}
	and, for $\mu>1$,
	\begin{equation}\label{eq:ineq11}
		n^{\mu_\varrho(1-\kappa)/(\mu_\varrho-1)}\leq f(t_n(\kappa,\mu))\leq f(s_n(\kappa, \mu))=O((a(s_n(\kappa,\mu)))^{\mu_\varrho}),\quad n\to\infty.
	\end{equation}
	Since $\lim_{t\to\infty}(\log z_q(t)/\log t)=0$ we infer $$\exp(-g_{1,\,q_\varrho}(a(s_n(\kappa, 1))))=(\log a(s_n(\kappa, 1)))^{-(q_\varrho+1)}=O(n^{-(1-\kappa)}),\quad n\to\infty.$$ Also, for $\mu>1$, $$\exp(-g_{\mu_\varrho}(a(s_n(\kappa, \mu))))=(a(s_n(\kappa,\mu)))^{-(\mu_\varrho-1)}=O(n^{-(1-\kappa)}),\quad n\to\infty.$$
	
	\noindent (b) We start by proving that (A3) with $\mu=1$ entails
	\begin{equation}\label{eq:ineqlog}
		\log a(s_n(\kappa, 1))=O(n^{(1-\kappa)/(q_\varrho+1)}),\quad n\to\infty.
	\end{equation}
	Assume that $f$ is eventually continuous. Then $f(t_n(\kappa, 1))=v_n(\kappa, 1)$ for large enough $n$ and thereupon $\log a(s_n(\kappa, 1))\leq \log f(s_n(\kappa, 1))\le \log f(t_{n+1}(\kappa, 1))= (n+1)^{(1-\kappa)/(q_\varrho+1)}$ for large~$n$. Assuming that ${\lim\inf}_{t\to\infty}(\log f(t-1)/\log f(t))>0$ we obtain \eqref{eq:ineqlog} as a consequence of
	$\log f(t_{n+1}(\kappa, 1)-1) \leq (n+1)^{(1-\kappa)/(q_\varrho+1)}$ and $\log a(s_n(\kappa, 1))\leq \log f(s_n(\kappa, 1))\le \log f(t_{n+1}(\kappa, 1))$.
	
	We proceed by noting that, as $n\to\infty$,
	\begin{multline*}
		v_{n+1}(\kappa,1)-v_n(\kappa, 1)=\exp((n+1)^{(1-\kappa)/(q_\varrho+1)})-\exp(n^{(1-\kappa)/(q_\varrho+1)})\\~\sim~((1-\kappa)/(q_\varrho+1))n^{((1-\kappa)/(q_\varrho+1))-1}\exp(n^{(1-\kappa)/(q_\varrho+1)})
	\end{multline*}
	and, for $\mu>1$,
	\begin{multline*}
		v_{n+1}(\kappa,\mu)-v_n(\kappa, \mu)=(n+1)^{\mu_\varrho(1-\kappa)/(\mu_\varrho-1)}-n^{\mu_\varrho(1-\kappa)/(\mu_\varrho-1)}\\~\sim~ (\mu_\varrho(1-\kappa)/(\mu_\varrho-1))n^{(1-\mu_\varrho\kappa)/(\mu_\varrho-1)}.
	\end{multline*}
	Write
	\begin{multline*}
		\frac{1}{a(s_n(\kappa, 1))}=O((\log a(s_n(\kappa, 1)))^{q_\varrho}\exp(-n^{(1-\kappa)/(q_\varrho+1)}))\\=O(n^{q_\varrho(1-\kappa)/(q_\varrho+1)}\exp(-n^{(1-\kappa)/(q_\varrho+1)})),\quad n\to\infty.
	\end{multline*}
	Here, the first equality is implied by $z_q(t)=O((\log t)^{q_\varrho})$ as $t\to\infty$ and \eqref{eq:ineq10}, and the second equality is a consequence of \eqref{eq:ineqlog}. In the case $\mu>1$, invoking~\eqref{eq:ineq11} we infer
	\begin{equation*}
		\frac{1}{a(s_n(\kappa,\mu))}=O(n^{-(1-\kappa)/(\mu_\varrho-1)})),\quad n\to\infty.
	\end{equation*}
	Thus, we have proved that,
	for $\mu\geq 1$, $$\frac{v_{n+1}(\kappa,\mu)-v_n(\kappa, \mu)}{a(s_n)}=O(n^{-\kappa}),\quad n\to\infty.$$
	Choosing any integer $r\geq 2$ satisfying $r\kappa>1$ completes the proof of part~(b).
\end{proof}

For $k\in\mn$ and $t\geq 0$, put $X^\ast(t):=X(t)-\me X(t)$ and $\eta_k(t):=\1_{A_k(t)}-\mmp(A_k(t))$. Note that
\begin{equation*}
	X^\ast(t)=\sum_{k\geq 1} \eta_k(t),\quad t\geq 0
\end{equation*}
and that $\eta_1(t)$, $\eta_2(t),\ldots$ are independent centered random variables.

Lemma \ref{ineq1} provides a uniform bound for higher moments of the increments of $X^\ast$. The bound serves a starting point of the chaining argument in the spirit of Lemma \ref{lem:bor}. A result of an application of Lemma \ref{lem:bor} to the present setting is given in Lemma \ref{billappl}.

\begin{lemma}\label{ineq1}
	Suppose (A2). Let $r\in\mn$ and $t,s\geq 0$. Then
	\begin{equation}\label{eq:5}
		\me (X^\ast(t)-X^\ast(s))^{2r}\leq D_r \max(|f(t)- f(s)|^r, |f(t)-f(s)|)
	\end{equation}
	for a positive constant $D_r$ which does not depend on $t$ and $s$.
\end{lemma}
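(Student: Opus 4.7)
The proof plan rests on recognizing that $X^\ast(t)-X^\ast(s)=\sum_{k\ge 1}Z_k$ with $Z_k:=\eta_k(t)-\eta_k(s)$ is a sum of \emph{independent} centered random variables, so the natural tool is Rosenthal's moment inequality. The role of assumption (A2) is to supply the two ingredients that Rosenthal needs: a uniform almost sure bound on each summand $Z_k$, and a bound on $\sum_k \mathbb{E} Z_k^2$ by the increment of $f$.

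First I would reduce to the case $t\ge s$ by symmetry. Then I would collect the three standard estimates. By (A2a) and the $\{0,1,\ldots,M\}$-valuedness of $\Phi_k$, one has $|\mathbbm{1}_{A_k(t)}-\mathbbm{1}_{A_k(s)}|\le\Phi_k(t)-\Phi_k(s)\le M$ a.s.; taking expectations gives the same bound for $|\mathbb{P}(A_k(t))-\mathbb{P}(A_k(s))|$ by Jensen, so $|Z_k|\le 2M$ a.s. For second moments, since $|\mathbbm{1}_{A_k(t)}-\mathbbm{1}_{A_k(s)}|\in\{0,1\}$ equals its own square,
$$\mathbb{E} Z_k^2 \le \mathbb{E}(\mathbbm{1}_{A_k(t)}-\mathbbm{1}_{A_k(s)})^2 = \mathbb{E}|\mathbbm{1}_{A_k(t)}-\mathbbm{1}_{A_k(s)}| \le \mathbb{E}(\Phi_k(t)-\Phi_k(s)),$$
and summing in $k$ yields $\sum_k \mathbb{E} Z_k^2 \le f(t)-f(s)$ by (A2b). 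The uniform bound then promotes this to $\sum_k \mathbb{E}|Z_k|^{2r}\le (2M)^{2r-2}(f(t)-f(s))$ for any $r\in\mathbb{N}$.

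The final step is to apply Rosenthal's inequality in its standard form
$$\mathbb{E}\Big(\sum_k Z_k\Big)^{2r} \le C_r \max\Big(\Big(\sum_k \mathbb{E} Z_k^2\Big)^{r},\ \sum_k \mathbb{E} |Z_k|^{2r}\Big),$$
and substitute the two bounds above; this produces exactly the max of $(f(t)-f(s))^r$ and $(2M)^{2r-2}(f(t)-f(s))$, so setting $D_r:=C_r\max(1,(2M)^{2r-2})$ gives \eqref{eq:5}. An alternative, if one prefers to avoid invoking Rosenthal as a black box, is to expand $\mathbb{E}(\sum_k Z_k)^{2r}$ multinomially, use centeredness of each $Z_k$ to discard all multi-indices containing an exponent $1$, and bound the surviving terms by partitioning according to the number $m$ of distinct indices (with $1\le m\le r$); the variance bound controls the $m=r$ contribution, the uniform bound the $m=1$ contribution, and an interpolation handles the rest.

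The main obstacle is essentially bookkeeping of constants rather than any genuine difficulty: none of the estimates requires the monotonicity-in-$t$ assumption that was used elsewhere, which is precisely why this moment bound will serve as the starting point for the chaining argument needed to handle the non-monotone case. One should also be careful that the case $f(t)-f(s)<1$ is genuinely present (which is why the max, rather than just $|f(t)-f(s)|^r$, appears in the conclusion).
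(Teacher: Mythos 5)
Your proposal is correct and follows essentially the same route as the paper: write the increment as a sum of independent centered variables, apply Rosenthal's inequality, and use (A2) to bound both the sum of variances and the sum of $2r$-th moments by a multiple of $f(t)-f(s)$. The only differences are cosmetic bookkeeping — the paper bounds $\sum_k\me|\eta_k(s,t)|^{2r}\le 2^{2r}(f(t)-f(s))$ directly from the $\{0,1\}$-valuedness of $|\1_{A_k(t)}-\1_{A_k(s)}|$, so its constant $D_r=2^{2r}C_r$ does not involve $M$, and it explicitly notes that the case $r=1$ holds trivially with $C_1=1$ (standard Rosenthal is stated for moments of order $>2$), a remark you may wish to add.
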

\begin{proof}
	In view of the representation
	\begin{equation*}
		X^\ast(t)-X^\ast(s)=\sum_{k\geq 1}(\1_{A_k(t)}-\mmp(A_k(t))-\1_{A_k(s)}+\mmp(A_k(s)))=:\sum_{k\geq 1}\eta_k(s,t),
	\end{equation*}
	the variable $X^\ast(t)-X^\ast(s)$ is an infinite sum of independent centered random variables with finite
	moment of order $2r$.
	
	Invoking Rosenthal's inequality (Theorem 3 in \cite{Rosenthal:1970}) in the case $r\geq 2$ we infer
	$$\me (X^\ast(t)-X^\ast(s))^{2r}\leq C_r \max\Big(\Big(\sum_{k\geq 1}\me (\eta_k(s,t))^2\Big)^r, \sum_{k\geq 1}\me (\eta_k(s,t))^{2r}\Big).$$ In the case $r=1$, the inequality trivially holds with $C_1=1$ as is seen from
	$$\me (X^\ast(t)-X^\ast(s))^2= \sum_{k\geq 1}\me (\eta_k(s,t))^2.$$
	In view of (A2), for $r\in\mn$ and $0\le s< t$,
	\begin{multline*}
		\sum_{k\geq 1} \me (\eta_k(s,t))^{2r}\le 2^{2r-1} \sum_{k\geq 1} \Big(\me (\1_{A_k(t)}-\1_{A_k(s)})^{2r}+(\mmp(A_k(t))-\mmp(A_k(s)))^{2r}\Big)\\\le 2^{2r-1} \sum_{k\in\mn} \Big(\me |\1_{A_k(t)}-\1_{A_k(s)}|+|\mmp(A_k(t))-\mmp(A_k(s))|\Big)\\\le 2^{2r}\sum_{k\geq 1} \me (\Phi_k(t)-\Phi_k(s))=2^{2r}(f(t)-f(s)).
	\end{multline*}
	Here, we have used $(a+b)^{2r}\le 2^{2r-1}(a^{2r}+b^{2r})$, $a,b\in\mr$ for the first inequality, the fact that $|\1_{A_k(t)}-\1_{A_k(s)}|\in \{0,1\}$ a.s.\ and $|\mmp(A_k(t))-\mmp(A_k(s))|\in [0,1]$ for the second and~(A2a) for the third. The argument for the case $0\leq t<s$ is analogous.
	
	Combining fragments together we conclude that \eqref{eq:5} holds with $D_r:=2^{2r}C_r$.
\end{proof}

The next result is borrowed from Lemma 2 in \cite{Longnecker+Serfling:1977}.
\begin{lemma}\label{lem:bor}
	Let $\xi_1$, $\xi_2,\ldots$ be random variables. Fix any $m\in\mn$ and assume that
	$$\me|\xi_{i+1}+\ldots+\xi_k|^{\lambda_1}\leq (u_{i+1}+\ldots+ u_k)^{\lambda_2},\quad 0\leq i<k\leq m$$
	for some $\lambda_1>0$, some $\lambda_2>1$ and some nonnegative numbers $u_1,\ldots, u_m$. Then
	$$\me(\max_{1\leq k\leq m}|\xi_1+\ldots+\xi_k|)^{\lambda_1}\leq A_{\lambda_1,\lambda_2}(u_1+\ldots+u_m)^{\lambda_2}$$
	for a positive constant $A_{\lambda_1,\lambda_2}$.
\end{lemma}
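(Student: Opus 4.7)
The plan is to reproduce the classical Móricz-type dyadic chaining argument underlying Longnecker--Serfling's Lemma~2. Since the authors attribute the statement directly to that paper, a one-line citation would suffice in practice, but here is how I would derive it myself.

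First I would reduce to $m = 2^N$ by padding the sequence with zeros, which affects neither $\max_k |\xi_1+\ldots+\xi_k|$ nor $U_m := u_1+\ldots+u_m$. For each dyadic level $\ell \in \{0,1,\ldots,N\}$ and block index $j \in \{1,\ldots, 2^{N-\ell}\}$ I introduce the block sums $T_{\ell,j} := \xi_{(j-1)2^\ell+1}+\ldots+\xi_{j\cdot 2^\ell}$ together with their $u$-analogues $U_{\ell,j}$. The binary expansion of any $k \leq m$ lets one write each partial sum $S_k := \xi_1+\ldots+\xi_k$ as a telescoping sum of at most $N+1$ such block sums, one per level. By hypothesis $\me |T_{\ell,j}|^{\lambda_1} \leq U_{\ell,j}^{\lambda_2}$, and since $\lambda_2 > 1$ and the $U_{\ell,j}$ at fixed $\ell$ are nonnegative and sum to $U_m$, superadditivity of $x\mapsto x^{\lambda_2}$ yields
\begin{equation*}
\me (\max_j |T_{\ell,j}|)^{\lambda_1} \leq \sum_j \me |T_{\ell,j}|^{\lambda_1} \leq \sum_j U_{\ell,j}^{\lambda_2} \leq U_m^{\lambda_2},
\end{equation*}
uniformly in $\ell$.

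The main obstacle is now to aggregate these $N+1$ level bounds into a single estimate without picking up a spurious $(\log m)^{\lambda_1}$ factor, which a direct triangle or Minkowski bound would produce. To avoid this I would follow Móricz's recursion: setting $M(m) := \me (\max_{1\leq k\leq m} |S_k|)^{\lambda_1}$ and splitting at $k^\ast = m/2$, one has
\begin{equation*}
\max_{1\leq k\leq m} |S_k| \leq \max_{1\leq k\leq k^\ast} |S_k| + |S_{k^\ast}| + \max_{k^\ast < k\leq m} |S_k - S_{k^\ast}|,
\end{equation*}
whose $\lambda_1$-th moment is handled by applying the hypothesis to the middle and last terms and by induction on the first. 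Iterating across dyadic levels produces a geometric series of ratio essentially $2^{-(\lambda_2-1)/\max(\lambda_1,1)}$, which converges precisely because of the strict inequality $\lambda_2 > 1$, yielding a finite constant $A_{\lambda_1,\lambda_2}$ independent of $m$. For $\lambda_1 \geq 1$ I would use Minkowski's inequality at the aggregation step, and for $0 < \lambda_1 < 1$ I would instead use subadditivity of $x\mapsto x^{\lambda_1}$.

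The hard step is exactly this elimination of the logarithmic factor: the strict inequality $\lambda_2 > 1$ is indispensable, and geometric summability over dyadic levels is what distinguishes the Móricz/Serfling inequality from routine chaining bounds. Everything else is bookkeeping, and an explicit expression for $A_{\lambda_1,\lambda_2}$ is not needed for the downstream application to Lemma~\ref{billappl} and Proposition~\ref{lilhalf1}.
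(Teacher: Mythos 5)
The paper does not prove this lemma at all: it is quoted verbatim from Lemma~2 of Longnecker and Serfling (1977), so the citation you mention at the outset is exactly what the authors do. Judged as a self-contained proof, however, your sketch has a genuine gap at precisely the step you identify as the hard one. Your preliminary dyadic computation only gives the level bounds $\me(\max_j|T_{\ell,j}|)^{\lambda_1}\le U_m^{\lambda_2}$ \emph{uniformly} in $\ell$, with no decay, and the recursion you then invoke to remove the logarithm, splitting at the index midpoint $k^\ast=m/2$, does not create the decay either: the geometric ratio in the M\'oricz argument comes from halving the \emph{weight} $U_m=u_1+\ldots+u_m$, not from halving the number of summands. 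The lemma allows arbitrary nonnegative $u_i$, so after an index-midpoint split one half may still carry essentially the whole weight $U_m$; then every one of the $\log_2 m$ dyadic levels contributes a term of order $U_m^{\lambda_2/\lambda_1}$, and unrolling your recursion with Minkowski (or with subadditivity when $\lambda_1<1$) reproduces exactly the $(\log m)^{\lambda_1}$ factor you set out to eliminate. There is no ratio ``essentially $2^{-(\lambda_2-1)/\max(\lambda_1,1)}$'' in this generality.

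The correct argument splits at the weight median rather than the index median: choose $h$ with $u_1+\ldots+u_h\le U_m/2$ and $u_{h+2}+\ldots+u_m\le U_m/2$, bound the partial sums beyond $h$ by $|S_{h+1}|$ plus a maximum over the right block, apply the hypothesis to the middle term and the induction hypothesis to the two outer blocks, whose weights are now at most $U_m/2$. Each recursion step then trades a factor $2$ (two blocks) against $2^{-\lambda_2}$ (weight halving raised to the power $\lambda_2$), and $2^{1-\lambda_2}<1$ — this is where $\lambda_2>1$ genuinely enters — yields a convergent series and an $m$-free constant $A_{\lambda_1,\lambda_2}$, after the cross terms are absorbed via a weighted Young-type inequality for $\lambda_1\ge 1$ or subadditivity of $x\mapsto x^{\lambda_1}$ for $\lambda_1<1$. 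Incidentally, in the application in Lemma~\ref{billappl} your index-based split would be harmless, because there the weights satisfy $1\le u_k/D_r^{1/r}\le A$ and are therefore comparable; but the lemma as stated requires the splitting to be governed by the $u_i$, as in Longnecker and Serfling's proof.
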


\begin{lemma}\label{billappl}
	Suppose (A2) and (A5). Then, for any integer $r\geq 2$, there exists a positive constant $A_r$ such that
	\begin{equation}\label{ineq4}
		\me (\max_{1\leq k\leq j}\,|X^\ast(t_{k-1,\,n})-X^\ast(t_n)|)^{2r}\leq A_r (v_{n+1}(\kappa,\mu)-v_n(\kappa,\mu))^r.
	\end{equation}
	Here, $j$ and $(t_{k,\,n})_{0\leq k\leq j}$ are as defined in (A5), and $v_n(\kappa,\mu)$ is as defined in~\eqref{tn}.
\end{lemma}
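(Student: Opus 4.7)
The plan is to combine the moment bound from Lemma~\ref{ineq1} with the Longnecker--Serfling maximal inequality (Lemma~\ref{lem:bor}). First, I would telescope by writing
$$X^\ast(t_{k,\,n})-X^\ast(t_n) = \xi_1+\cdots+\xi_k,\qquad \xi_l := X^\ast(t_{l,\,n})-X^\ast(t_{l-1,\,n}),$$
so that a partial sum $\xi_{i+1}+\cdots+\xi_k$ equals the increment $X^\ast(t_{k,\,n})-X^\ast(t_{i,\,n})$.

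Second, I would apply Lemma~\ref{ineq1} to each such increment. The partition condition from (A5), namely $1\le f(t_{l,\,n})-f(t_{l-1,\,n})\le A$ for $1\le l\le j$, summed telescopically gives $k-i\le f(t_{k,\,n})-f(t_{i,\,n})\le A(k-i)$; in particular this quantity is $\geq 1$, so the maximum in Lemma~\ref{ineq1} is attained by the $r$-th power, yielding
$$\me|\xi_{i+1}+\cdots+\xi_k|^{2r}\le D_r\,(f(t_{k,\,n})-f(t_{i,\,n}))^r\le D_r A^r (k-i)^r.$$
This is precisely of the form required by Lemma~\ref{lem:bor} with $\lambda_1=2r$, $\lambda_2=r>1$ (since $r\ge 2$), and constant weights $u_l := (D_r A^r)^{1/r}$. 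Lemma~\ref{lem:bor} then produces
$$\me\bigl(\max_{1\le k\le j}|X^\ast(t_{k,\,n})-X^\ast(t_n)|\bigr)^{2r}\le A_{2r,\,r}(u_1+\cdots+u_j)^r = A_{2r,\,r}\,D_r A^r\, j^r,$$
and since the $k=0$ term in the maximum on the left-hand side of \eqref{ineq4} equals $0$, this same bound controls $\me(\max_{1\le k\le j}|X^\ast(t_{k-1,\,n})-X^\ast(t_n)|)^{2r}$.

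Third, I would convert $j=j_n$ into $v_{n+1}(\kappa,\mu)-v_n(\kappa,\mu)$. Summing the lower bound $f(t_{l,\,n})-f(t_{l-1,\,n})\ge 1$ across the partition gives $j\le f(t_{n+1})-f(t_n)$. By the definition of $t_n$ as an infimum and monotonicity of $f$, one has $f(t_n)\ge v_n(\kappa,\mu)$. On the other hand, since $t_{j-1,\,n}<t_{n+1}$, the defining property of $t_{n+1}$ forces $f(t_{j-1,\,n})\le v_{n+1}(\kappa,\mu)$; combined with $f(t_{j,\,n})-f(t_{j-1,\,n})\le A$ this yields $f(t_{n+1})\le v_{n+1}(\kappa,\mu)+A$. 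Therefore $j_n\le (v_{n+1}(\kappa,\mu)-v_n(\kappa,\mu))+A$, and since $v_{n+1}-v_n\to\infty$ (as verified in the proof of Lemma~\ref{lem:aux1}(b)), for all sufficiently large $n$ we have $j_n\le 2(v_{n+1}(\kappa,\mu)-v_n(\kappa,\mu))$. Absorbing all multiplicative constants into a single $A_r:=2^r A_{2r,\,r}D_r A^r$ (and enlarging $A_r$ to cover the finitely many small~$n$) completes the proof of \eqref{ineq4}.

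The only mildly delicate point is the final step, where care is needed with potential discontinuities of $f$ at the boundary points $t_n,t_{n+1}$; but the two-sided estimate on the partition mesh built into (A5) is exactly what forces $j_n$ to differ from $v_{n+1}-v_n$ by at most an additive constant, and that is enough.
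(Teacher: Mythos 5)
Your route is in essence the paper's: telescope over the (A5)-partition, bound the moments of the increments by Lemma~\ref{ineq1} (with the lower bound $f(t_{k,\,n})-f(t_{i,\,n})\ge k-i\ge 1$ making the $r$-th power dominate the maximum), and feed this into the Longnecker--Serfling inequality of Lemma~\ref{lem:bor}. The genuine weak point is your final conversion of $j_n$ into $v_{n+1}(\kappa,\mu)-v_n(\kappa,\mu)$. You justify $j_n\le 2(v_{n+1}-v_n)$ for large $n$ by the claim that $v_{n+1}(\kappa,\mu)-v_n(\kappa,\mu)\to\infty$ ``as verified in Lemma~\ref{lem:aux1}(b)''. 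That is not what the proof of Lemma~\ref{lem:aux1}(b) shows, and it is false in general: for $\mu>1$ one has $v_{n+1}(\kappa,\mu)-v_n(\kappa,\mu)\sim \frac{\mu_\varrho(1-\kappa)}{\mu_\varrho-1}\,n^{(1-\mu_\varrho\kappa)/(\mu_\varrho-1)}$, and the exponent is negative whenever $\kappa>1/\mu_\varrho$, in which case the differences tend to $0$ and your bound $j_n\le (v_{n+1}-v_n)+A$ gives nothing of the required form. Since the sequences $(t_n)$, $(v_n)$ are defined for an arbitrary fixed $\kappa\in(0,1)$ (the restriction to small $\kappa$ only appears later, in the proof of Proposition~\ref{lilhalf1}), the lemma as stated is not covered by your argument.

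The repair is immediate from estimates you already have: if $j\ge 2$, summing the lower bounds of (A5) over the first $j-1$ subintervals gives $j-1\le f(t_{j-1,\,n})-f(t_n)\le v_{n+1}(\kappa,\mu)-v_n(\kappa,\mu)$ (using $f(t_{j-1,\,n})\le v_{n+1}$ and $f(t_n)\ge v_n$, exactly as you argue), hence $j\le 2(v_{n+1}-v_n)$ with no limit claim; if $j=1$ the maximum in \eqref{ineq4} vanishes and the bound is trivial. Better still, the detour through constant weights is avoidable altogether: taking $u_l:=D_r^{1/r}(f(t_{l,\,n})-f(t_{l-1,\,n}))$, as the paper does, the hypothesis of Lemma~\ref{lem:bor} holds directly by Lemma~\ref{ineq1}, the weights telescope to $D_r^{1/r}(f(t_{j-1,\,n})-f(t_n))\le D_r^{1/r}(v_{n+1}-v_n)$, and \eqref{ineq4} follows with $A_r=A_{2r,\,r}D_r$ --- uniformly in $n$, without the constant $A$, without case distinctions, and without enlarging the constant over an initial segment (your $A_r$ additionally depends on $A$, $\kappa$, $\varrho$ and the small-$n$ adjustment, which is harmless for the Borel--Cantelli application but unnecessary).
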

\begin{proof}
	We first show that the assumption of Lemma \ref{lem:bor} holds
	with $\lambda_1=2r$, $\lambda_2=r$, $m=j-1$, $\xi_k:=X^\ast(t_{k,\,n})-X^\ast(t_{k-1,\,n})$ and $u_k:=D_r^{1/r}(f(t_{k,\,n})-f(t_{k-1,\,n}))$ for $k\in\mn$, where $D_r$ is the constant defined in Lemma \ref{ineq1}. Let $0\leq i<k\leq j-1$. By (A5),
	$f(t_{k,\,n})-f(t_{i,\,n})=\sum_{l=i+1}^k (f(t_{l,\,n})-f(t_{l-1,\,n}))\geq 1$. This in combination with
	Lemma \ref{ineq1} yields
	\begin{multline*}
		\me |\xi_{i+1}+\ldots+\xi_k|^{2r}=\me(X^\ast(t_{k,\,n})-X^\ast(t_{i,\,n}))^{2r}\\\leq D_r \max((f(t_{k,\,n})-f(t_{i,\,n}))^r, f(t_{k,\,n})-f(t_{i,\,n}))=D_r (f(t_{k,\,n})-f(t_{i,\,n}))^r=\Big(\sum_{l=i+1}^k u_l\Big)^r,
	\end{multline*}
	thereby proving that the assumption of Lemma \ref{lem:bor} does indeed hold.
	Hence, inequality~\eqref{ineq4} follows from Lemma \ref{lem:bor} and
	the definition of $t_n$:
	\begin{multline*}
		\me (\max_{1\leq k\leq j}\,|X^\ast(t_{k-1,\,n})-X^\ast(t_n)|)^{2r}=\me (\max_{1\leq k\leq j-1}|\xi_1+\ldots+\xi_k|)^{2r}\leq A_{2r,\,r}\Big(\sum_{l=1}^{j-1} u_l\Big)^r\\=A_{2r,\,r} D_r (f(t_{j-1,\,n})-f(t_n))^r\leq A_{2r,\,r} D_r (v_{n+1}(\kappa,\mu)-v_n(\kappa,\mu))^r.
	\end{multline*}
\end{proof}

\subsection{Proof of Theorem \ref{thm:main}}\label{sec:main}

We start with a lemma and a proposition which are in essence Lemma 4.13 and Proposition 4.7 in \cite{Buraczewski+Iksanov+Kotelnikova:2023+}. Although our present assumption (A3) is slightly different from the corresponding assumption in \cite{Buraczewski+Iksanov+Kotelnikova:2023+}, we have checked that the proofs of the aforementioned results in \cite{Buraczewski+Iksanov+Kotelnikova:2023+} go through.
\begin{lemma}\label{lem:new}
	Suppose (A1), (A3), (B1) and either (B21) or (B22) and let $\mu\geq 1$ be as given in \eqref{eq:infim}. For sufficiently small $\delta >0$, pick $\gamma\in (0, (\sqrt{5}-1)/2) >0$ satisfying $(1+\gamma)(1-\delta^2/8)<1$. Then
	$$\limsup_{n\to \infty} (\liminf_{n\to\infty}) \frac 1{(2a(\nu_n )h_0(a(\nu_n))^{1/2}} \sum_{k\ge 1} \eta_k(\nu_n ) \ge 1-\delta~(\leq -(1-\delta)) \quad\text{{\rm a.s.}},$$ where $\nu_n$ is either $\tau_n$ or $\lfloor \tau_n\rfloor$, and $\tau_n=\tau_n(\gamma,\mu)$, with $\gamma$ chosen above, is as defined in~\eqref{eq:tau}.
\end{lemma}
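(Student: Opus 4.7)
\medskip

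The plan is to follow the independent-events strategy used for the corresponding result in Buraczewski, Iksanov and Kotelnikova (2023+), adapting the bookkeeping to the present form of (A3). Write $Z_n := \sum_{k\in R(\tau_n)} \eta_k(\tau_n)$, where $R(\cdot) = R_\varsigma(\cdot)$ under (B21) or $R_0(\cdot)$ under (B22). Because the families $(A_k(t))_{t\ge 0}$ are independent in $k$, and because the disjointness clauses in (B21) / (B22) guarantee that, for $n\ge n_0(\gamma)$ (resp.\ $n_0(\varsigma,\gamma)$), the sets $R(\tau_n)$ are pairwise disjoint, the sequence $(Z_n)_{n\ge n_0}$ consists of independent centred random variables. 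This is what makes the second Borel--Cantelli lemma available.

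Next I would establish the lower large-deviation estimate
\[
\mmp\bigl(Z_n \ge (1-\delta/2)(2a(\tau_n)h_0(a(\tau_n)))^{1/2}\bigr)\ \ge\ \exp\bigl(-(1-\delta^2/8) h_0(a(\tau_n))\bigr)
\]
for all sufficiently large $n$, with $h_0(x) = (q+1)\log\log x$ if $\mu = 1$ and $h_0(x)=(\mu-1)\log x$ if $\mu>1$. This comes from a one-sided Cram\'er-type bound applied to the independent bounded centred summands $\eta_k(\tau_n)$, $k\in R(\tau_n)$. The variance of $Z_n$ is asymptotically $(1-\varsigma)a(\tau_n)$ (or $a(\tau_n)$) by the defining property of $R(\cdot)$, the individual summands are uniformly bounded by $1$ in absolute value, and (A1) gives $a(\tau_n)\to\infty$, so a standard exponential-tilt / Bahadur--Rao calculation is in force provided the deviation level $(1-\delta/2)(2h_0(a(\tau_n))/\text{Var}(Z_n))^{1/2}$ is of moderate order, which holds by the choice of $h_0$ and the definition of $\tau_n$.

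Now I plug in the scale $a(\tau_n)\sim w_n(\gamma,\mu)$ that comes from \eqref{eq:tau} together with (B1): in the case $\mu=1$ one gets $h_0(a(\tau_n)) \sim (q+1)\log\log \exp(n^{(1+\gamma)/(q+1)}) \sim (1+\gamma)\log n$, and for $\mu>1$ one gets $h_0(a(\tau_n)) \sim (\mu-1)\log n^{(1+\gamma)/(\mu-1)} = (1+\gamma)\log n$. Therefore the probability above is bounded below by $n^{-(1+\gamma)(1-\delta^2/8)}$, whose sum diverges by the choice of $\gamma$. The second Borel--Cantelli lemma applied to the independent events then yields $Z_n \ge (1-\delta/2)(2a(\tau_n) h_0(a(\tau_n)))^{1/2}$ for infinitely many $n$ almost surely. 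Finally, the difference $\sum_{k\ge 1}\eta_k(\tau_n) - Z_n$ is a centred sum with variance $\varsigma a(\tau_n)$ (or $o(a(\tau_n))$ in the (B22) case), so Chebyshev's inequality and a routine Borel--Cantelli argument show it is $o\bigl((a(\tau_n)h_0(a(\tau_n)))^{1/2}\bigr)$ in probability / almost surely along a subsequence; after sending $\varsigma\downarrow 0$ one absorbs the loss and obtains the claim with constant $1-\delta$. The $\liminf\le -(1-\delta)$ statement follows by symmetry, applied to $-\eta_k$.

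The passage from $\nu_n=\tau_n$ to $\nu_n=\lfloor\tau_n\rfloor$ uses (B1): when $a$ is eventually continuous, $\tau_n$ is the exact hitting time and $a(\tau_n) = w_n(\gamma,\mu)$; when only $\log a(t-1)/\log a(t)\to 1$ (resp.\ $a(t-1)/a(t)\to 1$) is assumed, $a(\lfloor\tau_n\rfloor)$ differs from $a(\tau_n)$ by a factor tending to $1$, which is absorbed into the $(1-\delta/2)$ slack. The main obstacle in this whole plan is the large-deviation lower bound in the regime $q>0$ or $\mu>1$ with a non-degenerate slowly varying correction: the exponential tilt has to be computed uniformly in $n$, and one needs that the bounded-summand third moment and the Lindeberg-type correction contribute only to the $\delta^2/8$ slack. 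Everything else is a reshuffling of the previous paper's arguments, and the change from their (A3) to ours enters only through the final evaluation of $h_0(a(\tau_n))$ via the new definition of $w_n(\gamma,\mu)$ and the placeholder $q$ in \eqref{eq:infim2}, so the core chaining / Borel--Cantelli scheme transfers intact.
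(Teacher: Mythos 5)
Your outline follows the same route as the proof the paper actually relies on: the paper does not reprove this lemma but imports it (Lemma 4.13 and Proposition 4.7 of Buraczewski--Iksanov--Kotelnikova), remarking only that the modified (A3) does not break the argument, and that argument is exactly the scheme you describe --- the disjointness clauses in (B21)/(B22) make the block sums $Z_n=\sum_{k\in R(\tau_n)}\eta_k(\tau_n)$ independent in $n$, a Kolmogorov-type exponential lower bound gives $\mmp\{Z_n\ge (1-\delta/2)(2a(\tau_n)h_0(a(\tau_n)))^{1/2}\}\gtrsim \exp(-(1-\delta^2/8)h_0(a(\tau_n)))$, and the evaluation $h_0(a(\tau_n))\sim(1+\gamma)\log n$ (the only place where the new form of (A3), via $w_n(\gamma,\mu)$ and $q$, enters) together with $(1+\gamma)(1-\delta^2/8)<1$ makes the series diverge, so the second Borel--Cantelli lemma applies; the $\liminf$ half follows by replacing $\eta_k$ with $-\eta_k$. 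That part of your reconstruction is correct.

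Two steps are weaker than they need to be. First, the complement sum $W_n:=\sum_{k\notin R(\tau_n)}\eta_k(\tau_n)$: under (B21) its variance is of order $\varsigma\, a(\tau_n)$, so plain Chebyshev gives tails of order $\varsigma/(\varepsilon^2 h_0(a(\tau_n)))\asymp 1/\log n$, which is \emph{not} summable; hence a routine Borel--Cantelli step does not yield an almost sure bound, and convergence in probability alone does not combine with the event that $Z_n$ is large infinitely often, because the joint events are no longer independent across $n$. The standard repair is to control $W_n$ by an exponential bound of the type $\me\exp(\theta W_n)\le\exp(2^{-1}\theta^2 e^{|\theta|}\var W_n)$ (or a fourth-moment bound), which gives tails bounded by a power $n^{-c\varepsilon^2(1+\gamma)/\varsigma}$ that is summable once $\varsigma$ is chosen small relative to $\delta$; the order of quantifiers matters here and is permitted by (B21) (under (B22) the variance of $W_n$ is $o(a(\tau_n))$ and the same bound is immediate). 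Second, the passage to $\nu_n=\lfloor\tau_n\rfloor$ is not merely a change of normalizer: the statement concerns $\sum_{k\ge1}\eta_k(\lfloor\tau_n\rfloor)$ normalized by $a(\lfloor\tau_n\rfloor)$, so the whole estimate must be run at the times $\lfloor\tau_n\rfloor$ (the variance and disjointness requirements of (B21)/(B22) being invoked at those times as well), and (B1) is needed precisely to guarantee $h_0(a(\lfloor\tau_n\rfloor))\sim(1+\gamma)\log n$; since $a$ is not assumed monotone, one cannot simply absorb a comparison of $a(\lfloor\tau_n\rfloor)$ with $a(\tau_n)$ into the $\delta$-slack as stated. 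Both gaps are fixable by standard means and do not change the architecture of the proof.
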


\begin{assertion}\label{lilhalf2}
	Suppose (A1), (A3), (B1) and either (B21) or (B22). Then, with $\mu\geq 1$ and $q\geq 0$ as defined in \eqref{eq:infim} and \eqref{eq:infim2}, respectively,
	\begin{equation*}
		\limsup_{t\to\infty}(\liminf_{t\to\infty})\frac{X^\ast(t)}{(2(q+1)a(t) \log\log
			a(t))^{1/2}}\geq 1~(\leq -1)\quad\text{{\rm a.s.}}
	\end{equation*}
	and $$\limsup_{t\to\infty}(\liminf_{t\to\infty})\frac{X^\ast(t)}{(2(\mu-1)a(t)\log a(t))^{1/2}}\geq 1~(\leq -1)\quad \text{{\rm a.s.}}$$
	in the cases $\mu=1$ and $\mu>1$, respectively.
\end{assertion}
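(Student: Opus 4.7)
The plan is to derive Proposition \ref{lilhalf2} from Lemma \ref{lem:new} by upgrading the subsequence statement of the lemma to a statement along the full continuum $t\to\infty$. Introduce $h_0(t):=(q+1)\log\log t$ in the case $\mu=1$ and $h_0(t):=(\mu-1)\log t$ in the case $\mu>1$, so that the denominator in Proposition \ref{lilhalf2} is exactly $(2a(t) h_0(a(t)))^{1/2}$, which matches the normalization in Lemma \ref{lem:new} evaluated at $t$. Note also that $\sum_{k\geq 1}\eta_k(\nu_n)=X^\ast(\nu_n)$, so the lemma directly controls the centered sum appearing in the proposition.

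Fix any sequence $(\delta_m)_{m\in\mn}$ of sufficiently small positive numbers with $\delta_m\downarrow 0$. For each $m$, apply Lemma \ref{lem:new} with $\delta=\delta_m$: this furnishes an admissible $\gamma_m$ and yields, with $\tau_n:=\tau_n(\gamma_m,\mu)$ and $\nu_n\in\{\tau_n,\lfloor\tau_n\rfloor\}$ as provided by (B1),
\begin{equation*}
    \limsup_{n\to\infty}\frac{X^\ast(\nu_n)}{(2a(\nu_n) h_0(a(\nu_n)))^{1/2}}\geq 1-\delta_m\quad\text{a.s.}
\end{equation*}
Since $\tau_n\to\infty$, and hence $\nu_n\to\infty$, evaluating the function $t\mapsto X^\ast(t)/(2a(t) h_0(a(t)))^{1/2}$ along the subsequence $(\nu_n)$ produces a lower bound for its $\limsup$ as $t\to\infty$, giving
\begin{equation*}
    \limsup_{t\to\infty}\frac{X^\ast(t)}{(2a(t) h_0(a(t)))^{1/2}}\geq 1-\delta_m\quad\text{a.s.}
\end{equation*}

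Intersecting the countably many a.s. events so obtained and letting $m\to\infty$ yields the desired bound $\limsup_{t\to\infty}\geq 1$ a.s. The liminf statement is handled identically starting from the parenthesized half of Lemma \ref{lem:new}, using that a subsequence liminf is an upper bound for $\liminf_{t\to\infty}$. Since the substantive work (the Borel--Cantelli argument based on the exponential estimate of Lemma \ref{lem:exp_mom}, the decorrelation afforded by (B21)/(B22), and the continuity/integer-part dichotomy imposed by (B1)) is already absorbed into Lemma \ref{lem:new}, I anticipate no genuine obstacle at the level of the proposition itself. The only point meriting care is checking that the function labelled $h_0$ in Lemma \ref{lem:new} is exactly the $(q+1)\log\log$ or $(\mu-1)\log$ factor in the proposition's denominator, which is a matter of inspecting the two statements against the definitions in \eqref{eq:defg}.
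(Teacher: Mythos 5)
Your proposal is correct and matches the paper's route: the paper obtains Proposition \ref{lilhalf2} exactly by specializing to the subsequence $\nu_n$ from Lemma \ref{lem:new} (whose proof, carried over from Buraczewski, Iksanov and Kotelnikova, contains all the substantive work) and then letting $\delta\downarrow 0$ along a countable sequence. Your identification of $h_0$ with $(q+1)\log\log$ resp.\ $(\mu-1)\log$ via \eqref{eq:defg} and the passage from the subsequence limsup/liminf to the full limits as $t\to\infty$ are exactly the steps implicit in the paper.
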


Proposition \ref{lilhalf1} is a counterpart of Proposition 4.6 in \cite{Buraczewski+Iksanov+Kotelnikova:2023+}.
Although it is tempting to believe that the proof of Proposition 4.6 in~\cite{Buraczewski+Iksanov+Kotelnikova:2023+} goes through as well, this is not the case. First,  
in the proof of \eqref{eq:Yf}, instead of dealing with the process $X$ and its mean $b$ (which are not monotone anymore), we work with their nondecreasing majorants $Y$ and $f$. It is not obvious that $Y$ and $f$ are bounded from above similarly 
to $X$ and $b$. Second, unlike in~\cite{Buraczewski+Iksanov+Kotelnikova:2023+} we do not require that the variance $a$ is asymptotically nondecreasing. Hence, putting $a(t_n)$ in the denominator of \eqref{princip} is not allowed. 
\begin{assertion}\label{lilhalf1}
	Suppose (A1)-(A5). Then, with $\mu\geq 1$ and $q\geq 0$ as defined in \eqref{eq:infim} and~\eqref{eq:infim2}, respectively, 	
	\begin{equation*}
		\limsup_{t\to\infty}(\liminf_{t\to\infty})\frac{X^\ast(t)}{(2(q+1)a(t) \log\log
			a(t))^{1/2}}\leq 1~(\geq -1)\quad \text{\rm a.s.}
	\end{equation*}
	and $$\limsup_{t\to\infty}(\liminf_{t\to\infty})\frac{X^\ast(t)}{(2(\mu-1)a(t)\log a(t))^{1/2}}\leq 1~(\geq -1)\quad \text{{\rm a.s.}}$$
	in the cases $\mu=1$ and $\mu>1$, respectively.
\end{assertion}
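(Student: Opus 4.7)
The plan is to adapt the proof of Proposition 4.6 in \cite{Buraczewski+Iksanov+Kotelnikova:2023+} to the present non-monotone setting, following a skeleton-plus-fluctuation scheme. Both halves of the proposition are proved by the same argument with obvious sign changes, so I focus on the upper bound for $\limsup$. Set $h(u):=\bigl(2(q+1)u\log\log u\bigr)^{1/2}$ if $\mu=1$ and $h(u):=\bigl(2(\mu-1)u\log u\bigr)^{1/2}$ if $\mu>1$; fix small $\kappa,\varrho,\delta>0$ and work with the sequences $(t_n)$, $(s_n)$, $(v_n)$ from (A3)--(A5) together with the sub-partitions $t_n=t_{0,n}<\ldots<t_{j_n,n}=t_{n+1}$ from (A5).

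The first ingredient is a Bernstein-type deviation bound for $X^\ast(s_n)=\sum_{k\ge 1}\eta_k(s_n)$. Since $|\eta_k(s_n)|\le 1$ and $\var X(s_n)=a(s_n)\to\infty$ by (A1), a standard Chernoff estimate yields
$$
\mmp\bigl(X^\ast(s_n)>(1+\delta)h(a(s_n))\bigr)\le\exp\bigl(-(1+\delta/2)(q+1)\log\log a(s_n)\bigr)
$$
when $\mu=1$, and the analogous bound with $\log$ in place of $\log\log$ and factor $\mu-1$ when $\mu>1$. Lemma \ref{lem:aux1}(a) turns these into summable tails of order $n^{-(1-\kappa)(1+\delta/2)}$ for $\kappa$ small, so Borel--Cantelli gives $X^\ast(s_n)\le(1+\delta)h(a(s_n))$ eventually a.s. The skeleton fluctuation $\max_{0\le k\le j_n}|X^\ast(t_{k,n})-X^\ast(t_n)|$ is then handled by Lemma \ref{billappl}: Markov's inequality at order $2r$ combined with Lemma \ref{lem:aux1}(b) makes the associated probabilities summable, and Borel--Cantelli delivers an $o(h(a(s_n)))$ bound a.s.

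The genuinely new step is controlling the fluctuation of $X^\ast$ inside a single subinterval $[t_{k-1,n},t_{k,n}]$, where the monotonicity argument of \cite{Buraczewski+Iksanov+Kotelnikova:2023+} breaks. The key observation is that (A2a) yields the pointwise deterministic majorization
$$
|X(t)-X(t_{k-1,n})|\le Y(t_{k,n})-Y(t_{k-1,n}),\qquad |b(t)-b(t_{k-1,n})|\le f(t_{k,n})-f(t_{k-1,n})\le A
$$
for every $t\in[t_{k-1,n},t_{k,n}]$, where the second inequality uses (A5) and the monotonicity of $Y$ and $f$. The quantity $Y(t_{k,n})-Y(t_{k-1,n})$ is a sum of independent nonnegative integers bounded by $M$ with mean at most $A$, so Lemma \ref{lem:exp_mom} (with $\vartheta$ of order $1/M$) gives
$$
\mmp\bigl(Y(t_{k,n})-Y(t_{k-1,n})>\delta\, h(a(s_n))\bigr)\le \exp\bigl(-c\,(a(s_n))^{1/2}\bigr)
$$
for some $c=c(\delta,A,M)>0$. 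A union bound over $k=1,\ldots,j_n$ and summation over $n$ are enabled precisely by the summability assumption $\sum_n j_n\exp(-\varepsilon(a(s_n))^{1/2})<\infty$ in (A5). Combining the three bounds with (A4) (which lets one replace $a(s_n)$ by $a(t)$ in the denominator, because $a(t)\sim a_0(t)\ge a_0(s_n)$ uniformly on $[t_n,t_{n+1}]$) yields $\limsup_t X^\ast(t)/h(a(t))\le 1+O(\delta)$ a.s., and letting $\delta,\varrho,\kappa\downarrow 0$ concludes.

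The main obstacle is exactly the within-subinterval control: the usual argument replaces $\sup_{[t_{k-1,n},t_{k,n}]}|X(t)-X(t_{k-1,n})|$ by the telescoping difference $X(t_{k,n})-X(t_{k-1,n})$, which is valid only in the monotone setting. The substitute monotone majorant $Y-f$ does the job, but at the cost of a Chernoff tail decaying only like $\exp(-c(a(s_n))^{1/2})$ rather than $\exp(-c\,a(s_n))$, so one must verify that the sub-partition of (A5) is coarse enough for the union bound to remain summable; this is precisely the rationale for the technical condition imposed there. A secondary difficulty is that $a$ is not assumed to be eventually monotone, so $a(t_n)$ cannot be used in the denominator, and the surrogate $a(s_n)$ supplied by (A4) must be employed instead.
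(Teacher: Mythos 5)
Your proposal is correct and follows essentially the same route as the paper's proof: a Chernoff/Bernstein bound at the points $s_n$ combined with Lemma \ref{lem:aux1}(a) and Borel--Cantelli, the Rosenthal-based maximal inequality of Lemma \ref{billappl} with Lemma \ref{lem:aux1}(b) for the partition-point fluctuations, and the monotone majorants $Y$ and $f$ with Lemma \ref{lem:exp_mom} and the summability condition in (A5) for the within-subinterval fluctuations, finally using (A4) to pass from $a(s_n)$ to $a(t)$. You also correctly identify the two genuine novelties relative to the monotone case (the $Y$/$f$ majorization and the use of $a(s_n)$ in place of $a(t_n)$), so no substantive comments are needed.
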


\begin{proof}[Proof of Proposition \ref{lilhalf1}]
	In view of (A4), it is enough to show that, for each $\varrho\in (0,1)$ and each positive $\kappa$ sufficiently close to $0$,
	\begin{equation}\label{princip}
		\limsup_{n\to\infty}\frac{\sup_{u\in [t_n,\,t_{n+1}]}\,X^\ast(u)}{(2 a(s_n) h_\varrho(a(s_n)))^{1/2}}\leq 1+\kappa\quad\text{a.s.},
	\end{equation}
	where $t_n=t_n(\kappa,\mu)$ and $s_n=s_n(\kappa,\mu)$ are
	as defined in \eqref{tn} and (A4), respectively, $h_\varrho=g_{1,\,q_\varrho}$ if $\mu$ in \eqref{eq:infim} is equal to $1$ and $h_\varrho=g_{\mu_\varrho}$ if $\mu>1$ (see \eqref{eq:defg} for the definitions of $g_{1,\,q_\varrho}$ and $g_{\mu_\varrho}$). Indeed, if \eqref{princip} holds
	true, then, for large enough $n$, 
	\begin{multline*}
		\limsup_{t\to\infty}\frac{X^\ast(t)}{(2 a(t) h_\varrho (a(t)))^{1/2}}=\limsup_{t\to\infty}\frac{X^\ast(t)}{(2 a_0(t) h_\varrho (a_0(t)))^{1/2}}\\\leq \limsup_{n\to\infty}\frac{\sup_{u\in [t_n,\,t_{n+1}]}\,X^\ast(u)}{(2 a_0(s_n)h_\varrho (a_0(s_n)))^{1/2}}=\limsup_{n\to\infty}\frac{\sup_{u\in [t_n,\,t_{n+1}]}\,X^\ast(u)}{(2 a(s_n)h_\varrho (a(s_n)))^{1/2}}\leq 1+\kappa\quad\text{a.s.}
	\end{multline*}
	The relation $\liminf_{t\to\infty}\frac{X^\ast(t)}{(2 a(t) h_\varrho (a(t)))^{1/2}}\geq -1-\kappa$ a.s.\ does not require a separate proof. It follows from the argument for $\limsup$ upon replacing $\eta_k(t)$ with $-\eta_k(t)$.
	
	To obtain \eqref{princip}, we first prove in Lemma \ref{inter1} that
	\begin{equation}\label{princip1}
		{\limsup}_{n\to\infty}\frac{X^\ast(s_n)}{(2 a(s_n)h_\varrho (a(s_n)))^{1/2}}\leq 1+\kappa\quad \text{{\rm a.s.}}
	\end{equation}
	and then show that
	\begin{equation}\label{princip2}
		\lim_{n\to\infty}\frac{\sup_{u\in [t_n,\, t_{n+1}]}|X^\ast(u)-X^\ast(s_n)|}{(a(s_n)h_\varrho(a(s_n)))^{1/2}}=0\quad\text{a.s.}
	\end{equation}
	
	\begin{lemma}\label{inter1}
		Suppose (A1), (A3) and (A4). Then relation \eqref{princip1} holds for any $\kappa\in (0, (\sqrt{5}-1)/2)$.
	\end{lemma}
	\begin{proof}
		Fix any $\kappa\in (0,(\sqrt{5}-1)/2)$. We first show that there exists $\rho=\rho(\kappa)>0$ satisfying
		\begin{equation}\label{eq:choiceofrho}
			(1-\kappa)(1+\kappa)^2(2-\exp(2(1+\kappa)\rho))>1.
		\end{equation}
		To prove this, note that our choice of $\kappa$ ensures $(1-\kappa)(1+\kappa)^2>1$. Observe next that as
		positive $\rho$ approaches $0$, $2-\exp(2(1+\kappa)\rho)$ becomes arbitrary close to $1$, thereby justifying
		$2-\exp(2(1+\kappa)\rho)>(1-\kappa)^{-1}(1+\kappa)^{-2}$.
		
		By Lemma 4.1 in \cite{Buraczewski+Iksanov+Kotelnikova:2023+}, for $\vartheta\in\mr$ and $t\geq 0$, $$\me\exp(\vartheta X^\ast(t))\leq \exp(2^{-1}\vartheta^2\exp(|\vartheta|)a(t)).$$ Fix any $\theta\in\mr$ and put $\vartheta=\theta/(2a(s_n)h_\varrho(a(s_n)))^{1/2}$. Observe that, for large enough $n$, $h_\varrho(a(s_n))/a(s_n)\leq2 \rho^2$ for $\rho$ satisfying \eqref{eq:choiceofrho}.
		An application of Markov's inequality then yields,
		for large $n$ as above,
		\begin{multline*}
			\mmp\Big\{\frac{X^\ast(s_n)}{(2a(s_n)h_\varrho(a(s_n)))^{1/2}}>1+\kappa\Big\}\leq \eee^{-(1+\kappa)\theta} \me \exp\Big(\theta \frac{X^\ast(s_n)}{(2a(s_n)h_\varrho(a(s_n)))^{1/2}}\Big)\\\leq \exp\Big(-(1+\kappa)\theta+\frac{\theta ^2}{4h_\varrho(a(s_n))}\exp\Big(\frac{\rho |\theta|}{h_\varrho(a(s_n))}\Big)\Big).
		\end{multline*}
		Putting $\theta=2(1+\kappa)h_\varrho(a(s_n))$ and then invoking Lemma \ref{lem:aux1}(a) we obtain
		\begin{multline*}
			\mmp\Big\{\frac{X^\ast(s_n)}{(2a(s_n)h_\varrho(a(s_n)))^{1/2}}>1+\kappa\Big\}\leq \exp(-(1+\kappa)^2(2-\exp(2(1+\kappa)\rho))h_\varrho(a(s_n)))\\=O\Big(\frac{1}{n^{(1-\kappa)(1+\kappa)^2(2-\exp(2(1+\kappa)\rho))}}\Big),\quad n\to\infty.
		\end{multline*}
		According to \eqref{eq:choiceofrho}, $$\sum_{n\geq n_0}\mmp\Big\{\frac{X^\ast(t_n)}{(2a(t_n)h_\varrho(a(t_n)))^{1/2}}>1+\kappa\Big\}<\infty$$ for some $n_0\in\mn$ large enough. An application of the Borel-Cantelli lemma completes the proof of Lemma \ref{inter1}.
	\end{proof}
	
	Next, in order to prove \eqref{princip2} it suffices to show that
	\begin{equation}\label{eq:3}
		\lim_{n\to\infty}\frac{\sup_{u\in [t_n,\, t_{n+1}]}|X^\ast(u)-X^\ast(t_n)|}{(a(s_n))^{1/2}}=0\quad\text{a.s.}
	\end{equation}
	and
	\begin{equation}\label{eq:4}
		\lim_{n\to\infty}\frac{|X^\ast(t_n)-X^\ast(s_n)|}{(a(s_n))^{1/2}}=0\quad\text{a.s.}
	\end{equation}
	Since \eqref{eq:4} is a consequence of \eqref{eq:3}, we are left with proving \eqref{eq:3}.
	
	\noindent {\sc Proof of \eqref{eq:3}.} Let $t_n=t_{0,\,n}<\ldots<t_{j,\,n}=t_{n+1}$ be a partition defined in~(A5). With this at hand, write
	\begin{multline*}
		\sup_{u\in [t_n,\, t_{n+1}]}|X^\ast(u)-X^\ast(t_n)|\\=\max_{1\leq k\leq j}\sup_{v\in
			[0,\,t_{k,\,n}-t_{k-1,\,n}]}|(X^\ast(t_{k-1,\,n})-X^\ast(t_n))+(X^\ast(t_{k-1,\,n}+v)-X^\ast(t_{k-1,\,n}))|\\\leq \max_{1\leq k\leq j}|X^\ast(t_{k-1,\,n})-X^\ast(t_n)|+\max_{1\leq k\leq j}\sup_{v\in
			[0,\,t_{k,\,n}-t_{k-1,\,n}]}|(X^\ast(t_{k-1,\,n}+v)-X^\ast(t_{k-1,\,n}))|\quad\text{a.s.}
	\end{multline*}
	
	By Markov's inequality and Lemma \ref{billappl},
	for any $r>0$ and all $\varepsilon>0$,
	\begin{multline*}
		\mmp\Big\{\max_{1\leq k\leq j}|X^\ast(t_{k-1,\,n})-X^\ast(t_n)|>\varepsilon (a(s_n))^{1/2}\Big\}\\\leq \frac{\me (\max_{1\leq k\leq j}\,|X^\ast(t_{k-1,\,n})-X^\ast(t_n)|)^{2r}}{\varepsilon^{2r}(a(s_n))^r}\leq \frac{A_r(v_{n+1}(\kappa, \mu)-v_n(\kappa, \mu))^r}{\varepsilon^{2r}(a(s_n))^r}.
	\end{multline*}
	By Lemma \ref{lem:aux1}(b), there exists an integer $r\geq 2$ such that the right-hand side forms a sequence which is summable in $n$. Hence, an application of the Borel-Cantelli lemma yields
	\begin{equation*}
		\lim_{n\to\infty}\frac{\max_{1\leq k\leq j}|X^\ast(t_{k-1,\,n})-X^\ast(t_n)|}{(a(s_n))^{1/2}}=0\quad\text{a.s.}
	\end{equation*}
	
	Next, we work towards proving that
	\begin{equation}\label{eq:Yf}
		\lim_{n\to\infty}\frac{\max_{1\leq k\leq j}\sup_{v\in [0,\, t_{k,\,n}-t_{k-1,\,n}]}|X^\ast(t_{k-1,\,n}+v)-X^\ast(t_{k-1,\,n})|}{(a(s_n))^{1/2}}=0\quad \text{a.s.}
	\end{equation}
	According to (A2), for any $0\le s<t$, $|X(t)-X(s)|\le Y(t)-Y(s)$ a.s., where the process $Y$ is a.s.\ nondecreasing.
	Taking into account Remark \ref{rem:bf} we obtain
	\begin{multline*}
		\sup_{v\in [0,\,t_{k,\,n}-t_{k-1,\,n}]}|X^\ast(t_{k-1,\,n}+v)-X^\ast(t_{k-1,\,n})|\\\leq \sup_{v\in [0,\,t_{k,\,n}-t_{k-1,\,n}]}|X(t_{k-1,\,n}+v)-X(t_{k-1,\,n})|+\sup_{v\in [0,\,t_{k,\,n}-t_{k-1,\,n}]}|b(t_{k-1,\,n}+v)-b(t_{k-1,\,n})|\\
		\leq \sup_{v\in [0,\,t_{k,\,n}-t_{k-1,\,n}]}(
		Y(t_{k-1,\,n}+v)-Y(t_{k-1,\,n}))
		+\sup_{v\in [0,\,t_{k,\,n}-t_{k-1,\,n}]}(
		f(t_{k-1,\,n}+v)-f(t_{k-1,\,n}))
		\\= Y(t_{k,\,n})-Y(t_{k-1,\,n})+f(t_{k,\,n})-f(t_{k-1,\,n})\quad\text{a.s.}
	\end{multline*}
	By (A1) and (A5),
	\begin{equation}\label{eq:f}
		\frac{\max_{1\leq k\leq j}\,(f(t_{k,\,n})-f(t_{k-1,\,n}))}{(a(s_n))^{1/2}}\leq \frac{A}{(a(s_n))^{1/2}}~\to 0,\quad n\to\infty.
	\end{equation}
	Finally, for all $\varepsilon>0$,
	\begin{multline}\label{eq:Y}
		\mmp\big\{\max_{1\leq k\leq j}(Y(t_{k,\,n})-Y(t_{k-1,\,n}))>\varepsilon (a(s_n))^{1/2}\big\}\le \sum_{k=1}^{j} \mmp\big\{Y(t_{k,\,n})-Y(t_{k-1,\,n})>\varepsilon (a(s_n))^{1/2}\big\}\\
		\le \eee^{-\varepsilon (a(s_n))^{1/2}} \sum_{k=1}^{j}  \me \eee^{Y(t_{k,\,n})-Y(t_{k-1,\,n})}\le \eee^{-\varepsilon (a(s_n))^{1/2}} \sum_{k=1}^{j} \exp((\eee^M-1)(f(t_{k,\,n})-f(t_{k-1,\,n})))\\\leq \exp(A(\eee^M-1))j \eee^{-\varepsilon (a(s_n))^{1/2}},
	\end{multline}
	having utilized Markov's inequality for the second inequality, Lemma \ref{lem:exp_mom} for the third and (A5) for the fourth. Invoking (A5) once again we conclude that the right-hand side is summable in $n$.
	Hence, an application of the Borel-Cantelli lemma yields
	$$\lim_{n\to\infty}\frac{\max_{1\leq k\leq j}(X(t_{k,\,n})-X(t_{k-1,\,n}))}{(a(s_n))^{1/2}}=0\quad\text{a.s.}$$
	The proofs of both \eqref{eq:3} and Proposition~\ref{lilhalf1} are complete.
\end{proof}

\section{Proofs related to Karlin's occupancy scheme}\label{sec:Karlin}

\subsection{Auxiliary results}

For ease of reference, we state two known results. The former is an obvious extension of Theorem 1.5.3 in \cite{Bingham+Goldie+Teugels:1989}. The latter is Lemma 6.2 in \cite{Buraczewski+Iksanov+Kotelnikova:2023+}.
\begin{lemma}\label{lem:monotone}
	Let $f$ be a function which varies regularly at $\infty$ of positive index and $g$ a positive nondecreasing function with $\lim_{t\to\infty}g(t)=\infty$. Then there exists a nondecreasing function $h$ satisfying
	$f(g(t))\sim h(t)$ as $t\to\infty$.
\end{lemma}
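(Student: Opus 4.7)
The plan is to reduce to Theorem 1.5.3 of \cite{Bingham+Goldie+Teugels:1989} by a simple composition. That theorem asserts that any function which varies regularly at $\infty$ of positive index is asymptotic to a nondecreasing function. So first I would apply it to $f$: there exists a nondecreasing $\tilde f$ with $f(s)\sim \tilde f(s)$ as $s\to\infty$.

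Next I would set $h(t):=\tilde f(g(t))$. This $h$ is nondecreasing as a composition of nondecreasing functions, since $g$ is nondecreasing by assumption and $\tilde f$ was chosen nondecreasing. It therefore only remains to verify $f(g(t))\sim h(t)$ as $t\to\infty$.

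For this I would use $\lim_{t\to\infty} g(t)=\infty$: given $\varepsilon>0$, the relation $f(s)/\tilde f(s)\to 1$ as $s\to\infty$ furnishes some $s_0$ such that $|f(s)/\tilde f(s)-1|<\varepsilon$ for all $s\ge s_0$, and then choosing $t_0$ so that $g(t)\ge s_0$ for all $t\ge t_0$ gives $|f(g(t))/h(t)-1|<\varepsilon$ for all $t\ge t_0$. This yields $f(g(t))\sim h(t)$ as $t\to\infty$.

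There is no real obstacle; the only potential subtlety is that $\tilde f$ coming from Theorem 1.5.3 of \cite{Bingham+Goldie+Teugels:1989} is guaranteed to be nondecreasing only eventually, but one can redefine $\tilde f$ arbitrarily (e.g.\ as a constant) on an initial interval to make it nondecreasing everywhere, which does not affect the asymptotic equivalence and hence does not affect the argument above.
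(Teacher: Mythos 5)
Your argument is correct and is exactly the "obvious extension" of Theorem 1.5.3 in Bingham--Goldie--Teugels that the paper has in mind: take the nondecreasing asymptotic equivalent $\tilde f$ of $f$ provided by that theorem and compose it with the nondecreasing $g$, using $g(t)\to\infty$ to transfer the equivalence. The handling of the initial interval where $\tilde f$ need not be monotone is a fine (and standard) fix, so nothing is missing.
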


\begin{lemma}\label{eq:thm21}
	(a) Conditions \eqref{eq:deHaan} and \eqref{eq:slowly} entail
	\begin{equation}\label{eq:rho}
		\rho(t)~\sim~(\beta+1)^{-1}(\log t)^{\beta+1}l(\log t),\quad t\to\infty.
	\end{equation}
	
	\noindent (b) Conditions \eqref{eq:deHaan} and \eqref{eq:slowly2} entail
	\begin{equation}\label{eq:rho2}
		\rho(t)~\sim~(\sigma \lambda)^{-1}\exp(\sigma (\log t)^\lambda)(\log t)^{1-\lambda},\quad t\to\infty.
	\end{equation}
\end{lemma}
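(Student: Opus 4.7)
The strategy is to leverage the integral representation of functions in de Haan's class $\Pi$. By Theorem 3.7.3 of \cite{Bingham+Goldie+Teugels:1989}, any $\rho\in\Pi_{\ell,\,\infty}$ admits a representation of the form $\rho(t)=c(t)+\int_a^t \ell_1(u)u^{-1}\dd u$ for some bounded $c$ and some $\ell_1$ asymptotically equivalent to $\ell$. Since in both cases the auxiliary $\ell$ diverges, so does the integral, and a Karamata-type argument (Proposition 1.5.9a in \cite{Bingham+Goldie+Teugels:1989}) applied to the slowly varying function $\ell_1$ yields $\ell(t)=o\bigl(\int_a^t \ell(u)u^{-1}\dd u\bigr)$. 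Consequently,
$$
\rho(t)~\sim~\int_a^t \frac{\ell(u)}{u}\dd u,\quad t\to\infty,
$$
and the problem reduces to computing the asymptotics of this integral under each of \eqref{eq:slowly} and \eqref{eq:slowly2}.

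For part (a), the change of variables $v=\log u$ transforms the integral into $\int_{\log a}^{\log t}v^\beta l(v)\dd v$. Because $v\mapsto v^\beta l(v)$ is regularly varying at $\infty$ of positive index $\beta$, Karamata's theorem (Proposition 1.5.8 in \cite{Bingham+Goldie+Teugels:1989}) gives
$$
\int_{\log a}^{\log t} v^\beta l(v)\dd v~\sim~\frac{(\log t)^{\beta+1} l(\log t)}{\beta+1},\quad t\to\infty,
$$
which is precisely \eqref{eq:rho}.

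For part (b), the same substitution gives $\int_{\log a}^{\log t}\exp(\sigma v^\lambda)\dd v$. Since the integrand grows faster than any power, the leading contribution comes from the upper endpoint and can be extracted by an integration by parts based on the identity $\exp(\sigma v^\lambda)=(\sigma\lambda v^{\lambda-1})^{-1}(\dd/\dd v)\exp(\sigma v^\lambda)$. The boundary term at $v=\log t$ equals $(\sigma\lambda)^{-1}(\log t)^{1-\lambda}\exp(\sigma(\log t)^\lambda)$, the boundary term at the lower endpoint is a finite constant, and the remaining integral is of strictly smaller order than the leading term because the factor $v^{1-\lambda}$ is merely polynomial in $v$. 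This gives \eqref{eq:rho2}.

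The only delicate point is the passage from \eqref{eq:deHaan} to the integral representation together with the estimate $\ell(t)=o\bigl(\int_a^t \ell(u)u^{-1}\dd u\bigr)$; after that the computations are routine. A self-contained alternative that bypasses the representation theorem is to specialize \eqref{eq:deHaan} to $\lambda=\eee$, obtaining $\rho(\eee t)-\rho(t)\sim \ell(t)$, and then to telescope along $t_n=\eee^n$ to arrive at $\rho(\eee^N)\sim\sum_{n=n_0}^{N-1}\ell(\eee^n)$. The resulting sums are estimated via their integral approximations, handled exactly as above in (a) and (b).
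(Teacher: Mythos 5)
Your proof is correct. Note, however, that the paper itself does not prove this lemma at all: it is imported verbatim, as Lemma 6.2 of \cite{Buraczewski+Iksanov+Kotelnikova:2023+}, so your argument is compared against a citation rather than an in-text proof. What you supply is the standard self-contained derivation: the de Haan representation theorem for the class $\Pi$ gives $\rho(t)=c(t)+\int_a^t \ell_1(u)u^{-1}\,\dd u$ with $c$ convergent and $\ell_1\sim\ell$; since $\ell\to\infty$ the integral diverges, so $\rho(t)\sim\int_a^t \ell(u)u^{-1}\,\dd u$, and the substitution $v=\log u$ reduces everything to $\int^{\log t}\ell(\eee^v)\,\dd v$, handled by Karamata's theorem in case (a) and by the integration-by-parts (Laplace-type) asymptotics in case (b). Two small points of rigour you should make explicit: after the substitution the integrand is $\ell(\eee^v)$, which is only \emph{asymptotically} equal to $v^\beta l(v)$, respectively $\exp(\sigma v^\lambda)$, so you need the standard fact that divergent integrals of asymptotically equivalent nonnegative integrands are asymptotically equal; and in (b) the cleanest way to dispose of the remainder $\frac{1-\lambda}{\sigma\lambda}\int^{V} v^{-\lambda}\exp(\sigma v^\lambda)\,\dd v$ is to observe that its integrand is $o$ of the original integrand, hence the remainder is $o\bigl(\int^V\exp(\sigma v^\lambda)\,\dd v\bigr)$, which immediately yields $\int^V\exp(\sigma v^\lambda)\,\dd v\sim(\sigma\lambda)^{-1}V^{1-\lambda}\exp(\sigma V^\lambda)$ — "the factor is merely polynomial" as stated is a bit loose. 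Your alternative telescoping argument along $t_n=\eee^n$ is also viable and bypasses the representation theorem, provided you add the interpolation step $\rho(\eee^{N+1})\sim\rho(\eee^N)$ (which follows since $\ell(\eee^N)=o\bigl(\sum_{n\le N}\ell(\eee^n)\bigr)$ in both regimes) to pass from the subsequence to all $t\to\infty$.
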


\subsection{Asymptotic behavior of $\me K_j(t)$ and $\var K_j(t)$}

Given next is a collection of results on the asymptotics of $\me K_j(t)$ and $\var K_j(t)$ taken from Lemma~6.5 of \cite{Buraczewski+Iksanov+Kotelnikova:2023+}. Recall that $\Pi_{\ell,\,\infty}$ denotes the subclass of the de Haan class $\Pi$ with the auxiliary functions $\ell$, see~\eqref{eq:deHaan}, satisfying $\lim_{t\to\infty}\ell(t)=\infty$.
\begin{lemma}\label{lem:old}
	Assume that $\rho\in \Pi_{\ell,\,\infty}$. Then, for each $j\in\mn$,
	\begin{equation}\label{eq:mean0old}
		\me K_j(t)~\sim~ \rho(t),\quad t\to\infty
	\end{equation}
	and
	\begin{equation*}
		{\rm Var}\,K_j(t)~\sim~ \Big(\log 2- \sum_{k=1}^{j-1}\frac{(2k-1)!}{(k!)^2 2^{2k}}\Big)\ell(t),\quad t\to\infty.
	\end{equation*}
	
	Assume that $\rho(t)\sim t^\alpha L(t)$ as $t\to\infty$ for some $\alpha\in(0,1]$ and some $L$ slowly varying at $\infty$. If $\alpha\in(0,1)$ and $j\in\mn$ or $\alpha=1$ and $j\geq 2$, then, as $t\to\infty$,
	\begin{equation}\label{eq:oldmomincreas}
		\me K_j(t)~\sim~\frac{\Gamma(j-\alpha)}{(j-1)!}\rho(t),
	\end{equation}
	and
	\begin{equation}\label{eq:varold}
		\lim_{t\to\infty}\frac{{\rm Var}\, K_j(t)}{\rho(t)}=\Big(\sum_{i=0}^{j-1}\frac{\Gamma(i+j-\alpha)}{i!(j-1)!2^{i+j-1-\alpha}}-\frac{\Gamma(j-\alpha)}{(j-1)!}\Big)>0.
	\end{equation}
	
	If $\alpha=1$, then
	\begin{equation}\label{eq:varalone}
		{\rm Var}\, K_1(t)~\sim~ \me K_1(t)~\sim~ t\hat L(t),\quad t\to\infty.
	\end{equation}
\end{lemma}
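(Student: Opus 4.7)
My plan rests on the basic Poissonization identity: in the Poissonized scheme, box $k$ receives an independent Poisson number of balls with mean $p_kt$, so with $g_j(x):=\mmp(\text{Poi}(x)\geq j)=\int_0^x u^{j-1}\eee^{-u}/(j-1)!\,\dd u$,
$$\me K_j(t)=\sum_{k\geq 1}g_j(p_kt),\qquad \var K_j(t)=\sum_{k\geq 1}g_j(p_kt)(1-g_j(p_kt)).$$
I would then convert these series into integrals with respect to $\rho$. Using the representation of $g_j$ as an integral of the gamma density and swapping sum and integral gives
$$\me K_j(t)=\int_0^\infty \frac{u^{j-1}\eee^{-u}}{(j-1)!}\,\rho(t/u)\,\dd u,$$
and a similar (double) integral for $\var K_j(t)$, where the variance integrand carries a factor $1-g_j(p_kt)$ that can be expanded using $g_j(x)(1-g_j(x))=g_j(x)-g_j(x)^2$ and handled by the same device applied to $g_j^2$.

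With these integral representations in hand, I would split into the three regimes. For $\rho(t)\sim t^\alpha L(t)$ with $\alpha\in(0,1)$, or $\alpha=1$ and $j\geq 2$, I would apply Karamata's theorem together with the uniform convergence theorem for slowly varying functions to pull $t^\alpha L(t)$ outside the integral, leaving $\int_0^\infty u^{j-1-\alpha}\eee^{-u}\,\dd u/(j-1)!=\Gamma(j-\alpha)/(j-1)!$; this yields \eqref{eq:oldmomincreas} and, after analogous bookkeeping on the variance formula using $g_j(x)^2$ whose Karamata constant is the doubled-gamma correction, relation \eqref{eq:varold}. For $\rho\in\Pi_{\ell,\,\infty}$ the function $\rho$ is slowly varying, so the same integral representation combined with the de Haan refinement $\rho(t/u)=\rho(t)-\ell(t)\log u+o(\ell(t))$ (uniform on compacts, with a polynomial Potter-type bound handling the tails) yields $\me K_j(t)\sim\rho(t)$ because $\int u^{j-1}\eee^{-u}\log u\,\dd u$ is a finite constant absorbed by the subleading $\ell(t)$ term. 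The variance is then exactly at the $\ell(t)$ scale, and its constant is obtained by evaluating the resulting explicit integrals against $g_j$ and $g_j^2$; this reconstruction of the constant $\log 2-\sum_{k=1}^{j-1}(2k-1)!/((k!)^22^{2k})$ is the most painful bookkeeping step but mechanical.

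The genuine obstacle sits in the boundary case $\alpha=1$, $j=1$. Here the formal Karamata constant is $\Gamma(0)=+\infty$, reflecting the divergence of $\int_0^\infty u^{-1}\eee^{-u}\,\dd u$ near the origin. To circumvent it I would work from the alternative expression
$$\me K_1(t)=\int_0^\infty(1-\eee^{-t/v})\,\dd\rho(v)=\int_0^\infty \eee^{-y}\rho(t/y)\,\dd y\cdot(\text{after an integration by parts with }y=t/v),$$
or, what is essentially the same, from $\me K_1(t)=\int_0^\infty \eee^{-u}\rho(t/u)\,\dd u/u$ written so that the divergence at zero is exposed. Splitting the integration at $u=1$, the tail gives a contribution $O(L(t))$ while the near-zero part is controlled, via a change of variables $y=t/u$, by $\int_t^\infty y^{-1}L(y)\,\dd y=\hat L(t)$, producing $\me K_1(t)\sim t\hat L(t)$. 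The relation $L(t)/\hat L(t)\to 0$, which is precisely \eqref{eq:Lhat} in the statement the lemma ultimately supports, is needed to justify that the error terms are negligible. The variance matches the mean asymptotically because $g_1(p_kt)=1-\eee^{-p_kt}$ is $o(1)$ on the relevant range of $k$, so $g_1(1-g_1)\sim g_1$ uniformly; this argument mirrors the classical Karlin computation and completes the proof of \eqref{eq:varalone}.
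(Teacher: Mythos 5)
There is nothing in this paper to compare your argument against: Lemma \ref{lem:old} is not proved here at all, but imported verbatim from Lemma 6.5 of \cite{Buraczewski+Iksanov+Kotelnikova:2023+} (the section merely restates it "for ease of reference"). Your reconstruction follows the classical Karlin route, which is also the one behind the cited source: write $\me K_j(t)=\sum_{k\ge1}\mmp\{\pi_k(t)\ge j\}$ and $\var K_j(t)=\sum_{k\ge1}\mmp\{\pi_k(t)\ge j\}(1-\mmp\{\pi_k(t)\ge j\})$, convert the sums into integrals against $\rho$ via $g_j(x)=\int_0^x u^{j-1}\eee^{-u}\,\dd u/(j-1)!$, use Karamata plus Potter bounds when $\rho$ is regularly varying of index $\alpha\in(0,1)$ (or $\alpha=1$, $j\ge2$), use the de Haan expansion $\rho(t/u)=\rho(t)-\ell(t)\log u+o(\ell(t))$ (with the global bounds for $\Pi$-varying functions and the fact $\ell(t)=o(\rho(t))$, needed to absorb the correction term into \eqref{eq:mean0old}) when $\rho\in\Pi_{\ell,\,\infty}$, and treat $\alpha=1$, $j=1$ separately through $\hat L$. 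The constants your route produces do match: e.g.\ for $j=1$, $\alpha\in(0,1)$ your "doubled-gamma" correction gives $\Gamma(1-\alpha)-2^{\alpha}\Gamma(1-\alpha)(2^{1-\alpha}-1)=(2^{\alpha}-1)\Gamma(1-\alpha)$, in agreement with \eqref{eq:varold}, and the de Haan second-order term reproduces $\log 2$ for $\var K_1$. So in substance the sketch is sound. Three small slips to repair in a write-up: the display $\int_0^\infty \eee^{-u}\rho(t/u)\,\dd u/u$ carries a spurious $1/u$ (your other formula $\int_0^\infty\eee^{-u}\rho(t/u)\,\dd u$ is the correct $j=1$ case); the contribution of $u\ge1$ there is $O(\rho(t))=O(tL(t))$, not $O(L(t))$, and is negligible only because of \eqref{eq:Lhat}; and the claim that $g_1(1-g_1)\sim g_1$ "uniformly" is not literally true (boxes with $p_kt$ large have $g_1\approx1$) --- the clean statement is $\sum_{k\ge1}(1-\eee^{-p_kt})^2=O(tL(t))=o(t\hat L(t))$, again by \eqref{eq:Lhat}, which gives \eqref{eq:varalone}. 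You also leave the positivity of the constant in \eqref{eq:varold} unaddressed, but that is a minor verification.
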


\subsection{Asymptotic behavior of $\me K^*_j(t)$ and $\var K^*_j(t)$}

For $j\in\mn$, the asymptotics of $t\mapsto \me K^*_j(t)$ as stated in Theorems \ref{thm:Karlin0}, \ref{thm:Karlin} and \ref{thm:Karlin1} can be found in Lemma 6.5 of \cite{Buraczewski+Iksanov+Kotelnikova:2023+}. Next, we show that, for $j\in\mn$, the functions $t\mapsto \var K^*_j(t)$ exhibit the asymptotics given in the aforementioned theorems.

\begin{lemma}\label{var}
	Assume that $\rho\in \Pi_{\ell,\,\infty}$. Then, for each $j\in\mn$,
	\begin{equation}\label{eq:var0}
		{\rm Var}\,K^*_j(t)~\sim~ \Big(\frac{1}{j}- \frac{(2j-1)!}{(j!)^2 2^{2j}}\Big)\ell(t),\quad t\to\infty.
	\end{equation}
	
	Assume that $\rho(t)\sim t^\alpha L(t)$ as $t\to\infty$ for some $\alpha\in(0,1]$ and some $L$ slowly varying at $\infty$. If $\alpha\in(0,1)$ and $j\in\mn$ or $\alpha=1$ and $j\geq 2$, then, as $t\to\infty$,
	\begin{equation}\label{eq:var}
		\lim_{t\to\infty}\frac{{\rm Var}\, K^*_j(t)}{t^\alpha L(t)}=c_{j,\,\alpha}>0
	\end{equation}
	with $c_{j,\,\alpha}$ as defined in \eqref{eq:cj} and \eqref{eq:cj1}.
	
	If $\alpha=1$, then
	\begin{equation}\label{eq:var1}
		{\rm Var}\, K^*_1(t)~\sim~ t\hat L(t),\quad t\to\infty.
	\end{equation}
\end{lemma}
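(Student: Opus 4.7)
The central observation is that, by the thinning property of the Poisson process, the occupancy count of box $k$ at time $t$ is Poisson distributed with parameter $p_k t$ and these counts are independent across $k$. Hence the indicators in~\eqref{eq:kjast} are independent Bernoulli variables with success probabilities $q_k(t):=\eee^{-p_kt}(p_kt)^j/j!$, and consequently
\begin{equation*}
	\var K^*_j(t)=\sum_{k\geq 1}q_k(t)(1-q_k(t))=\me K^*_j(t)-\sum_{k\geq 1}q_k(t)^2.
\end{equation*}
The sum of squares admits a clean rewriting: substituting $u=2p_kt$ one obtains
\begin{equation*}
	q_k(t)^2=\eee^{-2p_kt}\frac{(p_kt)^{2j}}{(j!)^2}=\frac{(2j)!}{2^{2j}(j!)^2}\cdot\eee^{-(2t)p_k}\frac{((2t)p_k)^{2j}}{(2j)!},
\end{equation*}
so that
\begin{equation*}
	\var K^*_j(t)=\me K^*_j(t)-\frac{(2j)!}{2^{2j}(j!)^2}\,\me K^*_{2j}(2t).
\end{equation*}

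With this identity in hand the three asymptotic statements reduce to plugging in the known asymptotics of the means (which are the asymptotics appearing in Theorems~\ref{thm:Karlin0},~\ref{thm:Karlin},~\ref{thm:Karlin1}, and which, as noted at the beginning of the subsection, are recorded in Lemma~6.5 of~\cite{Buraczewski+Iksanov+Kotelnikova:2023+}), together with slow variation to handle the argument $2t$. In the de Haan regime, $\me K^*_j(t)\sim\ell(t)/j$ and $\me K^*_{2j}(2t)\sim\ell(2t)/(2j)\sim\ell(t)/(2j)$, giving \eqref{eq:var0} after simplifying $(2j)!/(2j)=(2j-1)!$. In the case $\rho(t)\sim t^\alpha L(t)$ with $\alpha\in(0,1)$, or $\alpha=1$ and $j\geq 2$, using $\me K^*_j(t)\sim\alpha\,\Gamma(j-\alpha)/j!\cdot t^\alpha L(t)$ and the analogous expression for $\me K^*_{2j}(2t)$, together with $(2t)^\alpha L(2t)\sim 2^\alpha t^\alpha L(t)$, yields precisely the constant $c_{j,\,\alpha}$ defined in~\eqref{eq:cj}; specialization to $\alpha=1$, $j\geq 2$ recovers $c_{j,\,1}$ in \eqref{eq:cj1}.

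The only slightly delicate case is $\alpha=1$, $j=1$, where the two terms are asymptotically comparable only up to the gap between $L$ and $\hat L$. Here the identity gives
\begin{equation*}
	\var K^*_1(t)=\me K^*_1(t)-\tfrac{1}{2}\me K^*_2(2t)\sim t\hat L(t)-\tfrac12 tL(t),
\end{equation*}
after using $\me K^*_1(t)\sim t\hat L(t)$ from \eqref{eq:exo} and $\me K^*_2(2t)\sim \tfrac{1}{2\cdot 1}(2t)L(2t)\sim tL(t)$ from \eqref{eq:meK1}. The hypothesis \eqref{eq:Lhat} forces $L(t)=o(\hat L(t))$, so the subtracted term is negligible and \eqref{eq:var1} follows. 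This last step is the only place where one has to think; everywhere else the proof is a bookkeeping exercise with the displayed identity.
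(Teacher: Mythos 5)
Your argument is correct and is essentially the paper's own route: the identity you derive from Poisson thinning, $\var K^*_j(t)=\me K^*_j(t)-2^{-2j}\binom{2j}{j}\me K^*_{2j}(2t)$, is exactly formula (6) of \cite{Gnedin+Hansen+Pitman:2007}, recorded in the paper as \eqref{eq:equation_var}, and the three asymptotics are then obtained, as you do, by inserting the mean asymptotics \eqref{eq:meK0}, \eqref{eq:meKgeneral}, \eqref{eq:meK1}, \eqref{eq:exo} together with slow variation to pass from $2t$ to $t$; your treatment of the delicate case $\alpha=1$, $j=1$ via $L(t)=o(\hat L(t))$ is also the paper's. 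The only structural deviation is \eqref{eq:var0}: the paper quotes formula (11) of \cite{Iksanov+Kotelnikova:2022} with $u=v=0$, whereas you deduce it from the identity plus \eqref{eq:meK0} and slow variation of $\ell$, which is a perfectly valid and arguably more self-contained alternative. Two points you pass over in silence that the paper does prove: first, the positivity of the limiting constants — \eqref{eq:var} asserts $c_{j,\,\alpha}>0$, and the ``$\sim$'' in \eqref{eq:var0} is only meaningful because $\tfrac1j-\tfrac{(2j-1)!}{(j!)^2 2^{2j}}>0$; the paper verifies this by showing $2^{\alpha}\Gamma(2j-\alpha)/(2^{2j}j!\Gamma(j-\alpha))<1$ via a comparison with $(2j-1)!/\big((2j)!!\,(2j-2)!!\big)<1$. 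Second, in the case $\alpha=1$, $j=1$ you invoke \eqref{eq:Lhat} as a ``hypothesis'', but the well-definedness of $\hat L$, its slow variation and \eqref{eq:Lhat} are themselves established in the paper's proof (from Lemma 3 of \cite{Karlin:1967} and Proposition 1.5.9b of \cite{Bingham+Goldie+Teugels:1989}). Both omissions are easy to repair, so they are minor gaps rather than flaws in the approach.
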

\begin{proof}
	Assume that $\rho\in\Pi_{\ell,\,\infty}$. Putting $u=v=0$ in formula (11) of \cite{Iksanov+Kotelnikova:2022} we obtain \eqref{eq:var0}.
	
	According to formula (6) in \cite{Gnedin+Hansen+Pitman:2007},
	\begin{equation}\label{eq:equation_var}
		\var K^*_j(t)=\me K^*_j(t)-2^{-2j}\binom{2j}{j}\me K^*_{2j}(2t), \quad t\ge 0, j\in\mn.
	\end{equation}
	Note that \eqref{eq:equation_var} does not require even regular variation assumption on $\rho$.
	
	Assume that $\rho$ is regularly varying at $\infty$ of index $\alpha=1$. We first discuss the properties of the function $\hat L$ stated in Theorem \ref{thm:Karlin1}. By Lemma 3 in \cite{Karlin:1967},
	$\lim_{t\to\infty}t^{-1}\rho(t)=0$
	and $\int_1^\infty y^{-2}\rho(y){\rm d}y\leq 1$. This implies that the function $\hat L(t)=\int_t^\infty y^{-1}L(y){\rm d}y$ is well-defined for large $t$ and thereupon
	$\lim_{t\to\infty}\hat L(t)=0$. According to Proposition 1.5.9b \cite{Bingham+Goldie+Teugels:1989}, $\hat{L}$ is slowly varying at $\infty$ and satisfies~\eqref{eq:Lhat}.
	This in combination with \eqref{eq:meK1}, \eqref{eq:exo} and \eqref{eq:equation_var} entails
	\eqref{eq:var1}.
	
	Assume now $\alpha\in(0,1)$ and $j\in\mn$ or $\alpha=1$ and $j\geq 2$. Then invoking \eqref{eq:equation_var} and either~\eqref{eq:meKgeneral} or \eqref{eq:meK1} we obtain
	$$
	\lim_{t\to\infty}\frac{\var K^*_j(t)}{t^\alpha L(t)}=\lim_{t\to\infty}\frac{\me K^*_j(t)}{t^\alpha L(t)}-2^{-2j}\binom{2j}{j}\lim_{t\to\infty}\frac{\me K^*_{2j}(2t)}{t^\alpha L(t)}=c_{j,\,\alpha}.
	$$
	We are left with showing
	that the constants $c_{j,\,\alpha}$ are positive for $\alpha\in (0,1)$ and $j\in\mn$ and $\alpha=1$ and $j\geq 2$ or equivalently
	$$
	\frac{2^{\alpha}\Gamma(2j-\alpha)}{2^{2j}j!\Gamma(j-\alpha)}<1.
	$$
	This is a consequence of
	$$
	\frac{2^{\alpha}\Gamma(2j-\alpha)}{2^{2j}j!\Gamma(j-\alpha)}<\frac{2\,(2j-1)!}{2^{2j}j!(j-1)!}=\frac{(2j-1)!}{(2j)!!(2j-2)!!}<1,
	$$
	where $(2n)!!:=2\cdot 4\cdot\ldots\cdot (2n)$ for $n\in\mn$. Here, the last inequality is justified with the help of mathematical induction. The proof of Lemma \ref{var} is complete.
\end{proof}

\subsection{Proof of Theorems \ref{thm:Karlin0}, \ref{thm:Karlin} and \ref{thm:Karlin1}}

For $k\in\mn$ and $t\geq 0$, denote by $\pi_k(t)$ the number of balls in box $k$ at time $t$ in the Poissonized version. It has already been mentioned in Section \ref{karlin} that
the thinning property of Poisson processes implies that the processes $(\pi_1(t))_{t\geq 0}$, $(\pi_2(t))_{t\geq 0},\ldots$ are independent. Moreover, for $k\in\mn$, $(\pi_k(t))_{t\geq 0}$ is a Poisson process with intensity $p_k$. As a consequence, both $K_j^*(t)$ and $K_j(t)$ can be represented as the sums of independent indicators
$$
K_j^*(t)=\sum_{k=1}\1_{\{\pi_k(t)=j\}}\quad\text{and}\quad K_j(t)=\sum_{k=1}\1_{\{\pi_k(t)\ge j\}},\quad t\ge 0, j\in\mn.
$$
Hence, it is reasonable to prove the desired LILs for the small counts by applying Theorem~\ref{thm:main}.

As a preparation, we start with a lemma which facilitates
checking condition (B22) of Theorem \ref{thm:main}.

\begin{lemma}\label{lem:karl}
	Assume that either $\rho\in \Pi_{\ell,\,\infty}$ or $\rho$ is regularly varying at $\infty$ of index $\alpha\in(0,1]$. If $\rho\in\Pi_{\ell,\,\infty}$ and $j\in\mn$ or $\alpha\in (0,1)$ and $j\in\mn$ or $\alpha=1$ and $j\geq 2$, then for any positive functions $c$ and $d$ satisfying $\lim_{t\to\infty} c(t)=\infty$, $\lim_{t\to\infty}(c(t)/t)=0$ and $\lim_{t\to\infty}(d(t)/t)=\infty$,
	\begin{equation*}\label{eq:cutvar}
		{\rm Var}\Big(\sum_{k\ge 1} \1_{\{c(t)<1/p_k\leq d(t)\}}\1_{\{\pi_k(t)= j\}}\Big) ~\sim~ {\rm Var}\,K_j^*(t),\quad t\to\infty.
	\end{equation*}
\end{lemma}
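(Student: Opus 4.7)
The plan is to exploit the independence of $(\1_{\{\pi_k(t)=j\}})_{k\ge 1}$ (from Poisson thinning) to write the variance as $\sum_k q_k(t)(1-q_k(t))$ with $q_k(t):=\mmp(\pi_k(t)=j)=(p_kt)^j\eee^{-p_kt}/j!$, and to observe that, using $q_k(1-q_k)\le q_k$,
$$
0\le \var K_j^*(t)-\var\Big(\sum_{k\ge 1}\1_{\{c(t)<1/p_k\le d(t)\}}\1_{\{\pi_k(t)=j\}}\Big)\le \sum_{k:\,1/p_k\le c(t)}q_k(t)+\sum_{k:\,1/p_k>d(t)}q_k(t).
$$
Since Lemma \ref{var} together with \eqref{eq:meK0}, \eqref{eq:meKgeneral} and \eqref{eq:meK1} show that $\var K_j^*(t)/\me K_j^*(t)$ converges to a strictly positive constant in each of the three regimes of the statement, the task is reduced to bounding the two ``tail means'' above by $o(\me K_j^*(t))$.

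\textbf{High-probability tail.} For indices with $p_k\ge 1/c(t)$ one has $p_kt\ge t/c(t)\to\infty$. I would use the elementary inequality $y^j\eee^{-y}/j!\le \eee^{-y/2}$ (valid for large $y$) to get $q_k(t)\le \eee^{-t/(2c(t))}$, and then bound the sum by $\rho(c(t))\eee^{-t/(2c(t))}$. This decays faster than any power of $t$ and is hence trivially $o(\me K_j^*(t))$ in all three regimes.

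\textbf{Low-probability tail in the regularly-varying case.} Represent the tail as a Stieltjes integral $\frac{1}{j!}\int_{d(t)}^\infty (t/r)^j \eee^{-t/r}\dd\rho(r)$ and use $\eee^{-t/r}=1+O(t/d(t))$ for $r>d(t)$. Assuming $\alpha\in(0,1)$ with $j\in\mn$ or $\alpha=1$ with $j\ge 2$, so that $j-\alpha>0$, integration by parts (noting $r^{-j}\rho(r)\to 0$) combined with Karamata's theorem applied to $\int_{d(t)}^\infty r^{-j-1}\rho(r)\dd r$ would yield
$$
\int_{d(t)}^\infty r^{-j}\dd\rho(r)~\sim~\frac{\alpha}{j-\alpha}\cdot\frac{\rho(d(t))}{d(t)^j},\quad t\to\infty.
$$
Comparing the resulting tail estimate with $\me K_j^*(t)\sim(\alpha\Gamma(j-\alpha)/j!)t^\alpha L(t)$ produces a ratio of order $(t/d(t))^{j-\alpha}L(d(t))/L(t)$, which tends to $0$ by slow variation of $L$ and $t/d(t)\to 0$.

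\textbf{The de Haan case and the main obstacle.} The hard part will be the case $\rho\in\Pi_{\ell,\,\infty}$, where the prefactor $\alpha/(j-\alpha)$ in the Karamata estimate vanishes, so the above bound becomes too crude and one must track the first-order correction. The plan is to substitute $u=t/r$ and integrate by parts to reduce the tail to the analysis of $\int_0^{\epsilon_t}\rho(t/u)u^{j-1}\eee^{-u}(j-u)\dd u$, with $\epsilon_t:=t/d(t)\to 0$, then invoke the de Haan expansion $\rho(d(t)/v)=\rho(d(t))-\ell(d(t))\log v+o(\ell(d(t)))$ after the further rescaling $v=u/\epsilon_t$. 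The $\rho(d(t))$ piece produced in this way exactly cancels the boundary term $-\epsilon_t^j\eee^{-\epsilon_t}\rho(d(t))$ generated by the integration by parts, leaving a residual of order $\epsilon_t^j\ell(d(t))$. Since $\ell$ is slowly varying, $\ell(d(t))/\ell(t)$ grows slower than any power of $d(t)/t$ while $\epsilon_t^j\to 0$, so this residual is $o(\ell(t))=o(\me K_j^*(t))$. Extracting this next-order cancellation from the de Haan property is the principal technical point of the lemma.
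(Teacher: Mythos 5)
Your reduction is fine: by independence the discarded variance equals $\sum_{k\notin R_0(t)}q_k(t)(1-q_k(t))\le\sum_{k\notin R_0(t)}q_k(t)$, and since $\var K_j^*(t)/\me K_j^*(t)$ tends to a positive constant in all three regimes, it suffices to show the two tail means are $o(\me K_j^*(t))$. (For comparison, the paper takes a different shortcut: it bounds $\var\1_{\{\pi_k(t)=j\}}\le\var\1_{\{\pi_k(t)\ge j\}}+\var\1_{\{\pi_k(t)\ge j+1\}}$ and then simply invokes the truncated-variance estimates (79), (80), (86), (87) already proved for $K_j$ in \cite{Buraczewski+Iksanov+Kotelnikova:2023+}, so all the hard analysis is outsourced.) Your low-probability tail is plausible: in the regularly varying case the Karamata computation is correct, and in the de Haan case the cancellation scheme you outline, finished off with Potter's bound $\ell(d(t))/\ell(t)=O((d(t)/t)^{\delta})$, would indeed give $o(\ell(t))$ — though you only sketch this step.

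The genuine gap is the \emph{high-probability tail in the de Haan case}. Your bound
$$\sum_{k:\,1/p_k\le c(t)}q_k(t)\ \le\ \rho(c(t))\,\eee^{-t/(2c(t))}$$
does not "decay faster than any power of $t$": the hypotheses only give $t/c(t)\to\infty$, possibly arbitrarily slowly. Take $c(t)=t/\log\log\log t$ and $\rho(t)\sim(\log t)^{\beta+1}$ (so $\rho\in\Pi_{\ell,\,\infty}$ with $\ell(t)\sim(\beta+1)(\log t)^{\beta}$ and $\var K_j^*(t)\asymp(\log t)^{\beta}$): then $\rho(c(t))\,\eee^{-t/(2c(t))}$ is of order $(\log t)^{\beta+1}(\log\log t)^{-1/2}$, which is \emph{not} $o(\ell(t))$. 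The count-times-maximum bound is fatally lossy precisely when $\rho(t)/\ell(t)\to\infty$, i.e.\ in the $\alpha=0$ regime (it is harmless when $\alpha\in(0,1]$, since there $\var K_j^*(t)\asymp\rho(t)$). The statement is still true, but to see it you must keep the sum as an integral, e.g.
$$\sum_{k:\,1/p_k\le c(t)}q_k(t)=\frac{1}{j!}\int_{(1,\,c(t)]}\eee^{-t/y}\Big(\frac{t}{y}\Big)^{j}\,{\rm d}\rho(y),$$
and exploit the exponential decay in $u=t/y$ together with the de Haan/slow-variation structure of $\rho$ (the mass localizes near $y=c(t)$ and one gets a factor $\eee^{-t/c(t)}(t/c(t))^{j-1}$ multiplying a quantity of order $\ell$, not of order $\rho$). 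This finer estimate is exactly what the formulae cited by the paper provide; without it your proof does not cover the case $\rho\in\Pi_{\ell,\,\infty}$.
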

\begin{proof}
	We start by proving a simple but an important inequality.
	Since
	\begin{equation*}
		\cov (\1_{\{\pi_k(t)\ge j\}}, \1_{\{\pi_k(t)\ge j+1\}})=\mmp\{\pi_k(t)\ge j+1\}-\mmp\{\pi_k(t)\ge j\}\mmp\{\pi_k(t)\ge j+1\}\ge 0,
	\end{equation*}
	we infer
	\begin{multline*}
		\var(\1_{\{\pi_k(t)=j\}})=\var(\1_{\{\pi_k(t)\ge j\}}-\1_{\{\pi_k(t)\ge j+1\}})\\=\var(\1_{\{\pi_k(t)\ge j+1\}})+\var(\1_{\{\pi_k(t)\ge j\}})-2\cov (\1_{\{\pi_k(t)\ge j\}}, \1_{\{\pi_k(t)\ge j+1\}})\\\le \var\1_{\{\pi_k(t)\ge j+1\}}+\var\1_{\{\pi_k(t)\ge j\}}.
	\end{multline*}
	
	Therefore, it is enough to show that in the setting of the lemma, for all $j\geq 2$ in the case $\alpha=1$ and for all $j\in\mn$ in the other cases,
	\begin{equation}\label{eq:a}
		{\rm Var}\,\Big(\sum_{k\ge 1} \1_{\{1/p_k> d(t)\}}\1_{\{\pi_k(t)\ge j\}}\Big)= o(\var K_j^*(t)), \quad t\to\infty
	\end{equation}
	and
	\begin{equation}\label{eq:b}
		{\rm Var}\,\Big(\sum_{k\ge 1} \1_{\{1/p_k\leq c(t)\}}\1_{\{\pi_k(t)\ge j\}}\Big)=o(\var K_j^*(t)), \quad t\to\infty.
	\end{equation}
	According to formulae (86), (87), (79) and (80) in \cite{Buraczewski+Iksanov+Kotelnikova:2023+},
	\begin{equation}\label{eq:a121}
		{\rm Var}\,\Big(\sum_{k\ge 1} \1_{\{1/p_k> d(t)\}}\1_{\{\pi_k(t)\geq j\}}\Big)= o(\ell(t)), \quad t\to\infty,
	\end{equation}
	\begin{equation}\label{eq:b121}
		{\rm Var}\,\Big(\sum_{k\ge 1} \1_{\{1/p_k\leq c(t)\}}\1_{\{\pi_k(t)\geq j\}}\Big)=o(\ell(t)), \quad t\to\infty,
	\end{equation}
	\begin{equation}\label{eq:aold}
		{\rm Var}\,\Big(\sum_{k\ge 1} \1_{\{1/p_k> d(t)\}}\1_{\{\pi_k(t)\geq j\}}\Big)= o(\rho(t)), \quad t\to\infty
	\end{equation}
	and
	\begin{equation}\label{eq:bold}
		{\rm Var}\,\Big(\sum_{k\ge 1} \1_{\{1/p_k\leq c(t)\}}\1_{\{\pi_k(t)\geq j\}}\Big)=o(\rho(t)), \quad t\to\infty.
	\end{equation}
	In view of \eqref{eq:var0} or \eqref{eq:var}, depending on the setting, relations \eqref{eq:a121} or \eqref{eq:aold}, \eqref{eq:b121} or \eqref{eq:bold} are equivalent to \eqref{eq:a} and \eqref{eq:b}.
\end{proof}

\begin{proof}[Proof of Theorems \ref{thm:Karlin0}, \ref{thm:Karlin} and \ref{thm:Karlin1}]
	We first prove Theorem \ref{thm:Karlin1} in the case $j=1$. This setting is much simpler than the others, for the LIL for
	\begin{equation}\label{eq:equ}
		K^*_1(t)=K_1(t)-K_2(t)
	\end{equation}
	can be derived from the already available LILs for $K_1(t)$ and $K_2(t)$.
	
	The statements of Theorem \ref{thm:Karlin1} concerning the function $\hat L$ has already been justified in the proof of Lemma \ref{var}. According to \eqref{eq:varalone} and \eqref{eq:var1}, $\var K^*_1(t)\sim\var K_1(t)\sim t\hat L(t)$ as $t\to\infty$. Invoking the latter relation, \eqref{eq:varold} and \eqref{eq:Lhat} we conclude that, $\var K_2(t)\sim 2^{-1}tL(t)=o(\var K_1(t))$ as $t\to\infty$.
	By Theorem~3.4 and Remark 1.7 in \cite{Buraczewski+Iksanov+Kotelnikova:2023+}, $$\limsup_{t\to\infty}(\liminf_{t\to\infty})\frac{K_2(t)-\me K_2(t)}{(\var K_2(t)\log\log\var K_2(t))^{1/2}}=2^{1/2}~(-2^{1/2})\quad\text{a.s.}$$
	As a consequence, $K_2(t)-\me K_2(t)=o((\var K_1(t)\log\log\var K_1(t))^{1/2})$ a.s. as $t\to\infty$. Now, in view of \eqref{eq:equ},
	\begin{multline*}
		\limsup_{t\to\infty} (\liminf_{t\to\infty}) \frac{K^*_1(t)-\me K^*_1(t)}{(\var K^*_1(t)\log\log\var K^*_1(t))^{1/2}}\\=\limsup_{t\to\infty} (\liminf_{t\to\infty})\frac{K_1(t)-\me K_1(t)}{(\var K_1(t)\log\log\var K_1(t))^{1/2}}\quad\text{a.s.}
	\end{multline*}
	Armed with this, the claim of Theorem \ref{thm:Karlin1} in the case $j=1$ is secured by Theorem 3.4 in \cite{Buraczewski+Iksanov+Kotelnikova:2023+}. Indeed, the theorem states that depending on whether relation
	\eqref{eq:exotic} holds or not, the right-hand side is either equal to $2^{1/2}$ ($-2^{1/2}$) or is not larger than $2^{1/2}$ (not smaller than $-2^{1/2}$) a.s.

	In the remaining part of the proof we treat simultaneously
	Theorems \ref{thm:Karlin0} and \ref{thm:Karlin} and the case $j\geq 2$ of Theorem \ref{thm:Karlin1}.
	It has already been announced that our plan is to derive the LILs from Theorem \ref{thm:main}.
	Hence, now we work towards checking the conditions of the aforementioned theorem in the present setting.
	
	\noindent {\sc Condition (A1)}
	holds according to \eqref{eq:var0} in conjunction with $\lim_{t\to\infty}\ell(t)=\infty$,
	\eqref{eq:var} and~\eqref{eq:var1}.
	
	\noindent {\sc Condition (A2)} is justified by a representation $\1_{\{\pi_k(t)=j\}}=\1_{\{\pi_k(t)\ge j\}}-\1_{\{\pi_k(t)\ge j+1\}}$ a.s., for all $k,j\in\mn$ and $t\ge 0$, see Remark \ref{rem:A2_ex}. The corresponding function $f$ is given by
	\begin{multline}\label{eq:f_remark}
		f_{j,\,\alpha}(t):=\sum_{k\ge 1}(\mmp\{\pi_k(t)\ge j\}+\mmp\{\pi_k(t)\ge j+1\})=\me K_j(t)+\me K_{j+1}(t)\\=\sum_{k\ge 1} \Big(1-\sum_{i=0}^{j-1} \eee^{-p_kt}\frac{(p_kt)^i}{i!}\Big)+\sum_{k\ge 1} \Big(1-\sum_{i=0}^{j} \eee^{-p_kt}\frac{(p_kt)^i}{i!}\Big).
	\end{multline}
	We bring out the dependence on $j$ and $\alpha$ to distinguish the so defined functions for the different settings. By
	the same reasoning, we write $a_{j,\,\alpha}$ instead of $a$, where $a(t)=\var K_j^*(t)$ for $t\geq 0$.
	
	\noindent {\sc Condition (A3).} Assume first that $\rho\in\Pi_{\ell,\,\infty}$. According to \eqref{eq:mean0old} and \eqref{eq:var0}, for each $j\in\mn$, $f_{j,\,0}(t)\sim 2\rho(t)$ and $a_{j,\,0}(t)\sim C\ell(t)$ as $t\to\infty$, respectively.
	Here and hereafter, $C$ denotes a constant whose
	value is of importance and may vary from formula to formula. Under \eqref{eq:slowly}, invoking \eqref{eq:rho} we conclude
	that~(A3) holds with $\mu=1/\beta+1$. Under \eqref{eq:slowly2}, using~\eqref{eq:rho2} we infer
	$\mu=1$. Thus, we have to check the additional conditions pertaining to the case $\mu=1$. First, the function
	$f_{j,\alpha}$ is continuous. Second, $q=1/\lambda-1$ and $\mathcal{L}(t)\equiv 1$ for all $t\geq 0$ by another appeal to \eqref{eq:rho2}.
	
	Assume now that $\alpha\in (0,1)$ and $j\in\mn$ or $\alpha=1$ and $j\ge 2$. Then, according to \eqref{eq:oldmomincreas} and \eqref{eq:var}, $f_{j,\,\alpha}(t)\sim C a_{j,\,\alpha}(t)$ as $t\to\infty$, which entails $\mu=1$. Further, $f_{j,\,\alpha}$ is continuous,
	$q=0$ and $\mathcal{L}(t)\equiv 1$ for $t\geq 0$.
	
	\noindent {\sc Condition (A4).} Denote by $a_{0;j,\,\alpha}$ a version of $a_0$ for the different settings. Assume first that $\rho\in\Pi_{\ell,\,\infty}$. Then, according to \eqref{eq:var0}, $a_{j,\,0}(t)\sim C \ell(t)$ as $t\to\infty$. Therefore, under~\eqref{eq:slowly}, $a_{0;j,\,0}$ can be chosen as a monotone equivalent of $t\mapsto C(\log t)^{\beta}l(\log t)$ which exists by Lemma \ref{lem:monotone}.
	Under \eqref{eq:slowly2}, $a_{0;j,0}$ can be chosen as $a_{0;j,\,0}(t):=C\exp(\sigma(\log t)^\lambda)$ for all $t\ge 1$.
	
	Assume now that $\alpha\in (0,1)$ and $j\in\mn$ or $\alpha=1$ and $j\ge 2$. Then, according to~\eqref{eq:var}, $a_{0;j,\,\alpha}$ can be chosen as a monotone equivalent of $t\mapsto c_{j,\,\alpha} t^\alpha L(t)$ which exists by
	Lemma~\ref{lem:monotone}.
	
	Thus, in all settings (A4) holds according to Remark \ref{rem:A4}.
	
	\noindent {\sc Condition (A5)} holds according to Remark \ref{suff}, for $f_{j,\,\alpha}$ is continuous and strictly increasing.
	
	\noindent {\sc Condition (B1)} holds in view of
	\begin{equation}\label{eq:small_var}
		a_{j,\,\alpha}(t)=\var K^*_j(t)=\sum_{k\ge 1} \eee^{-p_kt}\frac{(p_kt)^j}{j!}\Big(1-\eee^{-p_kt}\frac{(p_kt)^j}{j!}\Big),\quad t\geq 0
	\end{equation}
	which shows that $a_{j,\,\alpha}$ is a continuous function.
	
	\noindent {\sc Condition (B22).} For $t>1$, put $c(t):=t/\log t$ and $d(t):=t\log t$ and then
	$$R_0(t):= \{k\in\mn: c(t)<1/p_k\leq d(t)\}.
	$$
	By Lemma \ref{lem:karl}, in all settings relation \eqref{eq:one} which is the second part of (B22) holds.
	
	Passing to the first part of (B22), we are going to refer to the table below which contains all the necessary information. In the first line, we list the values of $\mu$ which have already been found while checking (A3). Recall that the definitions of $w_n(\gamma,\mu)$ and $\tau_n$ can be found right after formula \eqref{eq:tau} and in~\eqref{eq:tau}, respectively. 
	\noindent
	\begin{tabular}{|c|c|c|c|}
		\hline
		{\bf Setting} & {\bf $\rho\in\Pi_{\ell,\,\infty}$, \eqref{eq:slowly}} & $\rho\in\Pi_{\ell,\,\infty}$, \eqref{eq:slowly2} & $\alpha\in(0,1)$, $j\in\mn$ or $\alpha=1$, $j\ge 2$\\
		\hline
		$\mu$ & $1/\beta+1$ & $1$ & $1$\\
		\hline
		$w_n(\gamma,\mu)
		$ & $n^{\beta(1+\gamma)}$ & $\exp(n^{\lambda(1+\gamma)})$ & $\exp(n^{1+\gamma})$\\
		\hline
		$\tau_n\sim$ & $\eee^{n^{(1+\gamma)}}o(\eee^{n^{(1+\gamma)}})$ & $\eee^{\sigma^{-1/\lambda}n^{(1+\gamma)}}(1+o(1))$ & $\eee^{\alpha^{-1}n^{(1+\gamma)}}o(\eee^{\alpha^{-1}n^{(1+\gamma)}})$\\
		\hline
	\end{tabular}
	
	\bigskip
	
	\noindent We conclude that in all the settings $\tau_{n+1}/\tau_n$ diverges to $\infty$ superexponentially fast, whereas $\log\tau_n$ only grows polynomially fast. Hence, for large enough $n$, $c(\tau_{n+1})>d(\tau_n)$, which justifies the first part of (B22).
	\color{black}
	
	The proofs of Theorems \ref{thm:Karlin0}, \ref{thm:Karlin} and \ref{thm:Karlin1} are complete.
\end{proof}

\subsection{Proofs of Theorem \ref{thm:depoiss}}

We start with some preparatory work. It is known, see, for instance, Lemma~1 in \cite{Gnedin+Hansen+Pitman:2007}, that for any probability distribution $(p_k)_{k\in\mn}$ and
$j\in\mn$,
\begin{equation}\label{eq:mean_depois}
	\lim_{n\to\infty} |\me\mathcal{K}_j^*(n)-\me K_j^*(n)|=0.
\end{equation}
However, we are not aware of a counterpart of this relation for variances.
Proposition \ref{prop:var_depoiss} fills up this gap. Recall that $\rho\in\Pi_{\ell,\,\infty}$ means that $\rho\in\Pi$ and that its auxiliary function~$\ell$, see \eqref{eq:deHaan}, satisfies
$\lim_{t\to\infty}\ell(t)=\infty$.

\begin{assertion}\label{prop:var_depoiss}
	Assume that either $\rho\in \Pi_{\ell,\,\infty}$ or $\rho$ is regularly varying at $\infty$ of index $\alpha\in (0,1]$. Then, for $j\in\mn$,
	\begin{equation*}\label{eq:ratio}
		\lim_{n\to\infty} \frac{\var \mathcal{K}^*_j(n)}{\var K^*_j(n)}=1.
	\end{equation*}
\end{assertion}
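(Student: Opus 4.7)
The plan is to expand $\var\mathcal{K}_j^*(n)$ along the box indicators and compare it with $\var K_j^*(n)$ term by term. Writing $\mathcal{K}_j^*(n)=\sum_k\1_{B_k}$ with $B_k=\{N_{k,n}=j\}$ and $N_{k,n}$ the count of balls in box $k$ among the first $n$, I would set $\phi_k(n):=\mmp(B_k)=\binom{n}{j}p_k^j(1-p_k)^{n-j}$ and, for $k\ne l$, $q_{kl}(n):=\mmp(B_k\cap B_l)=\binom{n}{j,\,j,\,n-2j}p_k^jp_l^j(1-p_k-p_l)^{n-2j}$. The decomposition
\begin{equation*}
\var\mathcal{K}_j^*(n)=\sum_{k\ge 1}\phi_k(n)(1-\phi_k(n))+\sum_{k\ne l}\bigl(q_{kl}(n)-\phi_k(n)\phi_l(n)\bigr)=:I_n+J_n
\end{equation*}
separates a diagonal part from the off-diagonal covariance part; my aim will be $I_n\sim\var K_j^*(n)$ and $J_n=o(\var K_j^*(n))$.

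For $I_n$, I would use the algebraic identity $\sum_k\phi_k(n)^2=\binom{n}{j}^2\binom{2n}{2j}^{-1}\me\mathcal{K}_{2j}^*(2n)$ (immediate from the definitions) together with the elementary asymptotic $\binom{n}{j}^2/\binom{2n}{2j}\to 2^{-2j}\binom{2j}{j}$. Combining these with Lemma~1 of \cite{Gnedin+Hansen+Pitman:2007} to pass from $\me\mathcal{K}_j^*(n)$ and $\me\mathcal{K}_{2j}^*(2n)$ to their Poisson counterparts, and with the identity \eqref{eq:equation_var}, yields $I_n=\me\mathcal{K}_j^*(n)-\sum_k\phi_k(n)^2=\var K_j^*(n)+o(\var K_j^*(n))$, since $\me K_j^*(n)$ and $\var K_j^*(n)$ diverge in every regime (Lemmas~\ref{lem:old}, \ref{var}) and absorb the $o(1)$ errors.

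The main work lies in bounding $J_n$. A Taylor expansion of $\log\bigl(q_{kl}(n)/(\phi_k(n)\phi_l(n))\bigr)$ based on $\log(1-p_k-p_l)=\log(1-p_k)+\log(1-p_l)-p_kp_l+O(p_k^2p_l+p_kp_l^2)$ and $\binom{n-j}{j}/\binom{n}{j}=1+O(1/n)$ gives, uniformly over pairs with $p_k+p_l$ bounded away from $1$,
\begin{equation*}
q_{kl}(n)=\phi_k(n)\phi_l(n)\bigl(1+O(1/n+p_k+p_l+np_kp_l)\bigr).
\end{equation*}
Summing the three correction terms and using the identity $\sum_kp_k\phi_k(n)=\frac{j+1}{n+1}\me\mathcal{K}_{j+1}^*(n+1)$ together with $\me\mathcal{K}_{j+1}^*(n+1)=O(\me\mathcal{K}_j^*(n))$ gives $|J_n|=O\bigl((\me\mathcal{K}_j^*(n))^2/n\bigr)$. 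Contributions from atypical pairs where $\max(p_k,p_l)\gtrsim (\log n)/n$ are negligible thanks to the super-polynomial decay $\phi_k(n)\leq(np_k)^je^{-np_k}(1+o(1))$, so the standard Karlin split of the index set into ``small'' and ``large'' probabilities makes the bound rigorous.

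Finally I would verify $(\me K_j^*(n))^2/n=o(\var K_j^*(n))$ in each regime using the asymptotics supplied by Lemmas~\ref{lem:old} and~\ref{var}: the ratio is of order $\ell(n)/n$ for $\rho\in\Pi_{\ell,\,\infty}$, of order $n^{\alpha-1}L(n)$ for $\alpha\in(0,1)$, of order $L(n)$ for $\alpha=1$ with $j\ge 2$, and of order $\hat L(n)$ for $\alpha=1$ with $j=1$; all four tend to zero. The first two cases are immediate, while the last two rely on the summability constraint $\sum_kp_k=1$ forcing $L(t)\to 0$ (Lemma~3 of \cite{Karlin:1967}), which when $\hat L$ is well-defined also yields $\hat L(t)\to 0$ (Proposition~1.5.9b in \cite{Bingham+Goldie+Teugels:1989}). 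The main obstacle throughout is ensuring uniformity of the Taylor expansion in the off-diagonal step across the range of $p_k$ that matters for the variance; this is handled by the Karlin-style split mentioned above, treating the ``large-$p_k$'' contribution separately via the exponential factor $e^{-np_k}$.
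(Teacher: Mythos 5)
Your proposal is correct and follows the same overall strategy as the paper: split $\var \mathcal{K}_j^*(n)$ into the diagonal sum $\sum_k\phi_k(n)(1-\phi_k(n))$ and the off-diagonal covariance sum, show the former is asymptotically $\var K_j^*(n)$ and the latter is $O((\me K_j^*(n))^2/n)=o(\var K_j^*(n))$, the last step resting on the fact that mean and variance of $K_j^*(n)$ are comparable in every regime and that $\var K_j^*(n)/n\to 0$, which is exactly \eqref{eq:zero_var}. The implementations differ in minor but real ways. For the diagonal part the paper compares each binomial probability with its Poisson counterpart through the pointwise bound $\big|\binom{n}{j}p_k^j(1-p_k)^{n-j}-\eee^{-p_kn}(p_kn)^j/j!\big|\le C p_k$ (from Lemma 2.13 of \cite{Iksanov+Kotelnikova:2022}) and sums over $k$, whereas you use the exact identity $\sum_k\phi_k(n)^2=\binom{n}{j}^2\binom{2n}{2j}^{-1}\me\mathcal{K}_{2j}^*(2n)$ together with \eqref{eq:mean_depois} and \eqref{eq:equation_var}; both work, and yours is arguably the slicker route. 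For the off-diagonal part the paper expands $(1-p_i-p_k)^{n-2j}$ and splits the covariance into the explicit pieces $F_j$, $G_j$, $C_j^{(2)}$, each controlled via Lemma \ref{lem:series}; your logarithmic expansion with multiplicative error $1+O(1/n+p_k+p_l+np_kp_l)$, combined with the identity $\sum_k p_k\phi_k(n)=\frac{j+1}{n+1}\me\mathcal{K}_{j+1}^*(n+1)$ and comparability of the means, is the same set of estimates in different packaging (your three correction terms correspond precisely to $C_j^{(2)}$, $F_j$ and $G_j$). One small caution: the only pairs that genuinely need to be excluded from your uniform expansion are those involving one of the finitely many indices with $p_k$ bounded away from $0$ (so that $(1-p_k)(1-p_l)$ stays away from $0$), and these are trivially negligible since $\phi_k(n)$ then decays exponentially in $n$; the additional claim that all pairs with $\max(p_k,p_l)\gtrsim(\log n)/n$ may be discarded is unnecessary and, as stated, loosely justified (the per-box decay at that threshold is only polynomial while the number of such boxes can be of order nearly $n/\log n$), so it should be dropped or replaced by the finite-set exclusion just described.
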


The proof of Proposition \ref{prop:var_depoiss} is partly based on a lemma which is a slight extension of Lemma 6.9 in \cite{Buraczewski+Iksanov+Kotelnikova:2023+}. The new aspect of the lemma is that unlike the cited result it covers the case where $j=1$ and $l\geq 1$ simultaneously.
\begin{lemma}\label{lem:series}
	Assume that either $\rho\in \Pi_{\ell,\,\infty}$ or $\rho$ is regularly varying at $\infty$ of index $\alpha\in(0,1]$.
	Then for $l\ge j$, $l,j\in\mn$,
	\begin{equation}\label{eq:series}
		\sum_{k\ge 1}\binom{n}{l}p_k^l(1-p_k)^{n}=O(\var K_j^*(n)),\quad n\to\infty.
	\end{equation}
	\begin{proof}
		According to the last formula in the proof of Lemma 6.9 in \cite{Buraczewski+Iksanov+Kotelnikova:2023+},
		$$\sum_{k\ge 1}\binom{n}{l}p_k^l(1-p_k)^{n}~\sim~\me K_{l}^*(n+l),\quad n\to\infty.$$
		According to formulae \eqref{eq:meK0}, \eqref{eq:meKgeneral}, \eqref{eq:meK1} and \eqref{eq:exo}, the function $t\mapsto\me K_j^*(t)$ is regularly varying at $\infty$ of index $\alpha\in [0,1]$. This entails $\me K_{l}^*(n+l)\sim\me K_l^*(n)$ as $n\to\infty$.
		
		If $\rho\in \Pi_{\ell,\,\infty}$ or $\rho$ is regularly varying at $\infty$ of index $\alpha\in(0,1)$, then \eqref{eq:series} is a consequence of \eqref{eq:meK0} and \eqref{eq:varK0_1} or \eqref{eq:meKgeneral} and \eqref{eq:varKgeneral}. If $\rho$ is regularly varying at $\infty$ of index $\alpha=1$ and either $j,l\geq 2$ or $j=l=1$, then \eqref{eq:series} follows from~\eqref{eq:meK1} and \eqref{eq:cj1} or \eqref{eq:exo}, respectively. Finally, under the latter regular variation assumption, if $j=1$ and $l\geq 2$, then \eqref{eq:series}, with $o$ replacing $O$, holds true according to~\eqref{eq:meK1},~\eqref{eq:exo} and \eqref{eq:Lhat}.
	\end{proof}
\end{lemma}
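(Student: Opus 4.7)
The plan is to reduce the sum in \eqref{eq:series} to the mean $\me K_l^*(n)$ and then to compare $\me K_l^*(n)$ with $\var K_j^*(n)$ by reading off the explicit asymptotics already collected in Theorems \ref{thm:Karlin0}, \ref{thm:Karlin}, \ref{thm:Karlin1} and Lemma \ref{var}.

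First, I would invoke the asymptotic
\[
\sum_{k\ge 1}\binom{n}{l}p_k^l(1-p_k)^n \;\sim\; \me K_l^*(n+l),\quad n\to\infty,
\]
which is extracted from the proof of Lemma 6.9 in \cite{Buraczewski+Iksanov+Kotelnikova:2023+}. Morally, $\binom{n}{l}p_k^l(1-p_k)^n$ differs from the deterministic probability that box $k$ contains exactly $l$ balls when $n+l$ balls have been thrown only by the harmless ratio $\binom{n}{l}/\binom{n+l}{l}\to 1$, and the de-Poissonization relation \eqref{eq:mean_depois} then transfers us from $\mathcal{K}_l^*$ to $K_l^*$. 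Next, I would replace $\me K_l^*(n+l)$ by $\me K_l^*(n)$ at no asymptotic cost: by \eqref{eq:meK0}, \eqref{eq:meKgeneral}, \eqref{eq:meK1} and \eqref{eq:exo}, the map $t\mapsto\me K_l^*(t)$ is regularly varying at infinity of some index $\alpha\in[0,1]$ under each of our standing hypotheses, hence stable under a bounded translation of the argument. After these two reductions the lemma amounts to the pointwise estimate $\me K_l^*(n)=O(\var K_j^*(n))$ as $n\to\infty$.

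The remainder is a case split in which both sides are read off from the asymptotic formulae. If $\rho\in\Pi_{\ell,\,\infty}$, both $\me K_l^*(n)$ and $\var K_j^*(n)$ are positive constants times $\ell(n)$, by \eqref{eq:meK0} and \eqref{eq:varK0_1}. If $\alpha\in(0,1)$, both are positive constants times $n^\alpha L(n)$, by \eqref{eq:meKgeneral} and \eqref{eq:varKgeneral}. If $\alpha=1$ with $j,l\ge 2$, both are positive constants times $nL(n)$, by \eqref{eq:meK1} and \eqref{eq:cj1}. If $\alpha=1$ with $j=l=1$, both are asymptotic to $n\hat L(n)$, by \eqref{eq:exo}. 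In each of these four situations the ratio is bounded, so the $O$-bound in \eqref{eq:series} drops out immediately from matching the leading-order constants.

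The main obstacle is the residual subcase $\alpha=1$ with $j=1$ and $l\ge 2$: then $\me K_l^*(n)$ is of order $nL(n)$ by \eqref{eq:meK1}, whereas $\var K_1^*(n)$ is only of order $n\hat L(n)$ by \eqref{eq:exo}, and the two orders do not automatically coincide. This is precisely where \eqref{eq:Lhat} saves the day: it says $L(t)/\hat L(t)\to 0$ as $t\to\infty$, so in fact $\me K_l^*(n)/\var K_1^*(n)\to 0$, which not only gives the $O$-bound but upgrades it to $o$. This is the one step that does not follow purely from matching leading-order constants, and it is the place where the interplay between $L$ and its integrated companion $\hat L$ is genuinely used.
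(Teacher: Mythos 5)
Your proposal is correct and follows essentially the same route as the paper: reduce the sum to $\me K_l^*(n+l)\sim\me K_l^*(n)$ via the cited asymptotic and regular variation, then compare with $\var K_j^*(n)$ case by case, with the only delicate subcase being $\alpha=1$, $j=1$, $l\ge 2$, where \eqref{eq:Lhat} yields the (even stronger) $o$-bound exactly as in the paper.
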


\begin{proof}[Proof of Proposition \ref{prop:var_depoiss}]
	
	We start by noting that, in view of
	\eqref{eq:var0}, \eqref{eq:var} or~\eqref{eq:var1}, for
	$j\in\mn$,
	\begin{equation}\label{eq:zero_var}
		\lim_{t\to\infty}\frac{\var K^*_j(t)}{t}=0.
	\end{equation}
	In the case $\alpha=1$ this is secured by $\lim_{t\to\infty}\hat L(t)=0$, which follows from the definition of~$\hat L$.
	
	For $k,j,n\in\mn$, the event $\{\text{the box}~ k~\text{contains exactly}~j~ \text{balls out of}~ n\}$ will be denoted by $A_k(j,n)$.
	Then
	\begin{multline*}
		\var \mathcal{K}_j^*(n)=\sum_{k\ge 1}\mmp(A_k(j,n))(1-\mmp(A_k(j,n)))\\+\sum_{i\neq k}(\mmp(A_i(j,n)\cap A_k(j,n))-\mmp(A_i(j,n))\mmp(A_k(j,n))).
	\end{multline*}
	It is enough to prove that
	\begin{equation}\label{eq:variance}
		\lim_{n\to\infty} \frac{\sum_{k\ge 1}\mmp(A_k(j,n))(1-\mmp(A_k(j,n)))-\var K^*_j(n)}{\var K^*_j(n)}=0
	\end{equation}
	and
	\begin{equation}\label{eq:12}
		\lim_{n\to\infty}\frac{\sum_{i\neq k}(\mmp(A_i(j,n)\cap A_k(j,n))-\mmp(A_i(j,n))\mmp(A_k(j,n)))}{\var K^*_j(n)}=0.
	\end{equation}
	
	\noindent {\sc Proof of \eqref{eq:variance}}. For $k,j,n\in\mn$,
	$$\mmp(A_k(j,n))=\binom{n}{j}p_k^j(1-p_k)^{n-j}.$$ In view of this and \eqref{eq:small_var}, the numerator in \eqref{eq:variance} is equal to
	\begin{equation*}
		\sum_{k\ge 1}\Big(\binom{n}{j}p_k^j(1-p_k)^{n-j}-\eee^{-p_kn}\frac{(p_kn)^j}{j!}-\Big(\binom{n}{j}p_k^{j}(1-p_k)^{n-j}\Big)^2-\Big(\eee^{-p_kn}\frac{(p_kn)^{j}}{j!}\Big)^2\Big).
	\end{equation*}
	According to the penultimate inequality in the proof of Lemma 2.13 in \cite{Iksanov+Kotelnikova:2022}, for large enough~$n$ and any $j\le n$,
	\begin{equation*}\label{eq:ineq1}
		-B_jp_k\le \binom{n}{i}p_k^j(1-p_k)^{n-j}-\eee^{-p_kn}\frac{(p_kn)^j}{j!}\le A_jp_k
	\end{equation*}
	for some positive constants $A_j$ and $B_j$. Therefore,
	$$
	\sum_{k\ge 1} \Big|\binom{n}{j}p_k^j(1-p_k)^{n-j}-\eee^{-p_kn}\frac{(p_kn)^j}{j!}\Big|\le \max(A_j,B_j)=o(\var K^*_j(n)),
	$$
	as $n\to\infty$, since under our assumptions $\lim_{n\to\infty}\var K^*_j(n)=\infty$. Further, write
	\begin{multline*}
		\sum_{k\ge 1} \Big|\Big(\binom{n}{j}p_k^j(1-p_k)^{n-j}\Big)^2-\Big(\eee^{-p_kn}\frac{(p_kn)^j}{j!}\Big)^2\Big|\\=\sum_{k\ge 1} \Big|\Big(\binom{n}{j}p_k^j(1-p_k)^{n-j}-\eee^{-p_kn}\frac{(p_kn)^j}{j!}\Big)\Big|\Big(\binom{n}{j}p_k^j(1-p_k)^{n-j}+\eee^{-p_kn}\frac{(p_kn)^j}{j!}\Big)\\\le 2\sum_{k\ge 1} \Big|\binom{n}{j}p_k^j(1-p_k)^{n-j}-\eee^{-p_kn}\frac{(p_kn)^j}{j!}\Big|=o(\var K^*_j(n)),\quad n\to\infty.
	\end{multline*}
	The proof of \eqref{eq:variance} is complete.
	
	\noindent {\sc Proof of \eqref{eq:12}}. For $k,i,j,n\in\mn$,
	\begin{multline*}
		\mmp(A_i(j,n)\cap A_k(j,n))-\mmp(A_i(j,n))\mmp(A_k(j,n))\\=\binom{n}{j}\binom{n-j}{j}p_i^jp_k^j(1-p_i-p_k)^{n-2j}-\binom{n}{j}\binom{n}{j}p_i^jp_k^j(1-p_i)^{n-j}(1-p_k)^{n-j}\\
		=:C_j(i,k,n).
	\end{multline*}
	We shall use an appropriate decomposition of $C_j$
	\begin{multline*}
		C_j(i,k,n)=\binom{n}{j}\binom{n-j}{j}p_i^jp_k^j\Big((1-p_i-p_k)^{n-2j}-(1-p_i)^{n-j}(1-p_k)^{n-j}\Big)\\-\binom{n}{j}\Big(\binom{n}{j}-\binom{n-j}{j}\Big)p_i^jp_k^j(1-p_i)^{n-j}(1-p_k)^{n-j}=:
		C^{(1)}_j(i,k,n)+C^{(2)}_j(i,k,n).
	\end{multline*}
	To analyze $C^{(1)}_j$ we argue as in the proof of Lemma 1 on p.~152 in \cite{Gnedin+Hansen+Pitman:2007}. Invoking an expansion $$(x-y)^m=x^m+O(mx^{m-1}y),\quad m\to\infty,$$ which holds for positive $x$ and $y$, $x>y$, with $x=(1-p_i)(1-p_k)$, $y=p_ip_k$ and $m=n-2j$, we infer
	\begin{multline*}
		C^{(1)}_{j}(i,k,n)=\binom{n}{j}\binom{n-j}{j}p_i^jp_k^j\Big((1-p_i)^{n-2j}(1-p_k)^{n-2j}\Big(1-(1-p_i)^{j}(1-p_k)^{j}
		\Big)\\+O((n-2j)p_ip_k(1-p_i)^{n-2j-1}(1-p_k)^{n-2j-1})\Big)=: F_j(i,k,n)+G_j(i,k,n).
	\end{multline*}
	
	Next, we intend to show that the contributions of $F_j(i,k,n)$, $G_j(i,k,n)$ and $C^{(2)}_j(i,k,n)$ to the sum are negligible in comparison to
	$\var K^*_j(n)$ as $n\to\infty$.
	
	\noindent {\sc Analysis of $G_j$}.
	With Lemma \ref{lem:series} at hand, we obtain
	\begin{multline*}
		\sum_{i\ne k}\binom{n}{j}\binom{n-j}{j}(n-2j)p_i^{j+1}p_k^{j+1}(1-p_i)^{n-2j-1}(1-p_k)^{n-2j-1}\\\le \sum_{i\ge 1}n\binom{n}{j}p_i^{j+1}(1-p_i)^{n-2j-1}\sum_{k\ge 1}
		\binom{n-j}{j}p_k^{j+1}(1-p_k)^{n-2j-1}\\= O\Big(\var K^*_j(n)\frac{\var K^*_j(n)}{n}\Big)=o(\var K^*_j(n)),\quad n\to\infty
	\end{multline*}
	having utilized \eqref{eq:zero_var} for the last limit relation.
	
	\noindent {\sc Analysis of $F_j$}. For $m\in\mn$ and $x\in [0,1]$, $1-x^m\leq m(1-x)$. Using this with $m=j$ and $x=(1-p_i)(1-p_k)$ we conclude that
	$1-(1-p_i)^j(1-p_k)^j\leq j(p_i+p_k-p_ip_k)\leq j(p_i+p_k)$ and thereupon
	\begin{multline*}
		F_j(i,k,n)\le j\binom{n}{j}\binom{n-j}{j}p_i^jp_k^j(1-p_i)^{n-2j}(1-p_k)^{n-2j}(p_i+p_k)\\
		=:F^{(1)}_j(i,k,n)+F^{(2)}_j(i,k,n).
	\end{multline*}
	Further, invoking Lemma \ref{lem:series} yields
	\begin{multline*}
		0\le\sum_{i\neq k}F^{(1)}_j(i,k,n)\le j\sum_{i\ge 1} \binom{n}{j}p_i^{j+1}(1-p_i)^{n-2j}\sum_{k\ge1}\binom{n-j}{j}p_k^j(1-p_k)^{n-2j}\\=O\Big(\frac{\var{K}^*_{j}(n)}{n}\var{K}^*_j(n)\Big)=o(\var K^*_j(n)),\quad n\to\infty.
	\end{multline*}
	Here, the latter asymptotic relation is a consequence of  \eqref{eq:zero_var}.
	
	The argument for $F_j^{(2)}$ is analogous, and we omit details.
	
	\noindent {\sc Analysis of $C^{(2)}_j$}. Notice that $\binom{n}{j}-\binom{n-j}{j}=O(n^{j-1})$ as $n\to\infty$. Hence, mimicking the argument used for the
	analysis of $F_j^{(1)}$ we conclude that
	\begin{multline*}
		\sum_{i\ne k} |C^{(2)}_j(i,k,n)|
		=O\Big(\sum_{i\ne k}n^{2j-1}p_i^jp_k^j(1-p_i)^{n-j}(1-p_k)^{n-j}\Big)\\=O\Big( \sum_{i\ge 1}n^jp_i^j(1-p_i)^{n-j}\sum_{k\ge 1}n^{j-1}p_k^j(1-p_k)^{n-j}\Big)=O\Big(\var K^*_j(n)\frac{\var K^*_j(n)}{n}\Big)\\=o(\var K_j(n)), \quad n\to\infty.
	\end{multline*}
	
	Combining all the fragments together we arrive at \eqref{eq:12}.
\end{proof}

With Proposition \ref{prop:var_depoiss} at hand, we are ready to prove the LIL stated in Theorem \ref{thm:depoiss}. We argue
along the lines of the proof of Theorem 3.7 in \cite{Buraczewski+Iksanov+Kotelnikova:2023+}.

\begin{proof}[Proof of Theorem \ref{thm:depoiss}.]
	The deterministic and Poissonized schemes discussed in Section~\ref{karlin} are not necessarily defined on a common probability space. Our plan is to deduce LILs for $\mathcal{K}_j^\ast(n)$ from the corresponding LILs for $K_j^\ast(t)$. To this end, we need to {\it couple} the two schemes. Let $X_1$, $X_2,\ldots$ be independent random variables with distribution $(p_k)_{k\in\mn}$, which are independent of a Poisson process $\pi$ and particularly its arrival sequence $(S_n)_{n\in\mn}$. For all $j,n\in\mn$ and $t\geq 0$, we define coupled versions of $\mathcal{K}_j(n)$, $\mathcal{K}_j^*(n)$, $K_j(t)$ and $K_j^\ast(t)$ as follows, keeping the notation for the variables unchanged:
	$$
		\mathcal{K}_j(n)=\#~\text{of distinct values that the variables}~ X_1, X_2, \ldots, X_n
		\text{take at least $j$ times},
	$$
	$$
		\mathcal{K}_j^*(n)=\#~\text{of distinct values that the variables}~ X_1, X_2, \ldots, X_n
		\text{take exactly $j$ times},
	$$
	$$
		K_j(t)=\#~\text{of distinct values that the variables}~ X_1, X_2, \ldots, X_{\pi(t)}
		\text{take at least $j$ times},
	$$
	$$
		K_j^*(t)=\#~\text{of distinct values that the variables}~ X_1, X_2, \ldots, X_{\pi(t)}
		\text{take exactly $j$ times},
	$$
	To justify the construction, observe that the variable $X_i$ can be thought of as the index of a box hit by the $i$th ball. The most important conclusion of the preceding discussion is that, for all $j,n\in\mn$, $\mathcal{K}_j^*(n)=K_{j}^*(S_n)$ a.s. (for the coupled variables).
	
	We prove the result in several steps.
	
	\noindent {\sc Step 1}.
	According to Step 2 of the proof of Theorem 3.7 in \cite{Buraczewski+Iksanov+Kotelnikova:2023+},
	$$\lim_{n\to\infty}\frac{K_{j}(S_n)-K_j(n)}{(\var K_j(n)m(\var K_j(n)))^{1/2}}=0\quad\text{a.s.},$$
	where $m(t)=\log t$ under \eqref{eq:deHaan} and \eqref{eq:slowly} and $m(t)=\log\log t$ under the other assumptions of Theorem \ref{thm:depoiss}.
	
	By Lemmas \ref{lem:old} and \ref{var},
	for $j\in\mn$, $\var K^*_j(t)$ and $\var K_j(t)$ are asymptotically equivalent up to a constant, whence
	$$\lim_{n\to\infty}\frac{K_{j}(S_n)-K_j(n)}{(\var K^*_j(n)m(\var K^*_j(n)))^{1/2}}=0\quad\text{a.s.}$$
	By Lemma \ref{var}, for $j\in\mn$, $\var K^\ast_{j+1}(t)$ and $\var K^*_j(t)$ are asymptotically equivalent up to a constant, unless $\alpha=j=1$. In the latter case, invoking in addition~\eqref{eq:Lhat} we obtain $\var K^*_{j+1}(t)=o(\var K^*_j(t))$ as $t\to\infty$. This in combination with the last centered limit relation, in which we replace $j$ with $j+1$, yields
	$$\lim_{n\to\infty}\frac{K_{j+1}(S_n)-K_{j+1}(n)}{(\var K^*_j(n)m(\var K^*_j(n)))^{1/2}}=0\quad\text{a.s.}$$ Since, for
	$j\in\mn$, $K^*_j(t)=K_j(t)-K_{j+1}(t)$ a.s., subtracting the last two centered limit relations we arrive at
	$$\lim_{n\to\infty}\frac{K_{j}^*(S_n)-K_j^*(n)}{(\var K_j^*(n)m(\var K_j^*(n)))^{1/2}}=0\quad\text{a.s.}$$

	\noindent {\sc Step 2.} Halves of LILs \eqref{eq:LILkar}, \eqref{eq:LILkar1}, \eqref{eq:LILkar2} and \eqref{eq:LILkar200} (\eqref{eq:inf01}, \eqref{eq:inf02}, \eqref{eq:inf1} and \eqref{eq:inf11})
	read
	$$
	\limsup_{n\to\infty}(\liminf_{n\to\infty})\frac{K_j^*(n)-\me K_j^*(n)}{(\var K_j^*(n)m(\var K_j^*(n)))^{1/2}}\le C~ (\ge -C) \quad\text{a.s.},
	$$
	where the case-dependent constant $C$
	is equal to the right-hand side of \eqref{eq:LILkar}, \eqref{eq:LILkar1}, \eqref{eq:LILkar2} or~\eqref{eq:LILkar200} (\eqref{eq:inf01}, \eqref{eq:inf02}, \eqref{eq:inf1} or \eqref{eq:inf11}) , respectively.
	This taken together with the conclusion of Step 1, formula~\eqref{eq:mean_depois} and Proposition \ref{prop:var_depoiss} enables us to obtain
	$$\limsup_{n\to\infty} (\liminf_{n\to\infty}) \frac{\mathcal{K}^\ast_j(n)-\me \mathcal{K}^\ast_j(n)}{(\var \mathcal{K}^\ast_j(n)m(\var \mathcal{K}^\ast_j(n)))^{1/2}}\le C~(\ge -C)\quad \text{a.s.}$$ Here, we have used a decomposition $$\mathcal{K}^\ast_j(n)-\me \mathcal{K}^\ast_j(n)=(K_{j}^*(S_n)-K_j^*(n))+(K_j^\ast(n)-\me K_j^\ast(n))+(\me K_j^\ast(n)-\me \mathcal{K}^\ast_j(n))$$
	a.s. This finishes the proof of $$\limsup_{t\to\infty} (\liminf_{n\to\infty})\frac{\mathcal{K}^*_1(n)-\me \mathcal{K}^*_1(n)}{({\rm Var}\,\mathcal{K}^*_1(n)\log\log {\rm Var}\,\mathcal{K}^*_1(n))^{1/2}}\leq 2^{1/2}~(\geq -2^{1/2})\quad\text{{\rm a.s.}}$$
	in the situation that $\alpha=1$ and relation \eqref{eq:exotic} fails to hold.

	According to Lemma \ref{lem:new},
	for any $\delta>0$ and the deterministic sequence $(\tau_n)$ defined in~\eqref{eq:tau},
	$$\limsup_{n\to\infty} (\liminf_{n\to\infty})\frac{K^*_{j}(\lfloor \tau_n\rfloor)-\me K^*_{j}(\lfloor\tau_n\rfloor)}{C(\var K^*_{j}(\lfloor \tau_n\rfloor)m(\var K^*_j(\lfloor \tau_n\rfloor)))^{1/2}}\ge 1-\delta~(\le -(1-\delta))\quad\text{a.s.}$$
	Combining these inequalities
	with the conclusion of Step 1, formula~\eqref{eq:mean_depois} and Proposition~\ref{prop:var_depoiss} we arrive at
	\begin{multline*}
		\limsup_{n\to\infty}(\liminf_{n\to\infty})\frac{\mathcal{K}^*_{j}(n)-\me \mathcal{K}^*_{j}(n)}{C(\var \mathcal{K}^*_{j}(n)m(\var \mathcal{K}^*_j(n)))^{1/2}}\\\geq (\leq) \limsup_{n\to\infty} (\liminf_{n\to\infty})\frac{\mathcal{K}^*_{j}(\lfloor \tau_n\rfloor)-\me \mathcal{K}^*_{j}(\lfloor \tau_n\rfloor)}{C(\var \mathcal{K}^*_{j}(\lfloor \tau_n\rfloor)m(\var \mathcal{K}^*_j(\lfloor \tau_n\rfloor)))^{1/2}} \ge 1-\delta ~ (\le -(1-\delta))\quad \text{a.s.}
	\end{multline*}
	Sending $\delta\to 0+$ yields
	$$\limsup_{n\to\infty}(\liminf_{n\to\infty})\frac{\mathcal{K}^*_{j}(n)-\me \mathcal{K}^*_{j}(n)}{(\var \mathcal{K}^*_{j}(n)m(\var \mathcal{K}^*_j(n)))^{1/2}}\geq C~ (\le -C)\quad\text{a.s.},$$
	which finishes the proof.
\end{proof}

\noindent {\bf Acknowledgement}. The research was supported by Applied Probability Trust in the framework of a Ukraine Support Scheme.

\end{document}